\documentclass[10pt]{article} 
\usepackage[accepted]{tmlr}

\usepackage{amsmath,amsfonts,bm}









\def\eqref#1{equation~\ref{#1}}









\def\1{\bm{1}}










\DeclareMathAlphabet{\mathsfit}{\encodingdefault}{\sfdefault}{m}{sl}
\SetMathAlphabet{\mathsfit}{bold}{\encodingdefault}{\sfdefault}{bx}{n}













\usepackage{booktabs} 
\usepackage{wrapfig}
\usepackage{caption}
\usepackage{subcaption}
\usepackage{enumitem}
\usepackage{hyperref}
\usepackage{url}
\usepackage{amssymb}
\usepackage{graphicx}
\usepackage{thmtools} 
\usepackage{thm-restate}

\newtheorem{remark}{Remark}
\newtheorem{lemma}{Lemma}
\newtheorem{assumption}{Assumption}

\newtheorem{proof}{Proof}

\title{Adaptive Gradient Normalization and Independent Sampling for (Stochastic) Generalized-Smooth Optimization}


\author{\name Yufeng Yang \email yufeng.yang@tamu.edu \\
      \addr Department of Computer Science and Engineering\\
      \addr Texas A\& M University\\
      \addr College Station, TX 77843, USA
      \AND
      \name Erin E. Tripp \email etripp@hamilton.edu \\
      \addr Mathematics and Statistics Department\\
      \addr Hamilton College\\
      \addr Clinton, NY 13323, USA
      \AND
      \name Yifan Sun \email ysun@cs.stonybrook.edu\\
      \addr Department of Computer Science\\
      \addr Stony Brook University \\
      \addr Stony Brook, NY 11794, USA
      \AND
      \name Shaofeng Zou \email 
      zou@asu.edu  \\
      \addr School of Electrical, Computer and Energy Engineering \\
      \addr Arizona State University \\
      \addr Tempe, AZ 85287, USA
      \AND
      \name Yi Zhou \email yi.zhou@tamu.edu \\
      \addr Department of Computer Science and Engineering\\
      \addr Texas A\&M University\\
      \addr College Station, TX 77843, USA
      }



\begin{document}

\maketitle

\begin{abstract}
Recent studies have shown that many nonconvex machine learning problems satisfy a generalized-smooth condition that extends beyond traditional smooth nonconvex optimization. However, the existing algorithms are not fully adapted to such generalized-smooth nonconvex geometry and encounter significant technical limitations on their convergence analysis. In this work, we first analyze the convergence of adaptively normalized gradient descent under function geometries characterized by generalized-smoothness and generalized P{\L} condition, revealing the advantage of adaptive gradient normalization. Our results provide theoretical insights into adaptive normalization across various scenarios.
For stochastic generalized-smooth nonconvex optimization, we propose \textbf{I}ndependent-\textbf{A}daptively \textbf{N}ormalized \textbf{S}tochastic \textbf{G}radient \textbf{D}escent algorithm, which leverages adaptive gradient normalization, independent sampling, and gradient clipping to achieve an $\mathcal{O}(\epsilon^{-4})$ sample complexity under relaxed noise assumptions. Experiments\footnote{Code available at \href{https://github.com/ynyang94/Gensmooth-IAN-SGD}{github.com/ynyang94/Gensmooth-IAN-SGD}} on large-scale nonconvex generalized-smooth problems demonstrate the fast convergence of our algorithm.
\end{abstract}

\section{Introduction}
In modern machine learning, the convergence of gradient-based optimization algorithms has been well studied in the standard smooth nonconvex setting.
However, it has been shown recently that the standard $L$-smoothness fails to hold in many nonconvex machine learning problems, including distributionally-robust optimization(DRO) \citep{levy2020large,jin2021nonconvex}, meta-learning \citep{firstordermeta,metagensmooth} and language models \citep{liu2021pretrain,zhang2020gradientclippingacceleratestraining}. Instead, these problems were shown to satisfy a so-called {\em generalized-smooth} condition \citep{zhang2020gradientclippingacceleratestraining}, in which the smoothness parameter can scale with the
gradient norm in the optimization process.

In the existing literature, various works have proposed different algorithms for solving generalized-smooth nonconvex optimization problems. Specifically, \citet{ li2023convexnonconvexoptimizationgeneralized,zhang2020gradientclippingacceleratestraining,pmlr-v202-chen23ar,petergensmooth,L0L1newstudy,reisizadeh2023variancereducedclippingnonconvexoptimization} have demonstrated that {\em deterministic} first-order algorithms such as gradient descent, normalized gradient descent and clipped gradient descent can achieve $\mathcal{O}(\epsilon^{-2})$ iteration complexity under mild assumptions. These complexity results match the lower bound obtained by classical first-order methods \citep{lowerbound}. 
In particular, \citet{pmlr-v202-chen23ar} empirically demonstrated that proper usage of adaptive gradient normalization can substantially accelerate convergence. However, the formal theoretical justification and understanding of adaptive gradient normalization is still lacking for first-order algorithms in generalized-smooth optimization.


On the other hand, some other works, including \citet{li2023convexnonconvexoptimizationgeneralized,zhang2020gradientclippingacceleratestraining,genralclip} studied first-order {\em stochastic} algorithms in generalized-smooth nonconvex optimization.
Specifically, one line of works \citep{li2023convexnonconvexoptimizationgeneralized} focused on the classic stochastic gradient descent (SGD) algorithm \citep{ghadimi2013stochastic}. However, in generalized-smooth setting, the convergence analysis of SGD either relies on adopting very large batch size or involves large constants \citep{li2023convexnonconvexoptimizationgeneralized}. These theoretical bottlenecks may be a key factor limiting the practical performance of SGD. Empirical observations from \citet{pmlr-v202-chen23ar} indicate that SGD often converges slowly due to ill-conditioned smoothness parameters when the gradient norm is large.
Another line of works \citep{zhang2020gradientclippingacceleratestraining, genralclip,reisizadeh2023variancereducedclippingnonconvexoptimization, usitchclip} focused on clipped SGD, which leverages gradient normalization and clipping to handle the generalized-smooth geometry. Although clipped SGD has demonstrated superior performance in solving large-scale problems,
the existing theoretical analysis has several limitations. First, to establish convergence guarantees in the stochastic setting, existing studies often rely on strong assumptions, such as the stochastic approximation error being almost surely bounded or the use of extremely large batch sizes.
Second, the existing designs of clipped SGD adopt the standard stochastic gradient normalization scheme, which is not fully adapted to the function geometry characterized by the generalized-smooth condition.

Having observed the algorithmic and theoretical limitations discussed above, we aim to advance the algorithm design and analysis for generalized-smooth optimization through investigating the following two fundamental and complementary questions. 
\begin{itemize}[leftmargin = *]
    \item \textit{Q1: In deterministic generalized-smooth optimization, how does adaptive gradient normalization affect the convergence rate of first-order algorithm, e.g., under Polyak-{\L}ojasiewicz type conditions?
    }
    \item \textit{Q2: In stochastic generalized-smooth optimization, can we design a novel algorithm that guarantees convergence under relaxed noise assumptions while relying on only a small number of samples for computing each stochastic gradient}?
    
\end{itemize}



In this work, we provide comprehensive answers to both questions by developing new algorithms and convergence analysis in generalized-smooth nonconvex optimization.
We summarize our contributions as follows.
\subsection{Our Contributions}
    To understand the advantage of using adaptive gradient normalization, we first study the convergence rate of adaptive normalized gradient descent (AN-GD) in deterministic generalized-smooth optimization under the generalized Polyak-{\L}ojasiewicz (P{\L}) condition over a broad spectrum of gradient normalization parameters. Our results reveal the interplay among learning rate, gradient normalization parameter and function geometry parameter, and characterize their impact on the type of convergence rate.
    In particular, our results reveal the advantage of using adaptive gradient normalization and provide theoretical guidance on choosing proper gradient normalization parameter to improve convergence rate.
    
    
    We further propose a novel Independent-Adaptively Normalized Stochastic Gradient Descent (IAN-SGD) algorithm tailored for stochastic generalized-smooth nonconvex optimization. Specifically, IAN-SGD leverages normalized gradient updates with independent sampling and gradient clipping to reduce bias and enhance algorithm stability. Consequently, we are able to establish convergence of IAN-SGD with $\mathcal{O}(\epsilon^{-4})$ sample complexity under a relaxed assumption on the approximation error of stochastic gradient and constant-level batch size. This makes the algorithm well-suited for solving large-scale problems.
    
    We compare the numerical performance of our IAN-SGD algorithm with other state-of-the-art stochastic algorithms in applications of nonconvex phase retrieval, distributionally-robust optimization and training deep neural networks, all of which are generalized-smooth nonconvex problems. Our results demonstrate the efficiency of IAN-SGD in solving generalized-smooth nonconvex problems.

\section{Related Work}
\textbf{Generalized-Smoothness.}
The concept of generalized-smoothness was first introduced by \citet{zhang2020gradientclippingacceleratestraining} with the $(L_0, L_1)$-smooth condition, which allows a function to either have an affine-bounded Hessian norm or be locally $L$-smooth within a specific region. This definition was extended by \citet{pmlr-v202-chen23ar}, who proposed the $\mathcal{L}_{asym}^*(\alpha)$ and $\mathcal{L}_{sym}^*(\alpha)$ conditions, controlling gradient changes globally with both a constant term and a gradient-dependent term associated with power $\alpha$, thus applying more broadly. Later, \citet{li2023convexnonconvexoptimizationgeneralized} introduced $\ell$-smoothness, which use a non-decreasing sub-quadratic polynomial to control gradient differences. \citet{directionalsmoothness} proposed directional smoothness, which preserves $L$-smoothness along specific directions. 

\noindent\textbf{Algorithms for Generalized-Smooth Optimization.} Motivated by achieving comparable lower bounds presented in \citet{lowerbound} under standard assumptions,
algorithms for solving generalized-smooth problems can be categorized into two main series. The first series focuses on gradient descent methods with constant learning rate. \citet{li2023convexnonconvexoptimizationgeneralized} proved that GD, SGD converge with $\mathcal{O}(\epsilon^{-2})$ and $\mathcal{O}(\epsilon^{-4})$ complexity under generalized-smoothness. To ensure convergence, \citet{li2023convexnonconvexoptimizationgeneralized} adopted assumptions such as gradient upper-bound and bounded variance assumption.

Another series of work focuses on adaptive methods. In the nonconvex deterministic settings, \citet{zhang2020gradientclippingacceleratestraining,genralclip,petergensmooth,L0L1newstudy} showed that clipped GD can achieve an iteration complexity of $\mathcal{O}(\epsilon^{-2})$ under mild assumptions. Later, \citet{pmlr-v202-chen23ar} proposed $\beta$-GD that achieves $\mathcal{O}(\epsilon^{-2})$ iteration complexity. Specifically, in the convex setting, \citet{L0L1newstudy} analyzed clipped and normalized gradient descent under mild assumptions. \citet{petergensmooth} studied smoothed gradient clipping, gradient descent with Polyak step-size rule, and triangle method by varying the learning rates. Both works achieve the best-known $\mathcal{O}({\epsilon}^{-1})$ convergence rate for generalized-smooth convex optimization. In the nonconvex stochastic setting, \citet{zhangLsmoothclip} proved that clipped SGD achieves an $\mathcal{O}(\epsilon^{-4})$ sample complexity under the standard $L$-smoothness and bounded variance assumptions. Also, \citet{zhang2020gradientclippingacceleratestraining} established the same convergence rate for clipped SGD under generalized smoothness and almost surely bounded noise assumption. Later, \citet{reisizadeh2023variancereducedclippingnonconvexoptimization, usitchclip} analyzed the convergence of clipped SGD under the bounded variance assumption. Notably, \citet{usitchclip} pointed out that, with bounded variance assumption, clipped SGD converges to $\epsilon$-stationary point with sample complexity $\mathcal{O}(\epsilon^{-5})$ when using $\Omega(1)$ samples per iteration. To improve convergence sample complexity, \citet{reisizadeh2023variancereducedclippingnonconvexoptimization} employed a large batch size $\mathcal{O}(\epsilon^{-2})$ to suppress the impact of the variance term and achieve $\mathcal{O}(\epsilon^{-4})$ complexity. We summarize these results under mild assumptions in Table~\ref{tab: clip-related work}. 

A number of works also investigated the performance of diverse stochastic algorithms—extending beyond SGD and clipped SGD under the generalized-smooth condition.
\citet{adagradnorm,faw2023relax, hongAdagrad} studied AdaGrad \citep{duchiAdagrad} under generalized-smooth and affine variance assumption with different learning rate schemes. They all attain $\mathcal{\tilde{O}}(1/\sqrt{T})$ convergence rate under mild conditions. \citet{trustgensmooth} studied trust-region methods convergence under generalized-smoothness.
For stochastic acceleration methods under the generalized-smoothness condition, \citet{genralclip} proposed a general clipping framework by leveraging momentum clipping and they achieve $\mathcal{O}(\epsilon^{-4})$ sample complexity under almost sure bounded noise assumption; \citet{jin2021nonconvex} studied normalized SGD with momentum under parameter-dependent learning rates schemes, which achieves $\mathcal{O}(\epsilon^{-4})$ sample complexity under generalized-smooth and affine variance assumption; \citet{tuningensmooth} studied normalized SGD with momentum \citep{NSGDm} under parameter-agnostic learning rates schemes, which establishes $\tilde{\mathcal{O}}({\epsilon^{-4}})$ convergence rate under bounded variance assumption.
\citet{pmlr-v202-chen23ar,reisizadeh2023variancereducedclippingnonconvexoptimization} demonstrated that the SPIDER algorithm \citep{fang2018spidernearoptimalnonconvexoptimization} can reach the optimal $\mathcal{O}(\epsilon^{-3})$ sample complexity under affine variance assumption for sum-type functions by employing a large batch size. Furthermore, \citet{qigensmoothadam, huishuaigensmoothadam,huishuaiadam2,adamconvg} explored the convergence of RMSprop \citep{rmsprop} and Adam \citep{kingma2014adam} under different noise assumptions, including coordinate-wise affine variance, expected affine variance, bounded variance and sub-gaussian distribution. The acceleration of sign‑SGD was explored by \citet{signSGDacc,signSGDimprov}, and its generalized‑smooth variants were subsequently analyzed by \citet{mingruisignsgdgs,generalsmoothsignSGD}, where they both obtain $\mathcal{O}(\epsilon^{-4})$ under almost‑sure bound and bounded variance assumptions, respectively.

\begin{table}[htbp]
\resizebox{\textwidth}{!}{
\begin{tabular}{lcccccccc}
\toprule
Method & 
Smoothness$^{1}$ & 
Noise assumption$^{2}$ &
Algorithm$^{3}$ &
Batch Size$^{4}$ &
Convergence$^{5}$ &
Complexity
\\
\midrule
\citet{zhang2020gradientclippingacceleratestraining}       & GS           &    a.s. bound   & Clip-SGD & $\Omega(1)$ & $\epsilon$-stationary & $\mathcal{O}(\epsilon^{-4})$     \\
\citet{zhangLsmoothclip} & LS & var bound & Clip-SGD & $\Omega(1)$ & $\epsilon$-stationary &$\mathcal{O}(\epsilon^{-4})$ \\
\citet{genralclip} & GS & a.s. bound & General-Clip & $\Omega(1)$ & $\epsilon$-stationary & $\mathcal{O}(\epsilon^{-4})$ \\
\citet{li2023convexnonconvexoptimizationgeneralized} & GS & var bound & SGD & $\Omega(1)$ & $\epsilon$-stationary & $\mathcal{O}(\epsilon^{-4})$ \\
\citet{reisizadeh2023variancereducedclippingnonconvexoptimization} & GS & var bound & Clip-SGD & $\Omega(\epsilon^{-2})$ & $\epsilon$-stationary &$\mathcal{O}(\epsilon^{-4})$\\
\citet{usitchclip} & GS & var bound & Clip-SGD & $\Omega(1)$ & $\epsilon$-stationary & $\mathcal{O}(\epsilon^{-5})$ \\
Our Method     &  GS   & a.s. affine & IAN-SGD & $\Omega(1)$ & $\epsilon$-stationary &$\mathcal{O}({\epsilon^{-4}})$    \\
\bottomrule
\end{tabular}}
\caption{Comparison of existing clipping-based algorithms. Denote the stochastic gradient and clipping threshold as $\nabla f_{\xi}(w_t)$ and $c$, respectively. Explanation of abbreviations: (GS) refers to the generalized-smoothness condition, and (LS) refers to the standard $L$-smoothness condition. (a.s. bound) means that for every sample, the stochastic gradient bias is almost surely bounded, i.e., $\| \nabla f_{\xi}(w_t) - \nabla F(w_t) \| \leq \tau_2$; (var bound) denotes the bounded variance condition, i.e., $\mathbb{E}_{\xi}[\| \nabla f_{\xi}(w_t) - \nabla F(w_t) \|^2] \leq \tau_2^2$; (a.s. affine) implies that for every sample, the stochastic gradient bias satisfies the affine bound $\| \nabla f_{\xi}(w_t) - \nabla F(w_t) \| \leq \tau_1 \| \nabla F(w_t) \| + \tau_2$ almost surely, for some $0 \leq \tau_1 < 1$ and $\tau_2 > 0$. The term \textit{Clip-SGD} refers to algorithms that use stochastic gradient clipping of the form $w_{t+1} = w_t - \gamma \min\{1, \frac{c}{\|\nabla f_{\xi}(w_t)\|}\} \nabla f_{\xi}(w_t)$; \textit{SGD} refers to stochastic gradient descent;  \textit{General-Clip} denotes algorithms that incorporate momentum during clipping and updates $w_{t+1} = w_t - ( \nu \min\{c, \frac{\gamma}{\|m_t\|} \} m_t + (1 - \nu) \min\{c, \frac{\gamma}{\|\nabla f_{\xi}(w_t)\|} \} \nabla f_{\xi}(w_t) )$, where $m_{t+1} = (1 - \vartheta) m_t + \vartheta \nabla f_{\xi}(w_t)$. The label $\epsilon$-stationary indicates that the algorithm can converge to an arbitrary target error. For \citet{usitchclip}, we report their convergence result under learning rate $\gamma = \epsilon^{2} / \left(\tau_2^2 (L_0 + cL_1)\right)$ with $c=\tau_2/\epsilon$, where their analysis guarantees convergence to an $\epsilon$-stationary point. }
\label{tab: clip-related work}
\end{table}

\noindent\textbf{Machine Learning Applications.}
Generalized-smoothness has been studied under various machine learning frameworks. \citet{levy2020large,jin2021nonconvex} studied the dual formulation of regularized DRO problems, where the loss function objective satisfies generalized-smoothness. \citet{metagensmooth} identified their meta-learning objective's smoothness constant increases with the norm of the meta-gradient. \citet{bilevelgensmooth1,bilevelgensmooth2,accbilevel,mingruiclip} explored algorithms for bi-level optimization and federated learning within the context of generalized-smoothness. \citet{qimulti-obj} developed algorithms for multi-task learning problem where the objective is generalized-smooth. \citet{onlinegensmooth} studied online mirror descent when the objective is generalized-smooth.
\citet{ziyiminmax} studied min-max optimization algorithms' convergence behavior under generalized-smooth condition. There is a concurrent work \citep{INSGDcompeteversion} using independent sampling with clipped SGD framework to solve variational inequality problem (SVI). Based on this idea, they also propose stochastic Korpelevich method for clipped SGD.
Under generalized-smooth condition, they proved almost-sure convergence in terms of distance to solution set tailored for solving stochastic SVI problems.

\section{Notations}
Throughout the work, we denote $D$ as a set of training examples, $\xi$ as an example sampled from the training dataset $D$, $f(\cdot)$ as the objective function, $f_{\xi}(\cdot)$ as the objective function associated with data sample $\xi$, $F(w)=\mathbb{E}_{\xi\sim \mathbb{P}}[f_{\xi}(w)]$ as the expected loss over all samples following distribution $\mathbb{P}$, and $\|\cdot\|$ as $\ell_2$-norm over Euclidean space.  Let $w\in \mathbf{R}^d$ denote the parameters of function $f$ (i.e., the weights for linear model and neural networks), $f^*$ denote the infimum of $f$. $L_0, L_1$ and $\alpha$ are positive constants used to characterize the generalized-smoothness condition (see Assumption \ref{assum1}). $\rho, \mu$ are positive constant used to describe generalized P{\L}-condition (see Assumption \ref{assum2}). $\tau_1,\tau_2$ characterize the stochastic gradient noise (see Assumption \ref{assum6}).  $\gamma, \beta$ denote the learning rate and normalization power used in adaptive normalization, respectively. In the Distributionally Robust Optimization (DRO) experiments (see Section~\ref{sec: exp}), we define $L(w, \eta)$ with parameter $w$ and dual variable $\eta$, as the dual objective derived from a regularized $\phi$-divergence primal DRO problem.

\section{Generalized-Smooth nonconvex Optimization}
We first introduce generalized-smooth optimization problems. Consider the following optimization problem.
\begin{align}
    \min_{w \in \mathbf{R}^d} f(w), 
\end{align}
where $f: \mathbf{R}^d \rightarrow \mathbf{R}$ denotes a nonconvex and differentiable function and $w$ corresponds to the model parameters. We assume that function $f$ satisfies the following generalized-smooth condition. 
\begin{assumption}[Generalized-smooth]\label{assum1}
    The objective function $f$ satisfies the following conditions.
    \begin{enumerate}
        \item $f$ is differentiable and bounded below, i.e., $f^* := \inf_{w\in \mathbf{R}^d} f(w) > -\infty$;
        \item There exists constants $L_0, L_1>0$ and $\alpha\in [0,1]$ such that for any $w$, $w' \in \mathbf{R}^d$, it holds that
        \begin{align}\label{eq: gs}            \big \|\nabla f(w)- \nabla f(w') \big \|
        \leq\big(L_0+L_1 \big \|\nabla f(w') \big \|^{\alpha}\big)\big \|w - w' \big \|.
        \end{align}
    \end{enumerate}
\end{assumption}
The generalized-smooth condition in Assumption \ref{assum1} is a generalization of the standard smooth condition, which corresponds to the special case of $\alpha=0$. It allows the smoothness parameter to scale with the gradient norm polynomially, and therefore Assumption \ref{assum1} can model functions with highly irregular geometry. Moreover, following the standard proof, it is easy to show that generalized-smooth functions satisfy the following descent lemma. See Proof \ref{proof of descent lemma} in Appendix \ref{Appendix B}.

\begin{restatable}{lemma}{mainlemmaone}
\label{descent lemma}
Under Assumption \ref{assum1}, function $f$ satisfies, for any $w, w' \in \mathbf{R}^d$,
    \begin{align}
        f(w)\leq f(w')+ \langle \nabla f(w'), w - w' \rangle
        + \frac{1}{2} (L_0+L_1 \big \| \nabla f(w')\big \|^{\alpha}) \big \| w - w' \big \|^2.
        \label{origindescent}
    \end{align}
\end{restatable}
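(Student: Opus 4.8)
The plan is to reduce the multivariate statement to a one-dimensional integral along the segment joining $w'$ to $w$, and then control the integrand using the generalized-smooth inequality \eqref{eq: gs} from Assumption \ref{assum1}. Concretely, I would define $\phi(t) := f\big(w' + t(w - w')\big)$ for $t \in [0,1]$, so that $\phi(0) = f(w')$ and $\phi(1) = f(w)$. Since $f$ is differentiable, the chain rule gives $\phi'(t) = \langle \nabla f(w' + t(w-w')),\, w - w'\rangle$, and the fundamental theorem of calculus yields $f(w) - f(w') = \int_0^1 \langle \nabla f(w' + t(w-w')),\, w - w'\rangle\, dt$.

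Subtracting the linear term $\langle \nabla f(w'), w - w'\rangle$ from both sides produces the remainder in integral form,
\begin{align}
f(w) - f(w') - \langle \nabla f(w'), w - w'\rangle = \int_0^1 \big\langle \nabla f(w' + t(w-w')) - \nabla f(w'),\, w - w'\big\rangle\, dt.
\end{align}
Next I would apply the Cauchy--Schwarz inequality inside the integral, bounding the inner product by $\big\|\nabla f(w' + t(w-w')) - \nabla f(w')\big\|\,\|w - w'\|$.

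The key step is to invoke \eqref{eq: gs} with first argument $w' + t(w-w')$ and second argument $w'$. This orientation is essential: because the gradient-dependent factor in \eqref{eq: gs} is evaluated at the \emph{second} argument, placing $w'$ there yields the factor $L_0 + L_1\|\nabla f(w')\|^{\alpha}$, which is constant in $t$ and matches the target. Combined with $\big\|(w' + t(w-w')) - w'\big\| = t\,\|w - w'\|$, this gives $\big\|\nabla f(w' + t(w-w')) - \nabla f(w')\big\| \le (L_0 + L_1\|\nabla f(w')\|^{\alpha})\, t\, \|w - w'\|$. Substituting this bound and pulling the $t$-independent factors outside, I would evaluate $\int_0^1 t\, dt = \tfrac12$, which delivers exactly the claimed coefficient $\tfrac12(L_0 + L_1\|\nabla f(w')\|^{\alpha})\|w - w'\|^2$.

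The argument is otherwise entirely routine; the only point demanding care — and the one I would flag as the main (if minor) obstacle — is the directionality of \eqref{eq: gs} described above. Swapping the two arguments would instead introduce the varying quantity $\|\nabla f(w' + t(w-w'))\|^{\alpha}$ under the integral, which does not factor out cleanly and would prevent recovering the stated descent inequality with a reference-point smoothness coefficient.
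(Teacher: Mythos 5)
Your proposal is correct and follows essentially the same route as the paper's proof: parametrize the segment, apply the fundamental theorem of calculus, use Cauchy--Schwarz, and invoke the generalized-smooth inequality with the anchor point as the second argument so the factor $L_0 + L_1\|\nabla f(w')\|^{\alpha}$ is constant along the path. The only cosmetic difference is that the paper writes the argument with the roles of $w$ and $w'$ interchanged; your careful remark about the directionality of \eqref{eq: gs} is exactly the point the paper's step (ii) relies on.
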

 
We note that there are several variants of generalized-smooth conditions proposed by the previous works \citep{zhang2020gradientclippingacceleratestraining,jin2021nonconvex,pmlr-v202-chen23ar}. Below, we briefly discuss the relationship between the generalized-smooth condition in Assumption \ref{assum1} and these existing notions.
\begin{remark}[$(L_0, L_1)$-generalized-smooth condition]
The descent lemma of $(L_0, L_1)$-generalized-smooth condition proposed in \citet{zhang2020gradientclippingacceleratestraining} is given as
\begin{align}
    f(w)\leq f(w')+ \langle \nabla f(w'), w - w' \rangle +\frac{1}{2}(4L_0+5L_1 \| \nabla f(w')\|) \| w - w' \|^2,\nonumber 
\end{align}
which is the same as Lemma \ref{descent lemma} with $\alpha = 1$ up to differences in the constant coefficients. 

\end{remark}
\begin{remark}[Symmetric generalized-smooth condition]
\citet{pmlr-v202-chen23ar} introduced a symmetric version of generalized-smooth condition, by replacing $\|\nabla f(w') \|^{\alpha}$ in \eqref{eq: gs} with $\max_{w_{\theta}}\|f(w_{\theta})\|^{\alpha}$, where $w_{\theta}=\theta w'+(1-\theta)w$. 
We notice that the asymmetric generalized-smooth condition adopted in our Assumption \ref{assum1} can also imply the following similar symmetric generalized-smooth condition,
\begin{align}
     \big \|\nabla f(w)- \nabla f(w') \big \|\leq \big(L_0 +L_1 \big(\frac{\big \|\nabla f(w) \big \|^{\alpha}+\big \|\nabla f(w') \big \|^{\alpha}}{2}\big)\big)\big \|w - w' \big \|.
     \label{symmetric gs}
\end{align}
\end{remark}

In \citet{jin2021nonconvex}, it has been shown that certain regularized nonconvex distributionally robust optimization (DRO) problems satisfy the generalized-smooth condition in \eqref{eq: gs} with $L_0=L$, $L_1 = \frac{2M(G+1)^2}{\lambda}$, $\alpha = 1$. Moreover, the classic nonconvex phase retrieval problem \citep{PhaseRetrieval1,PhaseRetrieval2} can be shown to satisfy the above symmetric generalized-smooth condition in \eqref{symmetric gs} (see Proof \ref{proof of Phase Retrieval} in Appendix \ref{Appendix C}). 


In the following sections, we first consider deterministic generalized-smooth optimization and study the impact of adaptive gradient normalization on the convergence rate of gradient methods. Then, we consider stochastic generalized-smooth optimization and propose a novel independent sampling scheme for improving the convergence guarantee of stochastic gradient methods under relaxed noise assumption.

\section{Adaptive Gradient Normalization for Deterministic Generalized-Smooth Optimization}
In deterministic generalized-smooth optimization, several previous works have empirically demonstrated the faster convergence of normalized gradient descent-type algorithms (e.g., clipped GD) over the standard gradient descent algorithm in various machine learning applications \citep{zhang2020gradientclippingacceleratestraining,pmlr-v202-chen23ar}. On the other hand, theoretically, these algorithms were only shown to achieve the same iteration complexity $\mathcal{O}(\epsilon^{-2})$ as the gradient descent algorithm in generalized-smooth optimization.
In this section, to further advance the theoretical understanding and explain the inconsistency between theory and practice, we explore the advantage of adapting gradient normalization to the special Polyak-{\L}ojasiewicz-type (P{\L}) geometry in generalized-smooth optimization. We aim to show that gradient normalization, when properly adapted to the underlying P{\L} geometry, can help accelerate the convergence rate in generalized-smooth optimization.

Specifically, we consider the class of generalized-smooth problems that satisfy the following generalized P{\L} geometry.
\begin{assumption}[Generalized Polyak-{\L}ojasiewicz Geometry]\label{assum2}
       There exists constants $\mu\in \mathbf{R}_+$ and $0<\rho\leq 2$ such that $f(\cdot)$ satisfies, for all $w\in \mathbf{R}^d$,
    \begin{align}
        \big \|\nabla f(w) \big \|^{\rho} \geq 2\mu( f(w)-f^*).
        \label{gen PL}
    \end{align} 
\end{assumption}
The above generalized P{\L} condition is inspired by the Kurdyka-{\L}ojasiewicz (K\L)-exponent condition proposed in \citet{KLexponent}. When $\rho > 1$, \eqref{gen PL} reduces to the K\L-exponent condition. When $\rho=2$, \eqref{gen PL} reduces to the standard P{\L} condition. Moreover, some recent works have shown that P{\L}-type geometries widely exist in the loss landscape of phase retrieval \citep{PhaseretrievalPL} and over-parametrized deep neural networks \citep{genPL1,genPL2}, and we hope that our analysis based on Assumption \ref{assum2} will allow researchers to rethink the relationship between adaptive normalization and loss landscape geometry.

Here, we consider the adaptively normalized gradient descent (AN-GD) algorithm proposed by \citet{pmlr-v202-chen23ar} for generalized-smooth nonconvex optimization. The algorithm normalizes the gradient update as follows
\begin{align}\label{eq: AN-GD}
    \text{(AN-GD)} \quad w_{t+1} = w_t - \gamma \frac{\nabla f(w_t)}{ \| \nabla f(w_t) \|^{\beta}},
\end{align}
where $\gamma>0$ denotes the learning rate and $\beta$ is a normalization scaling parameter that allows us to adapt the normalization scale of the gradient norm to the underlying function geometry. Intuitively, when the gradient norm is large, a smaller $\beta$ would make the normalized gradient update more aggressive; when the gradient norm is small, normalization can slow down gradient vanishing and improve numerical stability. 

\citet{pmlr-v202-chen23ar} studied AN-GD in generalized-smooth nonconvex optimization, showing that it achieves the standard $\mathcal{O}(\epsilon^{-2})$ iteration complexity lower bound. In the following theorem, we obtain the convergence rate of AN-GD under the generalized P{\L} condition. See Proof \ref{Proof of beta GD convergence} in Appendix \ref{Appendix E}. 

\begin{restatable}[Convergence of AN-GD]{theorem}{mainthmone}\label{beta GD convergence}
    Let Assumptions \ref{assum1} and \ref{assum2} hold. Denote $\Delta_t:=f(w_t) - f^*$ as the function value gap. Choose target error $0< \epsilon\leq \min\{1,\frac{1}{2\mu}\}$ and define learning rate $\gamma = \frac{(2\mu\epsilon)^{\beta/\rho}}{8(L_0+L_1)+1}\leq \frac{2}{\mu}$ for some $\beta\in[\alpha,1]$. Then, the following statements hold,  
    \begin{itemize}[leftmargin = *]
        \item 
    If $\beta<2-\rho$, in order to achieve $\Delta_T\leq \epsilon$, 
    the total number of iterations must satisfy
    \begin{align}
    T \geq\max \Big\{\frac{8\rho(8(L_0+L_1)+1)}{(2-\beta-\rho)(2\mu)^{2/\rho}\epsilon^{(2-\rho)/{\rho}}}, \frac{1}{(2^{(2-\beta-\rho)/(2-\beta)}-1)\Delta_0^{{(\rho+\beta-2)}/{\rho}}\epsilon^{(2-\beta-\rho)/{\rho}}} \Big\}= \Omega \big((\frac{1}{\epsilon})^{\frac{2-\rho}{\rho}}\big).
    \end{align}
        \item 
        If $\beta=2-\rho$, in order to achieve $\Delta_{T}\leq \epsilon$,
        the total number of iterations must satisfy
        \begin{align}
        T\geq\frac{2^{1-\beta/\rho}\cdot(8(L_0+L_1)+1)}{\mu\cdot(\mu\epsilon)^{\beta/\rho}}\cdot\log(\frac{\Delta_0}{\epsilon})=\Omega\big((\frac{1}{\epsilon})^{\frac{\beta}{\rho}}\cdot \log(\frac{\Delta_0}{\epsilon})\big).
        \end{align}
        \item 
        {If $1\ge\beta>2-\rho$, and for some $T_0$ satisfying $\Delta_{T_0}\leq \mathcal{O}\big((\gamma\mu^{(2-\beta)/(\rho+\beta-2)})^{\frac{\rho}{\rho+\beta-2}}\big)$, then
        the total number of iterations after $T_0$ must satisfy}
        \begin{align}
        T\gtrsim&\Omega\Big(\log\Big(\big(\frac{1}{\epsilon}\big)^{\frac{\beta}{\rho+\beta-2}}\Big)+\log\Big(\log\Big(\frac{(2\mu)^{2/(\rho+\beta-2)}}{(32(L_0+L_1)+4)^{\rho/(\rho+\beta-2)}\epsilon}\Big)\Big)\Big)=\Omega \big(\log\big((\frac{1}{\epsilon})^{\frac{\beta}{\rho+\beta-2}}\big)\big).
        \end{align}
    \end{itemize}
\end{restatable}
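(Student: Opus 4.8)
The plan is to reduce the AN-GD iteration $w_{t+1}=w_t-\gamma\nabla f(w_t)/\|\nabla f(w_t)\|^{\beta}$ to a scalar recursion on the gap $\Delta_t=f(w_t)-f^*$ and then analyze that recursion in the regime $\beta>2-\rho$. First I would substitute the update into the descent lemma (Lemma \ref{descent lemma}). Writing $g_t:=\|\nabla f(w_t)\|$, the inner-product term becomes $-\gamma g_t^{2-\beta}$ and the quadratic term becomes $\tfrac{\gamma^2}{2}(L_0+L_1 g_t^{\alpha})g_t^{2-2\beta}$, so that
\begin{align}
\Delta_{t+1}\le \Delta_t-\gamma g_t^{2-\beta}+\tfrac{\gamma^2}{2}\big(L_0+L_1 g_t^{\alpha}\big)g_t^{2-2\beta}. \nonumber
\end{align}

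Second, I would absorb the second-order terms into the first-order term. Factoring out $\gamma g_t^{2-\beta}$, the bracket is $1-\tfrac{\gamma}{2}(L_0+L_1 g_t^{\alpha})g_t^{-\beta}$, and the role of the choice $\gamma=(2\mu\epsilon)^{\beta/\rho}/(8(L_0+L_1)+1)$ together with $\beta\in[\alpha,1]$ is precisely to keep this bracket at least $\tfrac12$ while $g_t\ge(2\mu\epsilon)^{1/\rho}$. Since the generalized P{\L} condition (Assumption \ref{assum2}) gives $g_t\ge(2\mu\Delta_t)^{1/\rho}$, this holds whenever $\Delta_t\ge\epsilon$, the only regime that matters. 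I then obtain the clean decrease $\Delta_{t+1}\le\Delta_t-\tfrac{\gamma}{2}g_t^{2-\beta}$ and, applying P{\L} once more, the recursion
\begin{align}
\Delta_{t+1}\le \Delta_t-\tfrac{\gamma}{2}(2\mu)^{(2-\beta)/\rho}\,\Delta_t^{(2-\beta)/\rho}. \nonumber
\end{align}
This backbone is shared by all three cases, which differ only through the exponent $p:=(2-\beta)/\rho$ relative to $1$; the final statement is $\beta>2-\rho$, i.e.\ $p<1$.

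Third, for $p<1$ I would exploit that the per-step factor $\Delta_{t+1}/\Delta_t\le 1-\tfrac{\gamma}{2}(2\mu)^{p}\Delta_t^{\,p-1}$ has exponent $p-1<0$, so the contraction sharpens as $\Delta_t$ shrinks. The threshold $\Delta_{T_0}=\mathcal O\big((\gamma\mu^{(2-\beta)/(\rho+\beta-2)})^{\rho/(\rho+\beta-2)}\big)$ is exactly the scale at which $\tfrac{\gamma}{2}(2\mu)^p\Delta_t^{\,p-1}$ becomes order one, marking the onset of super-linear contraction. In the terminal phase after $T_0$ I would pass to $v_t:=\log(1/\Delta_t)$ and aim to convert the recursion into a relation of the form $v_{t+1}\ge q\,v_t-C$ with an effective rate $q>1$ tied to $\rho+\beta-2$; iterating this and solving for the first $t$ with $v_t\ge\log(1/\epsilon)$ is what produces the leading $\log\big((1/\epsilon)^{\beta/(\rho+\beta-2)}\big)$ term and the $\log\log(1/\epsilon)$ correction, with the stated constants traced through the $\mu,L_0,L_1$ dependence.

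The hard part will be this terminal-phase analysis. The naive route for $p<1$, namely the change of variables $u_t=\Delta_t^{1-p}$ that turns Step~2's inequality into $u_{t+1}\le u_t-(1-p)\tfrac{\gamma}{2}(2\mu)^p$, only captures a slow additive (sublinear) drift and entirely misses the acceleration, so it cannot by itself deliver a logarithmic count; the logarithmic rate lives exactly in the region where $\Delta_t^{\,p-1}$ is large, which is also where the second-order error terms are most threatening. Thus the two delicate points are (i) verifying that the bracket in Step~2 stays bounded away from zero all the way down to $\Delta_t=\epsilon$ for this specific $\gamma$, which I expect to require an a priori bound on $g_t$ along the trajectory so the error terms cannot blow up, and (ii) setting up the correct accelerating recursion below $\Delta_{T_0}$ and summing it carefully to separate the $\log$ and $\log\log$ contributions. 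Reconciling the crude scalar recursion with the claimed super-linear terminal behavior is where I expect the real work to be.
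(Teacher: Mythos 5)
Your Steps 1--2 are correct, and they are in fact a slightly cleaner route to the backbone recursion than the paper's own. The paper (Appendix \ref{Appendix D}) controls the second-order terms with the auxiliary inequality $Cx^{w}\le x^{w'}+C^{w'/\Delta}$ from \citet{pmlr-v202-chen23ar}, which produces the additive error $\frac{\gamma}{4}(2\mu\epsilon)^{(2-\beta)/\rho}$ in \eqref{descentPL}, and then absorbs that error when $\Delta_t>\epsilon$ to reach \eqref{descentPL2}; you instead impose $\Delta_t\ge\epsilon$ from the outset and bound the bracket $1-\frac{\gamma}{2}\big(L_0g_t^{-\beta}+L_1g_t^{\alpha-\beta}\big)$ directly via the P{\L} lower bound $g_t\ge(2\mu\epsilon)^{1/\rho}$. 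This works: since $\beta\ge\alpha\ge0$, both exponents $-\beta$ and $\alpha-\beta$ are nonpositive, so $\gamma L_0g_t^{-\beta}\le L_0/(8(L_0+L_1)+1)\le 1/8$ and $\gamma L_1g_t^{\alpha-\beta}\le L_1(2\mu\epsilon)^{\alpha/\rho}/(8(L_0+L_1)+1)\le 1/8$, giving a bracket of at least $7/8$. For the same reason, your worry (i) about needing an a priori upper bound on $g_t$ along the trajectory is unfounded: large gradients only make these terms smaller, and the P{\L} bound handles small ones. Note, however, that you analyze only the third bullet ($\beta>2-\rho$); the first bullet (which in the paper needs the auxiliary-function argument with $\Phi(t)=\frac{1}{\theta-1}t^{1-\theta}$ and a two-case comparison) and the second bullet (geometric decay) are never established in your proposal.

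The genuine gap is exactly where you place it, and your sketch does not fill it. You correctly name the target relation $v_{t+1}\ge q\,v_t-C$, equivalently $\Delta_{t+1}\le K\Delta_t^{\omega}$ with $\omega=\rho/(2-\beta)>1$, but you supply no derivation, and the mechanism you describe --- the multiplicative factor $1-\frac{\gamma}{2}(2\mu)^{p}\Delta_t^{p-1}$ ``sharpening'' as $\Delta_t$ shrinks --- cannot deliver it. In the regime where the recursion is valid ($\Delta_t>\epsilon$), nonnegativity of $\Delta_{t+1}$ already forces $\frac{\gamma}{2}(2\mu)^{p}\Delta_t^{p-1}\le1$, so the factor stays in $[0,1)$ and in log space you only obtain bounded additive increments, never a factor $q>1$; below the threshold the factor is negative, so the multiplicative view degenerates and carries no information beyond $\Delta_{t+1}=0$. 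The paper's actual step is short but different, and it is the missing idea: rearranging \eqref{descentPL2} to $\Delta_{t+1}+C\gamma\Delta_t^{1/\omega}\le\Delta_t$ and using $\Delta_{t+1}\ge0$ together with monotonicity $\Delta_{t+1}\le\Delta_t$ yields $C\gamma\Delta_{t+1}^{1/\omega}\le C\gamma\Delta_t^{1/\omega}\le\Delta_t$, hence $\Delta_{t+1}\le(C\gamma)^{-\omega}\Delta_t^{\omega}$ --- precisely your log-space relation with $q=\omega$ and $C=\omega\log\frac{1}{C\gamma}$, which the paper then iterates after $T_0$ and log-transforms twice to extract the stated count. (One can also verify the pointwise inequality $x-cx^{1/\omega}\le c^{-\omega}x^{\omega}$ for all $x\ge0$, which gives the same conversion in one line.) The essential trick is to use nonnegativity to isolate $\Delta_t$ on one side \emph{before} passing to a multiplicative or power form, rather than after; without this or an equivalent manipulation, your Step 3 --- the central claim of the proposal --- does not close.
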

Theorem \ref{beta GD convergence} characterizes the convergence rates of AN-GD under different choices of $\rho$ and $\beta$. In particular, when $\alpha=\beta=0$ and $\rho=2$, Theorem \ref{beta GD convergence} reduces to the linear convergence rate achieved by gradient descent under the standard P{\L} and $L$-smooth condition. 
Theorem \ref{beta GD convergence} also guides the choice of gradient normalization parameter $\beta$ under different geometric conditions. For example, if there exists $\beta \in [\alpha, 1]$ such that $\rho+\beta>2$, AN-GD can boost its convergence rate from polynomial to local linear convergence. It also indicates when $\rho$ is very small such that $\rho+\beta<2$, the effects of $\beta$ can be marginal. However, for $\rho+\beta\leq 2$, the iteration complexities depend more on the specific values of $\rho$ and $\beta$.
For example, 
when $\rho=1$ and consider two different choices of gradient normalization parameters $\beta_1=\frac{2}{3},\beta_2=1$, AN-GD achieves the iteration complexities $\mathcal{O}({\epsilon}^{-1})$ and $\tilde{\mathcal{O}}({\epsilon}^{-1})$, respectively. This result matches the empirical observation made in \citep{pmlr-v202-chen23ar} that choosing a smaller $\beta\in [\alpha,1]$  can improve the convergence rate.

We note that several concurrent works also studied normalized gradient descent methods in deterministic convex settings. In \citet{petergensmooth}, the authors consider various update rules such as
$
w_{t+1} = w_t - \gamma \tfrac{\nabla f(w_t)}{L_0 + L_1 \|\nabla f(w_t)\|} \quad \text{and} \quad w_{t+1} = w_t - \gamma \tfrac{f(w_t) - f(w^*)}{\|\nabla f(w_t)\|} \nabla f(w_t).$
For generalized smooth convex optimization, they obtain a two-phase convergence result, similar to the case when $1 \geq \beta > 2 - \rho$ in Theorem~\ref{beta GD convergence}, where the second phase achieves complexity bound $\mathcal{O}(\epsilon^{-1})$. They further employ the triangle method to derive a global $\mathcal{O}(\epsilon^{-1})$ complexity bound. In \citet{L0L1newstudy}, the authors propose a related framework by choosing $\gamma = \frac{1}{\sqrt{T}\|\nabla f(w_t) \|}$, which aligns with our AN-GD when $\beta = 1$. They also obtain $\mathcal{O}(\epsilon^{-1})$ complexity for generalized smoothness convex optimization.

\subsection{Comparison between AN-GD and GD.}
In \citet{li2023convexnonconvexoptimizationgeneralized}, it has been shown that GD achieves $\mathcal{O}(\log({\epsilon}^{-1}))$, $\mathcal{O}({\epsilon}^{-1})$, $\mathcal{O}({\epsilon^{-2}})$ complexity bounds for generalized-smooth strongly convex, convex and non-convex optimization, respectively. 
Compared with \citet{pmlr-v202-chen23ar}, both AN-GD and GD achieve $\mathcal{O}({\epsilon^{-2}})$ complexity bound for generalized non-convex optimization,
and hence are optimal deterministic methods. On the other hand, the following Proposition \ref{GD convergence} further obtains the convergence rate of gradient descent (GD) under the same Assumptions \ref{assum1} and \ref{assum2}. See Proof \ref{Proof of GD convergence} in Appendix \ref{Appendix F}.
\begin{restatable}[Convergence of GD]{proposition}{gdconvergence}\label{GD convergence}
Let Assumptions \ref{assum1} and \ref{assum2} hold. Assume there exists a positive constant $G$ such that $\|\nabla f(x_t) \|\leq G, \forall t\in T$, 
    When $\rho=\alpha=1$ and setting $\gamma \leq \min \{\frac{1}{L_0}, \frac{1}{2L_1G} \}$, gradient descent converges to an $\epsilon$-stationary point within $T\geq\Omega(\frac{G}{\epsilon})$ iterations.
\end{restatable}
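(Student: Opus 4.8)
The plan is to combine the generalized descent lemma (Lemma~\ref{descent lemma}) with the generalized P\L{} condition (Assumption~\ref{assum2}) so as to convert the one-step progress into a quadratic recursion in the function-value gap $\Delta_t = f(w_t) - f^*$, and then read off the rate. Writing $g_t := \|\nabla f(w_t)\|$ and following the convention of Theorem~\ref{beta GD convergence}, I take the target to be $\Delta_T \le \epsilon$. First I would apply Lemma~\ref{descent lemma} with $\alpha = 1$ to the GD update $w_{t+1} = w_t - \gamma \nabla f(w_t)$, which gives $f(w_{t+1}) \le f(w_t) - \gamma g_t^2 + \tfrac{\gamma^2}{2}(L_0 + L_1 g_t) g_t^2$. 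The bounded-gradient hypothesis $g_t \le G$ lets me replace the iterate-dependent factor $L_0 + L_1 g_t$ by the uniform effective smoothness $L_0 + L_1 G$. The two learning-rate constraints $\gamma \le 1/L_0$ and $\gamma \le 1/(2 L_1 G)$ are designed precisely so that $\gamma(L_0 + L_1 G) \le 3/2$, making the quadratic term at most $\tfrac{3}{4}$ of the linear term and yielding the clean contraction $\Delta_{t+1} \le \Delta_t - \tfrac{\gamma}{4} g_t^2$.

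Next I would inject Assumption~\ref{assum2} with $\rho = 1$, namely $g_t \ge 2\mu \Delta_t$, so that $g_t^2 \ge 4\mu^2 \Delta_t^2$, and substitute this into the contraction to obtain the quadratic recursion $\Delta_{t+1} \le \Delta_t - \gamma \mu^2 \Delta_t^2$. This is the conceptual heart of the argument: because the exponent is $\rho = 1$ rather than the standard $\rho = 2$, the P\L{} inequality produces a $\Delta_t^2$ term rather than a $\Delta_t$ term, so one should expect only a sublinear $\mathcal{O}(1/T)$ decay here instead of the linear convergence that the standard P\L{} condition would give.

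I would then solve the recursion by the standard reciprocal trick: since $\Delta_{t+1} \le \Delta_t$, dividing $\Delta_t - \Delta_{t+1} \ge \gamma \mu^2 \Delta_t^2$ by $\Delta_t \Delta_{t+1}$ gives $\tfrac{1}{\Delta_{t+1}} - \tfrac{1}{\Delta_t} \ge \gamma \mu^2$, and telescoping yields $\Delta_T \le \tfrac{1}{\gamma \mu^2 T}$. Requiring $\Delta_T \le \epsilon$ then forces $T \ge \tfrac{1}{\gamma \mu^2 \epsilon}$, and taking the binding step size $\gamma = \tfrac{1}{2 L_1 G}$ (the relevant regime when $G$ is large) gives $T \ge \tfrac{2 L_1 G}{\mu^2 \epsilon} = \Omega(G/\epsilon)$.

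The recursion-solving is routine; the step that requires care is the first one, namely keeping track of the generalized-smoothness constant. Unlike standard smooth GD, the effective Lipschitz constant $L_0 + L_1 g_t$ depends on the iterate, so the bounded-gradient assumption and the two-sided learning-rate constraint are both essential to extract a fixed contraction factor. The dependence on $G$ enters exactly here, through the conservative step size $\gamma \sim 1/G$ that unnormalized GD must adopt to tame the generalized-smooth geometry, and this penalty is precisely what AN-GD avoids — making the $\Omega(G/\epsilon)$ rate the intended point of contrast with Theorem~\ref{beta GD convergence}.
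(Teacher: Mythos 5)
Your proof is correct, and its core is the same as the paper's: both apply Lemma~\ref{descent lemma} with $\alpha=1$ to the GD update, use the bound $\|\nabla f(w_t)\|\le G$ together with the step-size constraints to obtain the contraction $\Delta_{t+1}\le \Delta_t-\tfrac{\gamma}{4}\|\nabla f(w_t)\|^2$ (the paper splits the constants as $-\gamma/2+\gamma/4$ rather than your $-\gamma+3\gamma/4$, which is immaterial), and then invoke Assumption~\ref{assum2} with $\rho=1$ to arrive at exactly the recursion $\Delta_{t+1}\le \Delta_t-\gamma\mu^2\Delta_t^2$. Where you diverge is the final step: the paper does not solve this recursion directly but instead points to Case I of the proof of Theorem~\ref{beta GD convergence}, i.e.\ the potential-function argument with $\Phi(t)=\tfrac{1}{\theta-1}t^{1-\theta}$ and the two-case split on whether $\Delta_{t+1}^{-\theta}\le 2\Delta_t^{-\theta}$, specialized to $\theta=2$. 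You instead use the standard reciprocal telescoping trick, $\tfrac{1}{\Delta_{t+1}}-\tfrac{1}{\Delta_t}\ge \gamma\mu^2\tfrac{\Delta_t}{\Delta_{t+1}}\ge\gamma\mu^2$, giving $\Delta_T\le \tfrac{1}{\gamma\mu^2 T}$. Your route is more elementary and yields a slightly cleaner complexity $T\ge \tfrac{1}{\gamma\mu^2\epsilon}=\Omega(G/\epsilon)$ with no dependence on $\Delta_0$, whereas the paper's constant $C=\min\{\tfrac{\gamma(2\mu)^\theta}{8},\,(2^{(\theta-1)/\theta}-1)\tfrac{\Delta_0^{1-\theta}}{\theta-1}\}$ carries an extra $\Delta_0$-dependent term; the paper's route buys nothing here beyond reuse of machinery already built for general exponents $\theta>1$. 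One shared caveat, not a gap relative to the paper: like the paper (and Theorem~\ref{beta GD convergence}), you measure convergence by the function-value gap $\Delta_T\le\epsilon$, which under the $\rho=1$ P\L{} condition does not by itself bound $\|\nabla f(w_T)\|$ by $\epsilon$; the proposition's phrase ``$\epsilon$-stationary point'' is being used in this looser sense in both arguments.
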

 From Theorem~\ref{beta GD convergence}, under the setting $\rho = \alpha = 1$ and with $\beta = 1$, AN-GD converges to a $\epsilon$-stationary point in $\tilde{\mathcal{O}}({\epsilon}^{-1})$ iterations.  However, the convergence guarantee for GD requires an additional assumption that the gradient norm is upper bounded by a constant $G$. This constant typically depends on the function value gap $\Delta_0$ and $\|x_0 - x^*\|$, which can be large in generalized smooth optimization settings. Such large value can be the key reason for the slower empirical convergence of GD observed in \citet{pmlr-v202-chen23ar}.

\section{Independently-and-Adaptively Normalized SGD for Stochastic Generalized-Smooth Optimization}
In this section, we study stochastic generalized-smooth optimization problems, where we denote $f_\xi$ as the loss function associated with the data sample $\xi$, and we assume that the following expected loss function $F(\cdot)$ satisfies the generalized-smooth condition in Assumption \ref{assum1}.
\begin{align}\label{eq: sgs}
    \min_{w\in \mathbf{R}^d} F(w) = \mathbb{E}_{\xi\sim \mathbb{P}}\big[f_{\xi}(w) \big].
\end{align}
Having discussed the superior theoretical performance of AN-GD in the previous section, we aim to leverage adaptive gradient normalization to further develop an adaptively normalized algorithm tailored for stochastic generalized-smooth optimization.
\subsection{Normalized SGD and Its Limitations}
To solve the stochastic generalized-smooth problem in \eqref{eq: sgs}, one straightforward approach is to replace the full batch gradient in the AN-GD update rule, \eqref{eq: AN-GD} with the stochastic gradient $\nabla f_\xi(w_t)$, resulting in the following adaptively normalized SGD (AN-SGD) algorithm.
\begin{align}\label{eq: NSGD}
    \text{(AN-SGD)} \quad w_{t+1} = w_t - \gamma \frac{\nabla f_{\xi}(w_t)}{ \| \nabla f_{\xi}(w_t) \|^{\beta}}.
\end{align}
AN-SGD-type algorithms have attracted a lot of attention recently for solving stochastic
generalized-smooth problems \citep{zhang2020gradientclippingacceleratestraining,genralclip,mingruiclip,jin2021nonconvex}. 
In particular, previous works have shown that, when choosing $\beta=1$ and using gradient clipping or momentum acceleration, AN-SGD's variations (e.g., NSGD with momentum \citep{NSGDm, jin2021nonconvex}, clipped SGD \citep{zhang2020gradientclippingacceleratestraining,genralclip}) can achieve a sample complexity of $\mathcal{O}(\epsilon^{-4})$. This result matches the convergence order of the standard SGD \citep{ghadimi2013stochastic} for solving classic stochastic smooth problems. 
However, AN-SGD has several limitations as summarized below. 
\begin{itemize}[leftmargin = *]
    \item {\em Biased gradient estimator:} The normalized stochastic gradient used in \eqref{eq: NSGD} is biased, i.e., 
    $
        \mathbb{E}[\frac{\nabla f_{\xi}(w_t)}{\| \nabla f_{\xi}(w_t) \|^{\beta}} ] \ne \frac{\nabla F(w_t)}{\| \nabla F(w_t)\|^{\beta}}.
    $
    This is due to the dependence between $\nabla f_{\xi}(w_t)$ and  $\| \nabla f_{\xi}(w_t)\|^{\beta}$. In particular, the bias can be huge if the stochastic gradients are diverse, as illustrated in Figure \ref{figure illustration}.
    \begin{figure}
        \centering
        \includegraphics[width=0.4\linewidth]{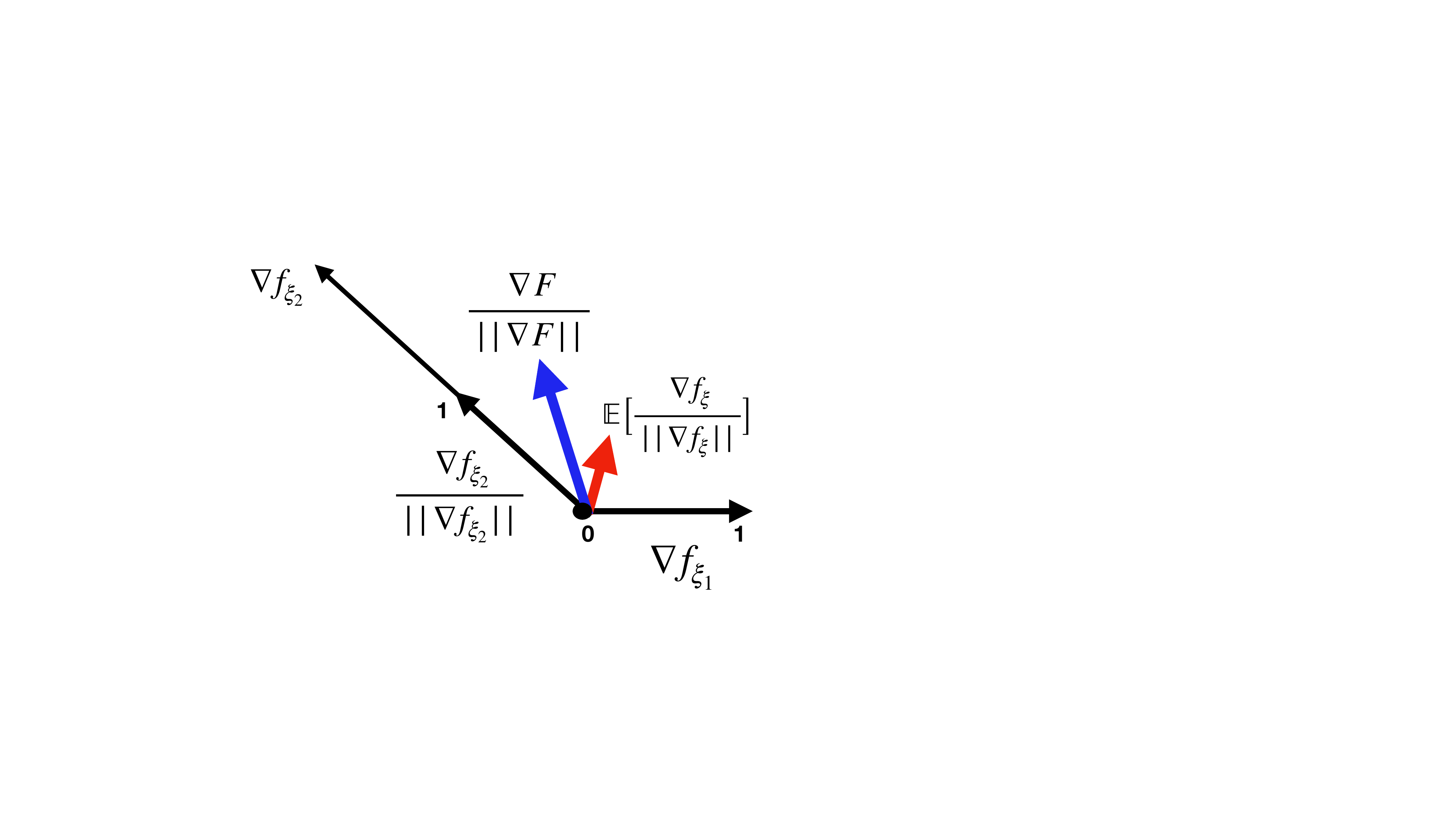}
        \caption{Comparison between normalized full gradient (blue) and expected normalized stochastic gradient (red). Here, $\xi_1$ and $\xi_2$ are sampled uniformly at random.}
        \label{figure illustration}
    \end{figure}
    \item {\em Strong assumptions:} To control the estimation bias and establish theoretical convergence guarantee for ANSGD-type algorithms in generalized-smooth nonconvex optimization, the existing studies need to adopt strong assumptions. For example, \citet{zhang2020gradientclippingacceleratestraining, genralclip} and \citet{mingruiclip} assume that the stochastic approximation error $\|\nabla f_\xi(w) - \nabla F(w)\|$ is bounded by a constant almost surely.
    In practical applications, this constant can be a large numerical number if certain sample $\xi$ happen to be an outlier. 
    On the other hand, \citet{pmlr-v202-chen23ar,reisizadeh2023variancereducedclippingnonconvexoptimization} require AN-SGD type of algorithms such as SPIDER \citep{fang2018spidernearoptimalnonconvexoptimization} to have $\Omega(\epsilon^{-2})$ batch size to ensure fast convergence. It essentially makes it a full-batch algorithm and is impractical in practice.

\end{itemize}
\subsection{Independently-and-Adaptively Normalized SGD}
To overcome the aforementioned limitations, 
we propose the following
independently-and-adaptively normalized stochastic gradient (IAN-SG) estimator 
\begin{align}
    \text{(IAN-SG estimator)} \quad \frac{\nabla f_{\xi}(w)}{\| \nabla f_{\xi'}(w) \|^{\beta}},
\end{align}
where $\xi$ and $\xi'$ are samples drawn {\em independently} from the underlying data distribution. Intuitively, the independence between $\xi$ and $\xi'$ decorrelates the denominator from the numerator, making it an unbiased stochastic gradient estimator (up to a scaling factor). Specifically, we have that
\begin{align}
    \mathbb{E}_{\xi, \xi'} \bigg[ \frac{\nabla f_{\xi}(w)}{ \| \nabla f_{\xi'}(w) \|^{\beta}} \bigg] =  \mathbb{E}_{\xi'} \bigg[ \frac{\mathbb{E}_{\xi} \big [\nabla f_{\xi}(w)\big]}{ \| \nabla f_{\xi'}(w)\|^{\beta}} \bigg] \propto  \nabla F(w).
\end{align}
Moreover, as we show later under mild assumptions, the scaling factor $\mathbb{E} \big[ \| \nabla f_{\xi'}(w)\|^{-\beta}\big]$ can be roughly bounded by the full gradient norm and hence resembling the full-batch AN-GD update.
Based on this idea, we formally propose the following independently-and-adaptively normalized SGD (IAN-SGD) algorithm (see \eqref{update rule}), where $A$, $\Gamma$,$\delta$ are positive constants, $\nabla f_{\xi}(w_t)$, $\nabla f_{\xi'}(w_t)$ corresponds to the stochastic gradient associated with two independent samples $\xi$, $\xi'$ sampled at time $t$.
\begin{align}
   \text{(IAN-SGD):} ~~ w_{t+1} = w_t - \gamma \frac{\nabla f_{\xi}(w_t)}{h_t^{\beta}}, ~~\nonumber\\
   \text{where} ~~ h_t = \max \Big\{1, \Gamma\cdot\Big({A\big \|\nabla f_{\xi'}(w_t)\big \|+ \delta}\Big) \Big \}\label{update rule}.
\end{align}
The above IAN-SGD algorithm adopts a clipping strategy for the normalization term $h_t$. Intuitively, when gradient converges in the later optimization phase, the term $h_t=1$ and IAN-SGD reduces to SGD. This is consistent with the fact that the generalized-smooth condition reduces to the standard smooth condition when gradient approaches to 0. Moreover, imposing a constant lower bound $\delta$ on $h_t$ in \eqref{update rule} helps develop the theoretical convergence analysis and avoid numerical instability in practice. As we show in the ablation study presented in the appendix, the convergence speed of IAN-SGD is insensitive to the choice of $\delta$.
We also note that IAN-SGD requires querying two batches of samples in every iteration. However, we show later in the Theorem \ref{Clip SGD converge} and experiments that these batch sizes can be chosen at a constant level in both theory and practice.

\subsection{Convergence Analysis of IAN-SGD}
We adopt the following bias assumption on the stochastic gradient $\nabla f_{\xi}(w)$.
\begin{assumption}[Unbiased stochastic gradient]\label{unbiasness}
The stochastic gradient $\nabla f_{\xi}(w)$ is unbiased, i.e.,\\ $\mathbb{E}_{\xi\sim \mathbb{P}}\big [\nabla f_{\xi}(w)\big] = \nabla F(w)$ for all $w\in\mathbf{R}^d$.
\end{assumption}
\begin{assumption}[Affine approximation Error]\label{assum6}
    There exists  $0\leq\tau_1<1, \tau_2>0$ such that for any $w\in \mathbf{R}^d$,
    \begin{align}
    \big \|\nabla f_{\xi}(w)-\nabla F(w)\| \leq \tau_1 \big \| \nabla F(w)\big \| + \tau_2 ~\text{ a.s. }~ \forall \xi \sim \mathbb{P}.
    \label{affinebound}
    \end{align}
\end{assumption}
Assumption~\ref{assum6} allows the approximation error of stochastic gradient scaling with the full gradient norm on a per-sample basis, while only requiring the error to remain bounded at stationary points. Compared to other noise assumptions, Assumption \ref{assum6} is much weaker than the almost sure error bound (i.e., $\tau_1 = 0$) adopted in \citet{zhang2020gradientclippingacceleratestraining,genralclip,mingruiclip} but stronger than expected error bound (i.e., bounded variance assumption $\mathbb{E}_{\xi}\|\nabla f(w)-\nabla F(w) \|^2 \leq \tau_2^2 $) adopted in \citet{usitchclip, reisizadeh2023variancereducedclippingnonconvexoptimization}. Our motivation for studying the convergence of IAN-SGD under this noise assumption is to strike a balance between achieving standard convergence rates (i.e., $\mathcal{O}(\epsilon^{-4})$) under relaxed noise conditions, while keeping the batch size at the $\Omega(1)$-level. As later we show in Theorem~\ref{Clip SGD converge}, IAN-SGD achieves $\mathcal{O}(\epsilon^{-4})$ sample complexity under Assumption~\ref{assum6}. 
\begin{remark}
    Existing work \citep{usitchclip} has shown that when adopting $\Omega(1)$ sample per iteration, clipped SGD attains $\mathcal{O}(\epsilon^{-5})$ sample complexities under generalized-smooth and bounded variance assumptions.
    In order to achieve the standard $\mathcal{O}(\epsilon^{-4})$ sample complexity under such assumption, \cite{reisizadeh2023variancereducedclippingnonconvexoptimization} demonstrated that clipped SGD requires $\Omega(\epsilon^{-2})$ samples per iteration. Through this way, the constant arising from the variance of the approximation error can be controlled at the $\mathcal{O}(\epsilon^2)$-level. This, in turn, relaxes the burden on iteration complexity, requiring only $\mathcal{O}(\epsilon^{-2})$ iterations for averaging during convergence proof. We leave technical discussions on optimizing the batch size while maintaining $\mathcal{O}(\epsilon^{-4})$ total sample complexity of IAN-SGD for future work.
\end{remark}
Based on these assumptions, we first obtain the following upper bounds regarding $\|\nabla F(w_t) \|$, $\|\nabla f_{\xi}(w_t) \|$. See Proof \ref{Proof of Lemma 2} in Appendix \ref{Appendix H}.
\begin{restatable}{lemma}{mainlemmatwo}
\label{stochastic gradient bound}
    
    Let Assumptions \ref{unbiasness} and \ref{assum6} hold. The gradient $\|\nabla F(w_t) \|$ and stochastic gradient $\nabla f_{\xi}(w_t)$ satisfy 
    \begin{align}
        &\|\nabla F(w_t) \|\leq \frac{1}{1-\tau_1}\|\nabla f_{\xi}(w_t) \|+\frac{\tau_2}{1-\tau_1},\text{and}\nonumber\\
        &\|\nabla f_{\xi}(w_t)\|\leq (1+\tau_1)\|\nabla F(w_t) \|+\tau_2.
    \end{align}
\end{restatable}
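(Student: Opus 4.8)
The plan is to prove the two inequalities in Lemma~\ref{stochastic gradient bound} directly from the affine approximation error bound in Assumption~\ref{assum6}, using a single triangle-inequality argument for each direction. Both bounds are really just algebraic rearrangements of the almost-sure affine bound $\|\nabla f_{\xi}(w_t) - \nabla F(w_t)\| \leq \tau_1 \|\nabla F(w_t)\| + \tau_2$, so the work is elementary and the main subtlety is bookkeeping the constant $0 \leq \tau_1 < 1$ so that the denominator $1-\tau_1$ stays positive.

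First I would establish the second inequality, $\|\nabla f_{\xi}(w_t)\| \leq (1+\tau_1)\|\nabla F(w_t)\| + \tau_2$. The idea is to add and subtract $\nabla F(w_t)$ inside the norm and apply the triangle inequality:
\begin{align}
    \|\nabla f_{\xi}(w_t)\| &= \|\nabla f_{\xi}(w_t) - \nabla F(w_t) + \nabla F(w_t)\| \nonumber\\
    &\leq \|\nabla f_{\xi}(w_t) - \nabla F(w_t)\| + \|\nabla F(w_t)\|. \nonumber
\end{align}
Substituting the affine bound from Assumption~\ref{assum6} for the first term gives $\|\nabla f_{\xi}(w_t)\| \leq \tau_1 \|\nabla F(w_t)\| + \tau_2 + \|\nabla F(w_t)\| = (1+\tau_1)\|\nabla F(w_t)\| + \tau_2$, which is exactly the claim. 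This direction uses no restriction on $\tau_1$ beyond nonnegativity.

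For the first inequality I would run the triangle inequality in the reverse direction to isolate $\|\nabla F(w_t)\|$. Writing $\|\nabla F(w_t)\| \leq \|\nabla F(w_t) - \nabla f_{\xi}(w_t)\| + \|\nabla f_{\xi}(w_t)\|$ and again invoking Assumption~\ref{assum6} (the bound is symmetric in the sign of the difference, so $\|\nabla F(w_t) - \nabla f_{\xi}(w_t)\| = \|\nabla f_{\xi}(w_t) - \nabla F(w_t)\| \leq \tau_1 \|\nabla F(w_t)\| + \tau_2$) yields $\|\nabla F(w_t)\| \leq \tau_1 \|\nabla F(w_t)\| + \tau_2 + \|\nabla f_{\xi}(w_t)\|$. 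The one genuinely load-bearing step is here: I would subtract $\tau_1 \|\nabla F(w_t)\|$ from both sides to get $(1-\tau_1)\|\nabla F(w_t)\| \leq \|\nabla f_{\xi}(w_t)\| + \tau_2$, and then divide by $1-\tau_1$, which is legitimate and preserves the inequality direction precisely because Assumption~\ref{assum6} guarantees $\tau_1 < 1$. This delivers $\|\nabla F(w_t)\| \leq \frac{1}{1-\tau_1}\|\nabla f_{\xi}(w_t)\| + \frac{\tau_2}{1-\tau_1}$.

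The main (and only) obstacle worth flagging is that the rearrangement in the first inequality is the sole place where the hypothesis $\tau_1 < 1$ is essential — if $\tau_1 \geq 1$ the factor $1-\tau_1$ would be nonpositive and the bound would be vacuous or reversed, so I would be careful to note that this is precisely why the assumption restricts $\tau_1$ to $[0,1)$. Both bounds hold almost surely for every $\xi$ since Assumption~\ref{assum6} is an almost-sure per-sample statement, so no expectation or probabilistic argument is needed; the lemma is purely a deterministic consequence of the affine noise bound.
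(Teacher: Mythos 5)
Your proposal is correct and follows essentially the same argument as the paper: the paper invokes the reverse triangle inequality $\big|\|\nabla F(w_t)\| - \|\nabla f_{\xi}(w_t)\|\big| \leq \|\nabla F(w_t) - \nabla f_{\xi}(w_t)\|$ and rearranges using $\tau_1 < 1$, which is exactly your pair of one-sided triangle-inequality applications written in a single compressed step. Your explicit flagging of where $\tau_1 < 1$ is load-bearing matches the paper's (implicit) use of the same fact.
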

Lemma \ref{stochastic gradient bound} indicates that the full gradient norm can be upper bounded by
the stochastic gradient plus a constant. This result suggests that for $h_t$ in \eqref{update rule}, one can choose $A=\frac{1}{\tau_1-1}$ and $\delta = \frac{\tau_2}{1-\tau_1}$, and it is very useful in our convergence analysis to effectively bound the stochastic gradient norm in $h_t$. We obtain the following convergence result of IAN-SGD. See Proof \ref{Proof of IAN-SGD} in Appendix \ref{Appendix G}

\begin{restatable}[Convergence of IAN-SGD]{theorem}{mainthmtwo}
\label{Clip SGD converge}
    Let Assumptions \ref{assum1}, \ref{unbiasness} and \ref{assum6} hold. For the IAN-SGD algorithm, choose learning rate 
     $\gamma = \min \{ \frac{1}{4L_0(2\tau_1^2+1)},\frac{1}{4L_1(2\tau_1^2+1)}, \frac{1}{\sqrt{T}}, \frac{1}{8L_1(2\tau_1^2+1)(2\tau_2/(1-\tau_1))^{\beta}}\}$, and $A=\frac{1}{1-\tau_1}$ $\delta = \frac{\tau_2}{1-\tau_1}$, $\Gamma=(4L_1\gamma(2\tau_1^2+1))^{\frac{1}{\beta}}$. Denote $\Lambda := F(w_0)-F^* + \frac{1}{2}(L_0+L_1)(1+ 4\tau_2^2)^2$. Then, {with probability at least $\frac{1}{2}$,} IAN-SGD produces a sequence satisfying $\min_{t \le T}\|\nabla F(w_t) \|\leq \epsilon$ if the total number of iteration $T$ satisfies  
     \begin{align}
                T \geq \Lambda \max \Big\{\frac{256\Lambda}{\epsilon^4}, \frac{64L_1(2\tau_1^2+1)(2+2\tau_1)^{\beta}}{(1-\tau_1)^{\beta}\epsilon^{2-\beta}}, (2\tau_1^2+1)\cdot\frac{64(L_0+L_1)+128L_1(2\tau_2/(1-\tau_1))^{\beta}}{\epsilon^2} \Big\}.
          \label{stochastic iteration complexity}
    \end{align}
\end{restatable}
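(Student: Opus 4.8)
The plan is to run a descent-lemma argument adapted to the IAN-SGD update, using the fact (from Lemma \ref{stochastic gradient bound}) that the full gradient norm is controlled by the stochastic gradient norm plus a constant, which lets us tame the denominator $h_t$. First I would apply the generalized-smooth descent lemma (Lemma \ref{descent lemma}) to the update $w_{t+1} = w_t - \gamma \nabla f_\xi(w_t)/h_t^\beta$, writing
\begin{align}
F(w_{t+1}) \leq F(w_t) - \frac{\gamma}{h_t^\beta}\langle \nabla F(w_t), \nabla f_\xi(w_t)\rangle + \frac{\gamma^2}{2 h_t^{2\beta}}(L_0 + L_1\|\nabla F(w_t)\|^\alpha)\|\nabla f_\xi(w_t)\|^2. \nonumber
\end{align}

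\noindent The crucial move is to take the conditional expectation over $\xi$ while treating $\xi'$ (hence $h_t$) as fixed, using independence: since $h_t$ depends only on $\xi'$, the cross term becomes $-\tfrac{\gamma}{h_t^\beta}\langle \nabla F(w_t), \mathbb{E}_\xi[\nabla f_\xi(w_t)]\rangle = -\tfrac{\gamma}{h_t^\beta}\|\nabla F(w_t)\|^2$ by Assumption \ref{unbiasness}. This is exactly what the independent sampling scheme buys us and is the whole point of the IAN-SG estimator. Next I would bound $h_t$ from above and below: with the prescribed $A = \tfrac{1}{1-\tau_1}$ and $\delta = \tfrac{\tau_2}{1-\tau_1}$, Lemma \ref{stochastic gradient bound} gives $A\|\nabla f_{\xi'}(w_t)\| + \delta \geq \|\nabla F(w_t)\|$, so the progress term $\tfrac{\gamma}{h_t^\beta}\|\nabla F(w_t)\|^2$ can be lower-bounded in terms of $\|\nabla F(w_t)\|^{2-\beta}$ and constants once we substitute $\Gamma = (4L_1\gamma(2\tau_1^2+1))^{1/\beta}$ and expand $h_t^\beta = \Gamma^\beta(A\|\nabla f_{\xi'}(w_t)\|+\delta)^\beta$ on the clipped branch.

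\noindent For the noise/second-order term I would use Assumption \ref{assum6} to replace $\|\nabla f_\xi(w_t)\|^2$ by $(1+\tau_1)^2\|\nabla F(w_t)\|^2 + \text{(lower order in }\tau_2)$, and control $\|\nabla F(w_t)\|^\alpha$ against $h_t$ via the same affine bound. The learning-rate choice $\gamma \leq \min\{\tfrac{1}{4L_0(2\tau_1^2+1)}, \tfrac{1}{4L_1(2\tau_1^2+1)}, \dots\}$ is engineered so that the coefficient of $\|\nabla F(w_t)\|^2$ coming from the quadratic term is dominated by half the progress term, leaving a clean per-step inequality of the form $\mathbb{E}_\xi[F(w_{t+1})] \leq F(w_t) - c_1\gamma\|\nabla F(w_t)\|^{2} + c_2\gamma^2 + (\text{a }\|\nabla F(w_t)\|^{2-\beta}\text{ remainder})$ after taking expectation over $\xi'$ as well. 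Telescoping from $0$ to $T$, dividing by $T$, and plugging $\gamma = 1/\sqrt{T}$ into the accumulated constants yields a bound on $\tfrac{1}{T}\sum_t \mathbb{E}\|\nabla F(w_t)\|^2$; the three terms in the max over $T$ in \eqref{stochastic iteration complexity} correspond respectively to balancing the $c_2\gamma^2 T = \Lambda$ variance contribution, the $\|\nabla F(w_t)\|^{2-\beta}$ remainder, and the smoothness constants. Finally I would convert the average-squared-gradient bound into the $\min_{t\le T}\|\nabla F(w_t)\| \leq \epsilon$ high-probability statement: since the running minimum is at most the average, a Markov-type argument on the nonnegative quantity $\min_t \|\nabla F(w_t)\|^2$ delivers the ``with probability at least $\tfrac{1}{2}$'' conclusion.

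\noindent The main obstacle I anticipate is the simultaneous control of the random denominator $h_t$ in both the progress term and the quadratic term while respecting the conditioning structure: because $h_t$ is random through $\xi'$, I must be careful to take the $\xi$-expectation first (exploiting independence to unbias the numerator) and only afterwards handle the expectation over $\xi'$, and the clipping $\max\{1, \cdot\}$ forces a two-branch case analysis (the regime $h_t = 1$ where IAN-SGD reduces to SGD versus the active-normalization regime) that must be reconciled into a single inequality. Getting the constants to line up so that $\Gamma$, $A$, $\delta$, and the $\gamma$-terms cancel into exactly the stated bound — rather than merely an order-optimal $\mathcal{O}(\epsilon^{-4})$ — will be the most delicate bookkeeping.
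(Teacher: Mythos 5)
Your proposal is correct and follows essentially the same route as the paper's proof: the generalized-smooth descent lemma combined with the independence of $\xi'$ to unbias the cross term, the affine-noise bound on $\mathbb{E}_\xi\big[\|\nabla f_\xi(w_t)\|^2\big]$, the clipping structure and step-size choices to make the quadratic term at most half the progress term, telescoping, and a Markov argument to convert the expectation bound into the probability-$\tfrac{1}{2}$ statement. One caution on your phrasing: after eliminating the randomness in $h_t$, the per-step progress must be kept as $\min\big\{c_1\gamma\|\nabla F(w_t)\|^2,\; c_3\|\nabla F(w_t)\|^{2-\beta}\big\}$ (the paper then splits the iterations into the two regimes and uses that one of the two index sets has at least $T/2$ elements), not as an additive $\|\nabla F(w_t)\|^{2-\beta}$ remainder on the right-hand side as your displayed template suggests --- though your later discussion of reconciling the two clipping branches indicates you intend exactly this structure.
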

\begin{remark}
Existing work \citep{zhang2020gradientclippingacceleratestraining} studied the iteration complexity of clipped SGD (i.e., $\beta=1$ and $\xi' = \xi$, without independence sampling) in the special bounded noise setting $\tau_1 = 0$, and obtained the complexity 
\begin{align}
    T\geq \Lambda \max\Big\{ \frac{4 \Lambda}{\epsilon^4},\frac{128 L_1}{\epsilon}, \frac{80 L_0 + 512 L_1 \tau_2}{\epsilon^2} \Big\},
\end{align}
where $\Lambda  = \big( F(x_0) - F^* + (5L_0 + 2L_1 \tau_2) \tau_2^2 + 9 \tau_2{L_0^2}/{L_1} \big).
$
As a comparison, the above Theorem \ref{Clip SGD converge} establishes a comparable complexity result under the more general noise model ($\tau_1 \geq 0$) in Assumption \ref{assum6}, by leveraging the adaptive gradient normalization and independent sampling of IAN-SGD. In particular, in the special setting of $\tau_1=0, \beta=1$, these two complexity results match up to absolute constant coefficients. 

Another existing work \citet{genralclip} analyzed a generalized clipping framework that combines momentum acceleration and gradient clipping, namely
$
w_{t+1} = w_t - \big( \nu \min\big\{c, \tfrac{\gamma}{\|m_t\|} \big\} m_t + (1 - \nu) \min\big\{c, \tfrac{\gamma}{\|\nabla f_{\xi}(w_t)\|} \big\} \nabla f_{\xi}(w_t) \big),
$
under a bounded noise setting (i.e., $\tau_1 = 0$). They establish a complexity of $\mathcal{O}\big((F(w_0) - F^*) \tau_2^2 L_0 / \epsilon^4\big)$ when $\gamma / c = \Theta(\tau_2)$, where $m_t$ denotes the momentum term and $c$ is the clipping threshold. This framework enables them to construct a recursion based directly on $\|\nabla F(w_t)\|$, whereas our analysis is built around recursive bounds involving $\|\nabla F(w_t)\|^2$ or $\|\nabla F(w_t)\|^{2 - \beta}$ at each step. Consequently, while both approaches yield the same order of convergence in terms of $\epsilon$, the complexity expressions slightly differ in the numerators due to different recursions.

\end{remark}
Compared to the existing studies \citep{zhang2020gradientclippingacceleratestraining,zhangLsmoothclip, reisizadeh2023variancereducedclippingnonconvexoptimization,genralclip, usitchclip} on gradient clipping algorithms, Theorem \ref{Clip SGD converge} establishes convergence guarantees without the bounded approximation error assumption and the use of extremely large batch sizes that depend on the target error $\epsilon$.
Through numerical experiments in Section \ref{sec: exp} and ablation study in \ref{sec: effects of Batch Size} later, we show that it suffices to query a small number of independent samples for IAN-SGD in practice.
\subsubsection{Proof outline and novelty} 
The independent sampling strategy adopted by IAN-SGD naturally decouples stochastic gradient from gradient norm normalization, making it easier to achieve the desired optimization progress in generalized-smooth optimization under relaxed conditions.
By the descent lemma, we have that
\begin{align}
    &\mathbb{E}_{\xi,\xi'}\big [F(w_{t+1}) -F(w_t)|w_t\big]
    \nonumber\\
    \overset{(i)}{\leq}&\mathbb{E}_{\xi'}\Big[-\frac{ \gamma \big \| \nabla F(w_t)\big \|^2}{h_t^{\beta}}+\frac{\gamma^2(L_0
    + L_1\big \|\nabla F(w_t) \big \|^\alpha)}{2h_t^{2\beta}} \mathbb{E}_{\xi}\big [\big \|\nabla f_{\xi}(w_t)\big \|^2 \big]|w_t\Big] \nonumber \\
    \overset{(ii)}{\leq}& \mathbb{E}_{\xi'}\Big[ \Big(\frac{\gamma}{h_t^{\beta}}\big(-1+\gamma(2\tau_1^2+1) \frac{L_0+L_1 \big \|\nabla F(w_t) \big \|^{\alpha}}{h_t^{\beta}}\big)\Big) \|\nabla F(w_t) \big \|^2 +\gamma^2\frac{2\tau_2^2(L_0+L_1 \big \|\nabla F(w_t) \big \|^{\alpha})}{h_t^{2\beta}}|w_t\Big],
    \label{stochastic descent lemma main}
\end{align}
where (i) follows from the descent lemma (conditioned on $w_t$) and the independence between $\xi$ and $\xi'$; (ii) leverages Assumption \ref{assum6} to bound the second moment $\mathbb{E}_{\xi}[\|\nabla f_{\xi}(w_t) \|^2 ]$ by 
$(4\tau_1^2+2) \|\nabla F(w_t) \|^2+ 4\tau_2^2$.
Then, for the first term in \eqref{stochastic descent lemma main}, we leverage the clipping structure of $h_t$ to bound
the coefficient $\gamma(2\tau_1^2+1)(L_0+L_1 \|\nabla F(w) \|^{\alpha})/h_t^{\beta}$ by $\frac{1}{2}$. For the second term in \eqref{stochastic descent lemma main}, we again leverage the clipping structure of $h_t$ and consider two complementary cases: when $\|\nabla F(w_t) \|\leq \sqrt{1+4\tau_2^2/(2\tau_1^2+1})$, this term can be upper bounded by $\frac{1}{2}\gamma^2(L_0+L_1)(1+4\tau_2^2)^2$; when $\|\nabla F(w_t) \|> \sqrt{1+4\tau_2^2/(2\tau_1^2+1)}$, this term can be upper bounded by $\frac{\gamma}{4h_t^{\beta}}\|\nabla F(w_t) \|^2$. Summing them up gives the desired bound. We refer to Lemma \ref{upper bound for term 1} in the appendix for more details.
Substituting these bounds into \eqref{stochastic descent lemma main} and rearranging the terms yields that
\begin{align}
   \mathbb{E}_{\xi'}\big[\frac{\gamma}{4h_t^{\beta}} \big \|\nabla F(w_t) \big \|^2\big]  \leq& \mathbb{E}_{\xi,\xi'}\big[F(w_t) - F(w_{t+1})\big] + \frac{1}{2}(L_0+L_1 )\gamma^2(1+4\tau_2^2)^2.
    \nonumber 
\end{align}
Furthermore, by leveraging the clipping structure of $h_t^{\beta}$ and Assumption \ref{assum6}, the left hand side of above inequality can be lower bounded as 
$\frac{\gamma\| \nabla F(w_t) \|^2}{h_t^{\beta}} \geq \min \{\gamma \|\nabla F(w_t) \|^2, \frac{\|\nabla F(w_t) \|^{2-\beta}}{4((2\tau_2+2)/1-\tau_1)^{\beta}L_1(2\tau_1^2+1)}\}.$
Finally, telescoping the above inequalities over $t$ and taking expectation leads to the desired bound in \eqref{stochastic iteration complexity}. 

As a comparison, in the previous works on clipped SGD \citep{zhang2020gradientclippingacceleratestraining,genralclip}, their stochastic gradient and normalization term $h_t$ depend on the same mini-batch of samples, and therefore cannot be treated separately in the analysis. For example, their analysis proposed the following decomposition.
\begin{align}
    \mathbb{E}_{\xi} \Big[\frac{\|\nabla f_{\xi}(w_t)\|^2 }{h_t^{2\beta}}\Big] =\mathbb{E}_{\xi}\Big[\Big(\| \nabla F(w_t)\|^2+\|\nabla f_{\xi}(w_t)- \nabla F(w_t) \|^2 +2\langle\nabla F(w_t), \nabla f_{\xi}(w_t)-\nabla F(w_t) \rangle\Big)/{h_t^{2\beta}}\Big]. \nonumber 
\end{align}
Hence their analysis needs to assume a constant upper bound for the approximation error $\| \nabla f_{\xi}(w_t)-\nabla F(w_t)\| $ in order to obtain a comparable bound to ours.

\subsubsection{IAN-SGD under generalized P{\L} condition}
Under Assumption \ref{assum2}, we obtain the following recursion for IAN-SGD. See Appendix \ref{Appendix J} for Proof details.
\begin{restatable}[Descent inequality of IAN-SGD under generalized P{\L} condition]{lemma}{mainlemmathree}\label{stochastic PL descent}
        Let Assumptions \ref{assum1}, \ref{assum2}, \ref{unbiasness} and \ref{assum6} hold. For the IAN-SGD algorithm, choose target error $0<\epsilon \leq \min\{1,1/2\mu\}$ and learning rate 
     $\gamma =(2\mu\epsilon)^{\frac{4-2\beta}{\rho}}\cdot \min \{ \frac{1}{4L_0(2\tau_1^2+1)},\frac{1}{4L_1((2\tau_1+2)/(1-\tau_1))(2\tau_1^2+1)}, \frac{1}{8L_1(2\tau_1^2+1)(2\tau_2/(1-\tau_1))^{\beta}},\frac{L_1(2\tau_1^2+1)}{16((L_0+L_1)(1+4\tau_2^2)+L_1(\tau_1^2+1/2))^2}\}$, and $A=\frac{1}{1-\tau_1}$ $\delta = \frac{\tau_2}{1-\tau_1}$, $\Gamma=(4L_1\gamma(2\tau_1^2+1))^{\frac{1}{\beta}}$. Then, we have the following descent inequality 
     \begin{align}
             \mathbb{E}_{w_{t+1}}[\Delta_{t+1}]\leq \mathbb{E}_{w_t}[\Delta_t - \frac{\gamma^{3/2}(L_1(2\tau_1^2+1))^{1/2}(2\mu\Delta_t)^{(2-\beta)/\rho}}{4}+ \frac{\gamma^{3/2}(L_1(2\tau_1^2+1))^{1/2}(2\mu\epsilon)^{(2-\beta)/\rho}}{8}].
             \label{eq: IAN-SGD PL recursion}
     \end{align}
\end{restatable}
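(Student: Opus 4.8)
The plan is to establish the descent inequality \eqref{eq: IAN-SGD PL recursion} by building on the convergence machinery already developed for Theorem \ref{Clip SGD converge}, then invoking the generalized P{\L} condition in Assumption \ref{assum2} to convert gradient-norm bounds into function-value-gap bounds. First I would start from the per-step descent estimate derived in the proof outline of Theorem \ref{Clip SGD converge}, namely the rearranged inequality
\begin{align}
   \mathbb{E}_{\xi'}\Big[\frac{\gamma}{4h_t^{\beta}} \big \|\nabla F(w_t) \big \|^2\Big]  \leq \mathbb{E}_{\xi,\xi'}\big[F(w_t) - F(w_{t+1})\big] + \tfrac{1}{2}(L_0+L_1)\gamma^2(1+4\tau_2^2)^2,\nonumber
\end{align}
together with the lower bound on the left-hand side furnished by the clipping structure of $h_t^{\beta}$,
\begin{align}
\frac{\gamma\| \nabla F(w_t) \|^2}{h_t^{\beta}} \geq \min \Big\{\gamma \|\nabla F(w_t) \|^2, \frac{\|\nabla F(w_t) \|^{2-\beta}}{4((2\tau_2+2)/(1-\tau_1))^{\beta}L_1(2\tau_1^2+1)}\Big\}.\nonumber
\end{align}
The key idea is that the new learning-rate choice $\gamma = (2\mu\epsilon)^{(4-2\beta)/\rho}\cdot\min\{\cdots\}$ is calibrated so that, under the generalized P{\L} condition, both the linear-drift term and the residual constant term can be re-expressed in terms of powers of $\Delta_t$ and $\epsilon$.

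Next I would apply Assumption \ref{assum2}, which states $\|\nabla F(w_t)\|^\rho \geq 2\mu\Delta_t$, i.e. $\|\nabla F(w_t)\|^{2-\beta}\geq (2\mu\Delta_t)^{(2-\beta)/\rho}$. Substituting this into the second branch of the $\min$ (the relevant regime for the target geometry) converts the gradient term $\|\nabla F(w_t)\|^{2-\beta}$ into $(2\mu\Delta_t)^{(2-\beta)/\rho}$, producing the characteristic drift factor $\tfrac{\gamma^{3/2}(L_1(2\tau_1^2+1))^{1/2}}{4}(2\mu\Delta_t)^{(2-\beta)/\rho}$ that appears in \eqref{eq: IAN-SGD PL recursion}. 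The exponent $3/2$ and the square-root prefactor $(L_1(2\tau_1^2+1))^{1/2}$ arise precisely because the learning-rate scaling $\gamma^{(4-2\beta)/\rho}$ inside the denominator $((2\tau_2+2)/(1-\tau_1))^\beta L_1(2\tau_1^2+1)$ interacts with the leading $\gamma$ factor; I would track these powers carefully so that the $\gamma$ and $(2\mu)$ exponents on the gradient drift term match those claimed. For the residual constant term $\tfrac{1}{2}(L_0+L_1)\gamma^2(1+4\tau_2^2)^2$, the fourth and last entry in the $\min$ defining $\gamma$ is designed exactly so that this term is dominated by $\tfrac{\gamma^{3/2}(L_1(2\tau_1^2+1))^{1/2}}{8}(2\mu\epsilon)^{(2-\beta)/\rho}$; verifying this domination reduces to checking that $\gamma^{1/2}$ times the various constants is bounded appropriately, which the fourth bound $\gamma \le L_1(2\tau_1^2+1)/(16(\cdots)^2)$ guarantees.

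The main obstacle I anticipate is the bookkeeping of the exponents and constants: one must confirm that the specific learning rate, when raised to the relevant powers, simultaneously (a) keeps the clipping-based coefficient bound $\gamma(2\tau_1^2+1)(L_0+L_1\|\nabla F\|^\alpha)/h_t^\beta \le \tfrac12$ valid as in the base theorem, (b) produces exactly the factor $\gamma^{3/2}(L_1(2\tau_1^2+1))^{1/2}$ on the drift and residual terms, and (c) makes the residual term at most half the drift-at-$\epsilon$ term so that the net coefficient $\tfrac18$ (rather than $\tfrac14$) survives. The subtlety is that the first branch $\gamma\|\nabla F(w_t)\|^2$ of the $\min$ must be shown to be inactive or to yield a stronger inequality in the regime $\Delta_t \ge \epsilon$ governing convergence; I would handle this by verifying that the learning-rate calibration forces the second branch to be the binding one whenever $\|\nabla F(w_t)\|$ is in the range dictated by the P{\L} inequality, so that the recursion \eqref{eq: IAN-SGD PL recursion} holds uniformly. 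Once these constant-chasing steps are in place, the descent inequality follows by taking total expectation over $w_t$ and collecting terms.
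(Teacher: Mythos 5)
There is a genuine gap in your proposal, and it sits exactly at the step you flagged as a ``subtlety.'' You plan to apply the P{\L} condition to the second branch of the lower bound $\min\big\{\gamma\|\nabla F(w_t)\|^2,\ \|\nabla F(w_t)\|^{2-\beta}/(4(2C_1)^{\beta}L_1(2\tau_1^2+1))\big\}$ and to argue that the learning-rate calibration forces this second branch to be the binding one. This cannot work, for two reasons. First, the second branch contains no factor of $\gamma$ at all, so applying Assumption \ref{assum2} to it would produce a drift term of the form $(2\mu\Delta_t)^{(2-\beta)/\rho}/(4(2C_1)^{\beta}L_1(2\tau_1^2+1))$ --- the characteristic coefficient $\gamma^{3/2}(L_1(2\tau_1^2+1))^{1/2}$ in \eqref{eq: IAN-SGD PL recursion} can never arise this way; your heuristic explanation that the $3/2$ exponent comes from ``the learning-rate scaling interacting with the leading $\gamma$ factor'' does not correspond to any actual computation. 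Second, the binding branch of the $\min$ is determined by whether $\|\nabla F(w_t)\|^{\beta}\le 1/(4\gamma(2C_1)^{\beta}L_1(2\tau_1^2+1))$, and since $\gamma\propto(2\mu\epsilon)^{(4-2\beta)/\rho}$ is tiny, this threshold is enormous; meanwhile the P{\L} condition supplies only the \emph{lower} bound $\|\nabla F(w_t)\|\ge(2\mu\Delta_t)^{1/\rho}$, which is of order $(2\mu\epsilon)^{1/\rho}$ near the stopping criterion. So in the relevant regime the \emph{first} branch $\gamma\|\nabla F(w_t)\|^2$ is typically the active one, and no choice of $\gamma$ can exclude it.

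The idea you are missing is how the paper handles that first branch: it applies the technical inequality \eqref{Technical Lemma's inequality} (Lemma 5 of \citet{pmlr-v202-chen23ar}, already used in Appendix \ref{Appendix D}) with $C=(L_1\gamma(2\tau_1^2+1))^{\beta/2}$, $x=\|\nabla F(w_t)\|$, $w=2-\beta$, $w'=2$, $\Delta=\beta$, which gives
\begin{align}
\gamma\|\nabla F(w_t)\|^{2}\;\ge\;\gamma\,(L_1\gamma(2\tau_1^2+1))^{\beta/2}\|\nabla F(w_t)\|^{2-\beta}-\gamma^{2}L_1(2\tau_1^2+1)
\;\ge\;\gamma^{3/2}(L_1(2\tau_1^2+1))^{1/2}\|\nabla F(w_t)\|^{2-\beta}-\gamma^{2}L_1(2\tau_1^2+1),
\end{align}
the last step using $L_1\gamma(2\tau_1^2+1)<1$ and $\beta\le 1$. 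This is precisely where the $\gamma^{3/2}$ and the square-root prefactor originate. After pulling the constant out of the $\min$ (via $\min\{a-b,c\}\ge\min\{a,c\}-b$) and noting $\gamma^{3/2}(L_1(2\tau_1^2+1))^{1/2}\le\gamma\le 1/(4L_1(2C_1)^{\beta}(2\tau_1^2+1))$, both branches are now multiples of $\|\nabla F(w_t)\|^{2-\beta}$ and the $\min$ resolves to the transformed first branch uniformly --- no case analysis on which branch is active is needed. Only then is the P{\L} condition applied, and the additional residual $\gamma^{2}L_1(2\tau_1^2+1)$ from the technical lemma must be absorbed, \emph{together with} the noise term $\tfrac12(L_0+L_1)\gamma^{2}(1+4\tau_2^2)^2$, by the fourth entry of the learning-rate minimum; your proposal accounts only for the noise term. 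You would also need to treat the corner case $\alpha=\beta=0$ separately, as the paper does, since the technical lemma is invoked with $\Delta=\beta>0$.
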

We notice that \eqref{eq: IAN-SGD PL recursion} is almost the same as the recursion of AN-GD obtained in Appendix \ref{Appendix D}. The main difference between \eqref{eq: IAN-SGD PL recursion} and \eqref{descentPL} is the usage of $\gamma^{\frac{3}{2}}$ instead of $\gamma$ to characterize the recursion. As a result, the convergence rate of IAN-SGD is expected to be much slower than AN-GD, i.e., IAN-SGD converge with rate $\tilde{\mathcal{O}}({\epsilon}^{-\frac{3\beta}{\rho}})$ when $\rho+\beta=2$; IAN-SGD converge with rate $\mathcal{O}({\epsilon}^{-\frac{3(2-\rho)}{\rho}})$ when $\rho+\beta<2$. We omit the discussions of proof as its convergence analysis exactly follows the proof presented in Appendix \ref{Appendix E}.

\section{Experiments}\label{sec: exp}
We conduct numerical experiments to compare IAN-SGD with other state-of-the-art stochastic algorithms, including the standard SGD \citep{ghadimi2013stochastic}, normalized SGD (NSGD), clipped SGD \citep{zhang2020gradientclippingacceleratestraining}, SPIDER \citep{fang2018spidernearoptimalnonconvexoptimization}, normalized SGD with momentum \citep{NSGDm}(NSGDm) etc. Compared with previous works that study clipped SGD \citep{zhang2020gradientclippingacceleratestraining,genralclip,zhangLsmoothclip,usitchclip,reisizadeh2023variancereducedclippingnonconvexoptimization}, the main change of IAN-SGD lies in its usage of independent sampling and adaptive normalization. To empirically validate the effect of adaptive normalization in accordance with Theorem~\ref{beta GD convergence}, our experiments focus on nonconvex phase retrieval, distributionally robust optimization (DRO), and deep neural network training. Prior works \citep{PhaseretrievalPL,jin2021nonconvex,genPL1,genPL2} have shown that phase retrieval and deep neural networks satisfy generalized P{\L} conditions under mild assumptions. Accordingly, we evaluate the convergence of all algorithms in terms of total sample complexity (for phase retrieval and DRO) and number of epochs (for deep neural networks). Furthermore, to align the DRO formulation with our theoretical assumptions, we focus on regression tasks and adopt the $\chi^{2}$‑divergence to model distributional uncertainty. All experiments were conducted on a PC computer equipped with a $24$‑core CPU, $32$GB of RAM, and a single NVIDIA RTX$4090$ GPU, running Python 3.8.

\subsection{Nonconvex Phase Retrieval}

The phase retrieval problem arises in optics, signal processing, and quantum mechanics \citep{PhaseRetrieval1}. The goal is to recover a signal from measurements where only the intensity is known, and the phase information is missing or difficult to measure. 
Specially, denote the underlying object as $w\in \mathbf{R}^d$. Suppose we take $m$ intensity measurements $y_r = |a_r^Tw |^2 + n_r$ for $r=1\cdots m$, where $a_r$ denotes the measurement vector and $n_r$ is the additive noise. We aim to reconstruct $w$ by solving the following regression problem, 
\begin{align}
    \min_{w\in \mathbf{R}^d}f(w)=\frac{1}{2m}\sum_{r=1}^{m} \big(y_r-\big|a_r^Tw \big|^2 \big)^2.
    \label{noncvxphaseretrieval}
\end{align}
Such nonconvex function satisfies generalized-smooth with parameter $\alpha = \frac{2}{3}$. 
In this experiment, we generate the initialization $w_0\sim \mathcal{N}(1,6)$ and the underlying true object $w^*\sim \mathcal{N}(0,0.5)$ with dimension $d=100$. We take 
$m=3$k measurements with {$a_r\sim \mathcal{N}(0,0.5)$ } and $n_r\sim \mathcal{N}(0,16)$. 
We implement all of the stochastic algorithms in original form as described in previous literature. We set batch size $|B|=64$, and for IAN-SGD, we choose a small independent batch size $|B'|=4$. We fine-tuned learning rate for all algorithms, i.e., $\gamma = 5e^{-5}$ for SGD, $\gamma = 0.2$ for NSGD and NSGD with momentum, $\gamma = 0.6$ for clipped SGD, and $\gamma=0.25$ for SPIDER and IAN-SGD. We set the maximal gradient clipping constant as $20$, $\delta = 1e^{-3}$ for both clipped SGD and IAN-SGD. And we set normalization parameter $\beta=\frac{2}{3}$. Figure \ref{PhaseRetrieval+DRO} (left) shows the comparison of objective value versus sample complexity. It can be observed that IAN-SGD consistently converges faster than other baseline algorithms. 
\begin{figure}[htbp]
    \centering
    \begin{subfigure}[b]{0.46\textwidth}
        \centering
        \includegraphics[width=1.0\linewidth]{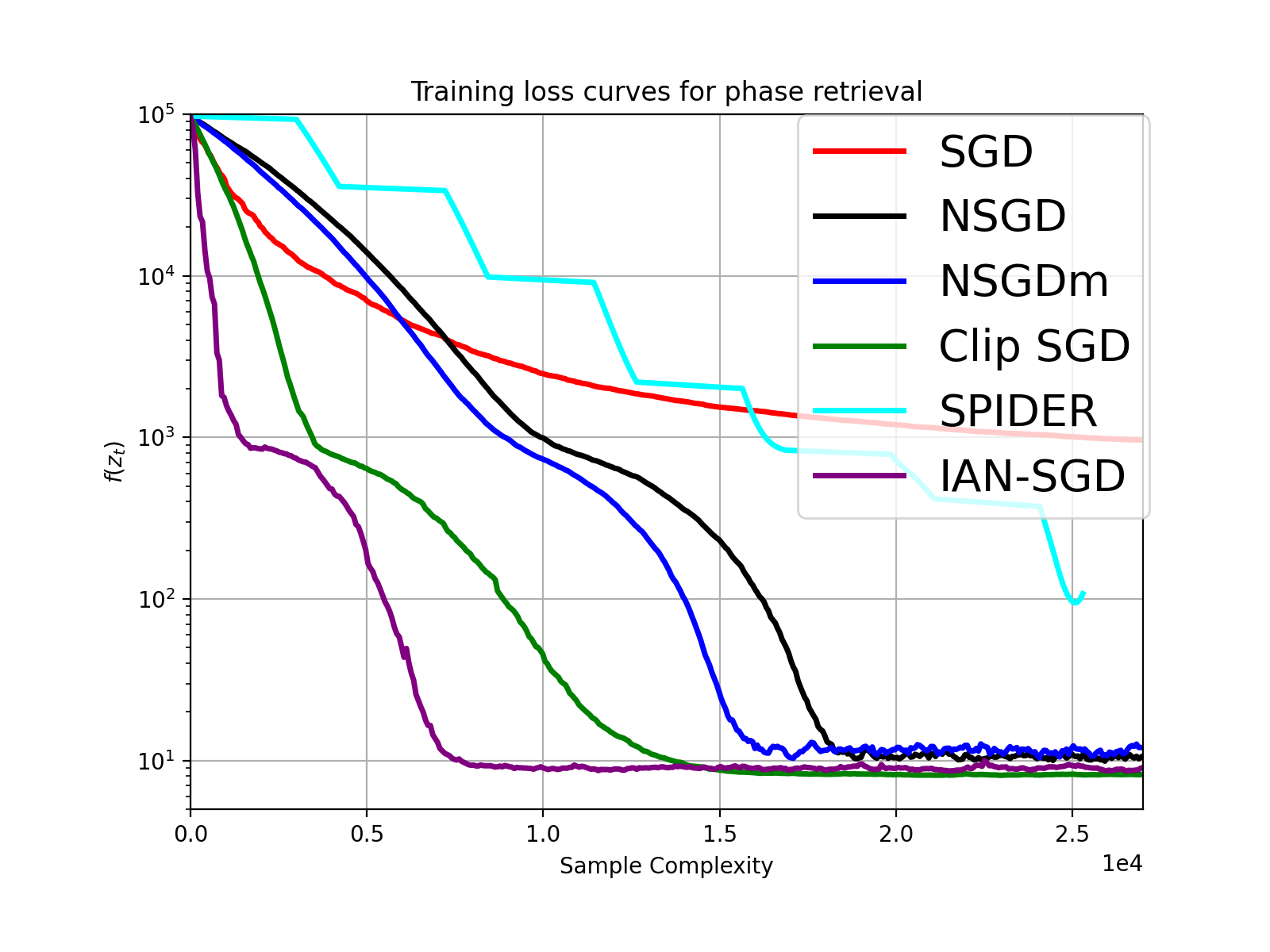}
    \end{subfigure}
    \hfill
    \begin{subfigure}[b]{0.46\textwidth}
        \centering
        \includegraphics[width=1.0\linewidth]{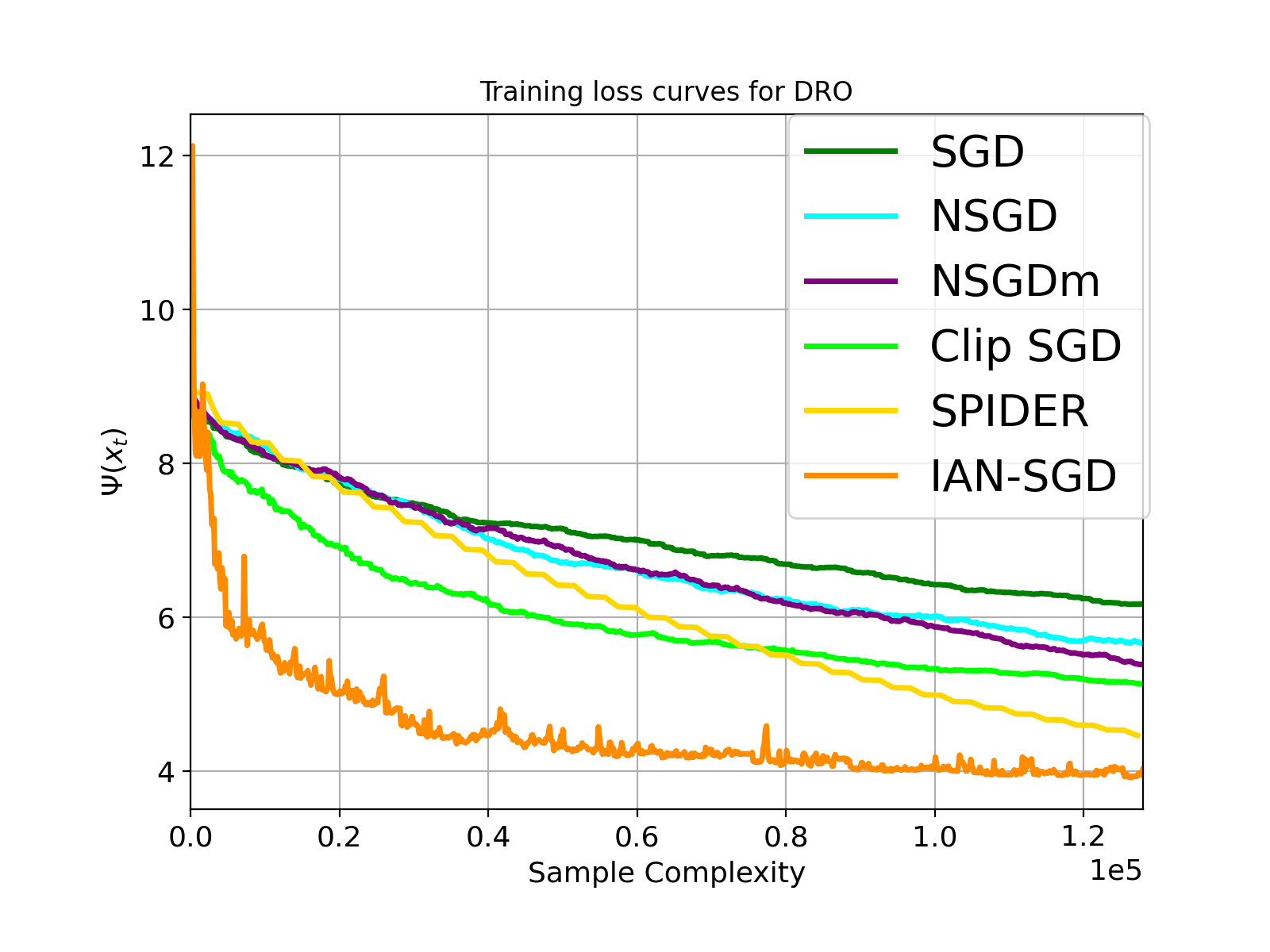}
    \end{subfigure}
    \caption{Experimental Results on Phase Retrieval and DRO}
    \label{PhaseRetrieval+DRO}
\end{figure}
\subsection{Distributionally-Robust Optimization}
Distributionally-robust optimization (DRO) is a popular approach to enhance robustness against data distribution shift. We consider the regularized DRO problem 
$
    \min_{w\in \mathbf{R}^d} f(w)=\sup_{\mathbb{Q}} \big\{\mathbb{E}_{\xi\sim \mathbb{Q}}[\ell_{\xi}(w)]-\lambda \phi(\mathbb{P};\mathbb{Q})\big \},
$
where $\mathbb{Q}$, $\mathbb{P}$ represents the underlying distribution and the nominal distribution respectively. $\lambda$ denotes a regularization hyper-parameter and $\phi$ denotes $\phi$-divergence metric. Under mild technical assumptions, \citet{jin2021nonconvex} showed that such a problem has the following equivalent dual formulation
\begin{align}
    \min_{w\in \mathbf{R}^d,\eta\in \mathbf{R}} L(w,\eta) = \lambda \mathbb{E}_{\xi \sim P} \phi^*\Big(\frac{\ell_{\xi}(w)-\eta}{\lambda}\Big)+\eta,
    \label{dualnoncvxDRO}
\end{align}
where $\phi^*$ denotes the convex conjugate function of $\phi$. In particular, such dual objective function satisfies the generalized-smooth condition \ref{eq: gs} with parameter $\alpha = 1$ \citep{jin2021nonconvex,pmlr-v202-chen23ar}. In this experiment, we use the Life Expectancy data {\citep{life_expectancy_data}} for regression task. For objective \eqref{dualnoncvxDRO}, we set $\lambda = 0.01$, select $\phi^*(t)= \frac{1}{4}(t+2)_+^2-1$, which is the convex conjugate of $\chi^2$-divergence. We adopt the regularized loss $\ell_{\xi}({w})=\frac{1}{2}(y_{\xi}-{x}_{\xi}^{\top}{w})^2+0.1 \sum_{j=1}^{34} \ln (1+|w^{(j)}|)$, where $x_{\xi},y_{\xi}$ represent input and output data, respectively. And we optimize the primal and dual variable of $L(w,\eta)$ in parallel following conclusion drawn from \citet{jin2021nonconvex}.

For the stochastic momentum moving average parameter used for acceleration methods, we set it as $0.1$ and $0.25$ for NSGD with momentum and SPIDER respectively. For stochastic algorithms without usage of multiple mini-batches, i.e., SGD, NSGD, NSGD with momentum and clipped SGD, we set their batch sizes as $|B|=128$. For SPIDER, we set $|B|=128$ and $|B'|=2313$, where the algorithm will conduct a full-gradient computation after every $15$ iterations. For IAN-SGD, we set the batch size for two batch samples as $|B| = 128$ and $|B'| = 8$. We fine-tuned the learning rate for each algorithm, i.e., $\gamma = 4e^{-5}$ for SGD, $\gamma = 5e^{-3}$ for NSGD, NSGD with momentum and SPIDER, $\gamma = 0.11$ for clipped SGD and IAN-SGD. We set the $\delta = 1e^{-1}$, maximal gradient clipping constant as $30$, $25$ for clipped SGD and IAN-SGD respectively. And we set normalization parameter $\beta=\frac{2}{3}$. Figure \ref{PhaseRetrieval+DRO} (right) shows the comparison of objective value versus sample complexity. It can be observed that objective value optimized by IAN-SGD consistently converges faster than other baselines algorithms.
\subsection{Deep Neural Networks}

\begin{figure}[htbp]
\centering
\begin{subfigure}{0.45\textwidth}
  \includegraphics[width=1\linewidth]{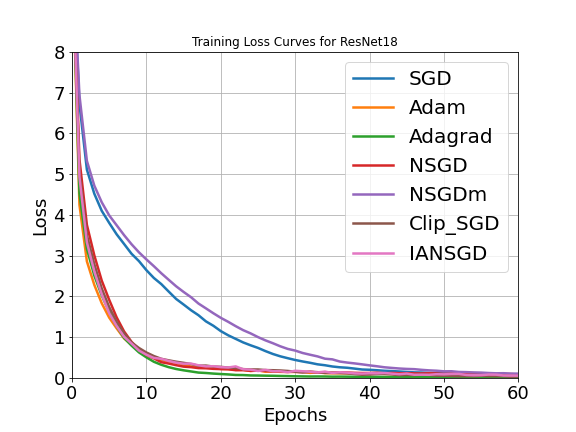}
\end{subfigure}
\hfill
\begin{subfigure}{0.45\textwidth}
  \includegraphics[width=1\linewidth]{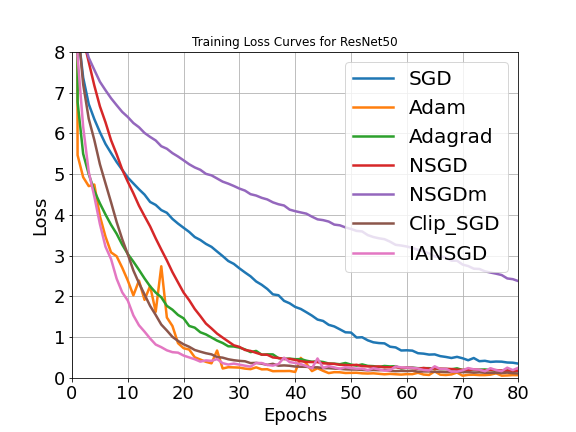}
\end{subfigure}
\caption{Experimental Result on training ResNet18, ResNet50.}
\label{AddDNN1}
\vspace{-2mm}
\end{figure}
According to \citet{zhang2020gradientclippingacceleratestraining}, generalized-smooth condition \ref{eq: gs} has been observed to hold in deep neural networks.
To further demonstrate the effectiveness of IAN-SGD algorithm, we conduct experiments for training deep neural networks. Specially, we train ResNet18 and ResNet50 \citep{resnet} on CIFAR10 Dataset \citep{CIFAR10} from scratch. We resize images as $32\times 32$ and normalize images with standard derivation equals to $0.5$ on each dimension.
At the beginning of each algorithm, we fix the random seed and initialize model parameters using the Kaiming initialization \citep{kaiminginitialization}. We compare our algorithm with several baseline methods, including SGD \citep{SGD}, Adam \citep{kingma2014adam}, Adagrad \citep{Adagrad}, NSGD, NSGD with momentum \citep{NSGDm} and clipped SGD \citep{zhang2020gradientclippingacceleratestraining}. 

For SGD, Adam and Adagrad, we utilize PyTorch built-in optimizer \citep{pytorchbuiltin} to implement training pipelines. We implement training pipelines for NSGD, NSGD with momentum, clipped SGD and IAN-SGD. The normalization constant is computed through all model parameters at each iteration. The detailed algorithm settings are as follows. For batch size, all algorithms use $B=128$, and $B'=32$ for IAN-SGD. For stochastic momentum moving average parameter, we set it as $0.9$, $0.99$ for Adam, and 0.25 for normalized SGD with momentum. For clipping threshold used in clipped SGD and IAN-SGD, we set them as 2 and $\delta = 1e^{-1}$. The normalization power used for IAN-SGD is $\beta = \frac{2}{3}$. We fine-tuned learning rate for all algorithms, i.e., $\gamma = 1e^{-3}$ for SGD, Adam and Adagrad, $\gamma = 1e^{-1}$ for NSGD and NSGD with momentum, $\gamma = 2e^{-1}$ for clipped SGD and IAN-SGD. We trained ResNet18, ResNet50 on CIFAR10 dataset for $60,80$ epochs and plot the training loss in Figure \ref{AddDNN1}. Figure \ref{AddDNN1} (left) shows the training loss of ResNet18, Figure\ref{AddDNN1} (right) shows the training loss of ResNet50.
As we can see from these figures, the (pink) loss curve optimized by IAN-SGD  indicates fast convergence rate comparable with several baselines, including SGD, NSGD NSGDm, clipped SGD, which demonstrate the effectiveness of IAN-SGD framework.

Based on our experiments on non-convex phase retrieval, distributionally robust optimization (DRO), deep neural networks, and an ablation study \ref{sec: effects of beta}, we found that IAN-SGD performs best when the adaptive normalization parameter is set to $\beta = \frac{2}{3}$. When $\beta$ is too small, i.e., $\beta\leq\frac{1}{2}$, IAN-SGD struggles to converge; when $\beta$ is close to 1, its performance resembles that of clipped SGD. To achieve fast convergence, the learning rate for IAN-SGD should lie between $1e^{-1}$ and $2e^{-1}$. Users should carefully fine-tune this parameter, as overly large learning rates may lead to divergence.

Regarding the numerical stabilization term $\delta$, our ablation study (see Appendix \ref{sec: effects of delta}) shows that IAN-SGD is empirically insensitive to its value; small value such as $1e^{-3}$ is sufficient to ensure convergence. Finally, to balance stable performance and fast convergence, ablation results (see Appendix \ref{sec: effects of Batch Size}) suggest that user should set batch size $B\geq 16$ and the independent sample batch size $|B'|$ should be at least one-quarter of batch size $|B|$.

\section{Conclusion}
In this work, we provide theoretical insights on how normalization interplays with function geometry, and their overall effects on convergence. We then propose independent normalized stochastic gradient descent for stochastic setting, achieving same sample complexity under relaxed assumptions. Our results extend the existing boundary of first-order nonconvex optimization and may inspire new
developments in this direction. In the future, it is interesting to explore if the popular acceleration method such as stochastic momentum and variance reduction can be combined with independent sampling and normalization to improve the sample complexity.

\section{Acknowledgements}
We are grateful to the anonymous reviewers and Dr. Ziyi Chen for insightful comments that led to several improvements in the presentation and technical details.
This research was supported in part by the Air Force Research Laboratory Information Directorate (AFRL/RI), through AFRL/RI Information Institute®, contract number FA8750-20-3-1003. The work of Yufeng Yang and Yi Zhou was supported by the National Science Foundation under grants DMS-2134223, ECCS-2237830. The work of Shaofeng Zou was supported in part by National Science Foundation under Grants CCF-2438429 and ECCS-2438392(CAREER).
The work of Yifan Sun was supported in part by the ONR grant W911NF-22-1-0292.


\bibliography{main}

\begin{thebibliography}{56}
\providecommand{\natexlab}[1]{#1}
\providecommand{\url}[1]{\texttt{#1}}
\expandafter\ifx\csname urlstyle\endcsname\relax
  \providecommand{\doi}[1]{doi: #1}\else
  \providecommand{\doi}{doi: \begingroup \urlstyle{rm}\Url}\fi

\bibitem[Arjevani et~al.(2023)Arjevani, Carmon, Duchi, Foster, Srebro, and Woodworth]{lowerbound}
Yossi Arjevani, Yair Carmon, John~C Duchi, Dylan~J Foster, Nathan Srebro, and Blake Woodworth.
\newblock Lower bounds for non-convex stochastic optimization.
\newblock \emph{Mathematical Programming}, 199\penalty0 (1):\penalty0 165--214, 2023.

\bibitem[Arshi(2017)]{life_expectancy_data}
Kumar~Ajay Arshi.
\newblock Life expectancy data.
\newblock Kaggle, 2017.
\newblock URL \url{https://www.kaggle.com/datasets/kumarajarshi/life-expectancy-who?resource=download}.

\bibitem[Chayti \& Jaggi(2024)Chayti and Jaggi]{metagensmooth}
El~Mahdi Chayti and Martin Jaggi.
\newblock A new first-order meta-learning algorithm with convergence guarantees.
\newblock \emph{arXiv preprint arXiv:2409.03682}, 2024.

\bibitem[Chen et~al.(2023)Chen, Zhou, Liang, and Lu]{pmlr-v202-chen23ar}
Ziyi Chen, Yi~Zhou, Yingbin Liang, and Zhaosong Lu.
\newblock Generalized-smooth nonconvex optimization is as efficient as smooth nonconvex optimization.
\newblock In \emph{Proceedings of the 40th International Conference on Machine Learning}, 2023.

\bibitem[Crawshaw et~al.(2022)Crawshaw, Liu, Orabona, Zhang, and Zhuang]{mingruisignsgdgs}
Michael Crawshaw, Mingrui Liu, Francesco Orabona, Wei Zhang, and Zhenxun Zhuang.
\newblock Robustness to unbounded smoothness of generalized signsgd.
\newblock \emph{Advances in neural information processing systems}, 35:\penalty0 9955--9968, 2022.

\bibitem[Cutkosky \& Mehta(2020)Cutkosky and Mehta]{NSGDm}
Ashok Cutkosky and Harsh Mehta.
\newblock Momentum improves normalized {SGD}.
\newblock In \emph{Proceedings of the 37th International Conference on Machine Learning}, volume 119 of \emph{Proceedings of Machine Learning Research}, pp.\  2260--2268. PMLR, 13--18 Jul 2020.

\bibitem[Drenth(2007)]{PhaseRetrieval1}
Jan Drenth.
\newblock \emph{Principles of protein X-ray crystallography}.
\newblock Springer Science \& Business Media, 2007.

\bibitem[Duchi et~al.(2011{\natexlab{a}})Duchi, Hazan, and Singer]{Adagrad}
John Duchi, Elad Hazan, and Yoram Singer.
\newblock Adaptive subgradient methods for online learning and stochastic optimization.
\newblock \emph{Journal of Machine Learning Research}, 12\penalty0 (61):\penalty0 2121--2159, 2011{\natexlab{a}}.

\bibitem[Duchi et~al.(2011{\natexlab{b}})Duchi, Hazan, and Singer]{duchiAdagrad}
John Duchi, Elad Hazan, and Yoram Singer.
\newblock Adaptive subgradient methods for online learning and stochastic optimization.
\newblock \emph{Journal of machine learning research}, 12\penalty0 (7), 2011{\natexlab{b}}.

\bibitem[Fang et~al.(2018)Fang, Li, Lin, and Zhang]{fang2018spidernearoptimalnonconvexoptimization}
Cong Fang, Chris~Junchi Li, Zhouchen Lin, and Tong Zhang.
\newblock Spider: Near-optimal non-convex optimization via stochastic path integrated differential estimator.
\newblock In \emph{Advances in neural information processing systems}, 2018.

\bibitem[Faw et~al.(2023)Faw, Rout, Caramanis, and Shakkottai]{faw2023relax}
Matthew Faw, Litu Rout, Constantine Caramanis, and Sanjay Shakkottai.
\newblock Beyond uniform smoothness: A stopped analysis of adaptive sgd.
\newblock In \emph{The Thirty Sixth Annual Conference on Learning Theory}, 2023.

\bibitem[Ghadimi \& Lan(2013)Ghadimi and Lan]{ghadimi2013stochastic}
Saeed Ghadimi and Guanghui Lan.
\newblock Stochastic first-and zeroth-order methods for nonconvex stochastic programming.
\newblock \emph{SIAM journal on optimization}, 23\penalty0 (4):\penalty0 2341--2368, 2013.

\bibitem[Gong et~al.(2024{\natexlab{a}})Gong, Hao, and Liu]{accbilevel}
Xiaochuan Gong, Jie Hao, and Mingrui Liu.
\newblock An accelerated algorithm for stochastic bilevel optimization under unbounded smoothness.
\newblock In \emph{Advances in Neural Information Processing Systems 37}, 2024{\natexlab{a}}.

\bibitem[Gong et~al.(2024{\natexlab{b}})Gong, Hao, and Liu]{bilevelgensmooth1}
Xiaochuan Gong, Jie Hao, and Mingrui Liu.
\newblock A nearly optimal single loop algorithm for stochastic bilevel optimization under unbounded smoothness.
\newblock In \emph{Proceedings of the 41st International Conference on Machine Learning}, 2024{\natexlab{b}}.

\bibitem[Gorbunov et~al.(2024)Gorbunov, Tupitsa, Choudhury, Aliev, Richtárik, Horváth, and Takáč]{petergensmooth}
Eduard Gorbunov, Nazarii Tupitsa, Sayantan Choudhury, Alen Aliev, Peter Richtárik, Samuel Horváth, and Martin Takáč.
\newblock Methods for convex $(l_0,l_1)$-smooth optimization: Clipping, acceleration, and adaptivity.
\newblock \emph{arXiv preprint arXiv:2409.14989}, 2024.

\bibitem[Hao et~al.(2024)Hao, Gong, and Liu]{bilevelgensmooth2}
Jie Hao, Xiaochuan Gong, and Mingrui Liu.
\newblock Bilevel optimization under unbounded smoothness: A new algorithm and convergence analysis.
\newblock In \emph{12th International Conference on Learning Representations}, 2024.

\bibitem[He et~al.(2015)He, Zhang, Ren, and Sun]{kaiminginitialization}
Kaiming He, Xiangyu Zhang, Shaoqing Ren, and Jian Sun.
\newblock Delving deep into rectifiers: Surpassing human-level performance on imagenet classification.
\newblock In \emph{Proceedings of the IEEE international conference on computer vision}, pp.\  1026--1034, 2015.

\bibitem[He et~al.(2016)He, Zhang, Ren, and Sun]{resnet}
Kaiming He, Xiangyu Zhang, Shaoqing Ren, and Jian Sun.
\newblock Deep residual learning for image recognition.
\newblock In \emph{Proceedings of the IEEE conference on computer vision and pattern recognition}, pp.\  770--778, 2016.

\bibitem[Hinton et~al.(2012)Hinton, Srivastava, and Swersky]{rmsprop}
Geoffrey Hinton, Nitish Srivastava, and Kevin Swersky.
\newblock Neural networks for machine learning lecture 6a overview of mini-batch gradient descent.
\newblock \emph{Cited on}, 14\penalty0 (8):\penalty0 2, 2012.

\bibitem[Hong \& Lin(2024)Hong and Lin]{hongAdagrad}
Yusu Hong and Junhong Lin.
\newblock Revisiting convergence of adagrad with relaxed assumptions.
\newblock In \emph{The 40th Conference on Uncertainty in Artificial Intelligence}, 2024.

\bibitem[H\"{u}bler et~al.(2024)H\"{u}bler, Yang, Li, and He]{tuningensmooth}
Florian H\"{u}bler, Junchi Yang, Xiang Li, and Niao He.
\newblock Parameter-agnostic optimization under relaxed smoothness.
\newblock In \emph{Proceedings of The 27th International Conference on Artificial Intelligence and Statistics}, 2024.

\bibitem[Jiang et~al.(2024)Jiang, Yang, Yang, and Zhang]{signSGDacc}
Wei Jiang, Sifan Yang, Wenhao Yang, and Lijun Zhang.
\newblock Efficient sign-based optimization: Accelerating convergence via variance reduction.
\newblock \emph{arXiv preprint arXiv:2406.00489}, 2024.

\bibitem[Jin et~al.(2021)Jin, Zhang, Wang, and Wang]{jin2021nonconvex}
Jikai Jin, Bohang Zhang, Haiyang Wang, and Liwei Wang.
\newblock Non-convex distributionally robust optimization: Non-asymptotic analysis.
\newblock In \emph{Advances in Neural Information Processing Systems}, 2021.

\bibitem[Kingma(2014)]{kingma2014adam}
Diederik~P Kingma.
\newblock Adam: A method for stochastic optimization.
\newblock \emph{arXiv preprint arXiv:1412.6980}, 2014.

\bibitem[Koloskova et~al.(2023)Koloskova, Hendrikx, and Stich]{usitchclip}
Anastasia Koloskova, Hadrien Hendrikx, and Sebastian~U Stich.
\newblock Revisiting gradient clipping: Stochastic bias and tight convergence guarantees.
\newblock In \emph{International Conference on Machine Learning}, pp.\  17343--17363. PMLR, 2023.

\bibitem[Krizhevsky(2009)]{CIFAR10}
Alex Krizhevsky.
\newblock Learning multiple layers of features from tiny images, 2009.

\bibitem[Levy et~al.(2020)Levy, Carmon, Duchi, and Sidford]{levy2020large}
Daniel Levy, Yair Carmon, John~C Duchi, and Aaron Sidford.
\newblock Large-scale methods for distributionally robust optimization.
\newblock In \emph{Advances in Neural Information Processing Systems}, 2020.

\bibitem[Li \& Pong(2018)Li and Pong]{KLexponent}
Guoyin Li and Ting~Kei Pong.
\newblock Calculus of the exponent of kurdyka--{\l}ojasiewicz inequality and its applications to linear convergence of first-order methods.
\newblock \emph{Foundations of computational mathematics}, 18\penalty0 (5):\penalty0 1199--1232, 2018.

\bibitem[Li et~al.(2023)Li, Rakhlin, and Jadbabaie]{adamconvg}
Haochuan Li, Alexander Rakhlin, and Ali Jadbabaie.
\newblock Convergence of adam under relaxed assumptions.
\newblock In \emph{Advances in Neural Information Processing Systems}, 2023.

\bibitem[Li et~al.(2024)Li, Qian, Tian, Rakhlin, and Jadbabaie]{li2023convexnonconvexoptimizationgeneralized}
Haochuan Li, Jian Qian, Yi~Tian, Alexander Rakhlin, and Ali Jadbabaie.
\newblock Convex and non-convex optimization under generalized smoothness.
\newblock In \emph{Advances in Neural Information Processing Systems}, 2024.

\bibitem[Liu et~al.(2022{\natexlab{a}})Liu, Zhu, and Belkin]{genPL1}
Chaoyue Liu, Libin Zhu, and Mikhail Belkin.
\newblock Loss landscapes and optimization in over-parameterized non-linear systems and neural networks.
\newblock \emph{Applied and Computational Harmonic Analysis}, 59:\penalty0 85--116, 2022{\natexlab{a}}.

\bibitem[Liu et~al.(2022{\natexlab{b}})Liu, Zhuang, Lei, and Liao]{mingruiclip}
Mingrui Liu, Zhenxun Zhuang, Yunwen Lei, and Chunyang Liao.
\newblock A communication-efficient distributed gradient clipping algorithm for training deep neural networks.
\newblock In \emph{Advances in Neural Information Processing Systems}, 2022{\natexlab{b}}.

\bibitem[Liu et~al.(2023)Liu, Yuan, Fu, Jiang, Hayashi, and Neubig]{liu2021pretrain}
Pengfei Liu, Xiang Yuan, Jinlan Fu, Zhengbao Jiang, Hiroaki Hayashi, and Graham Neubig.
\newblock Pre-train, prompt, and predict: A systematic survey of prompting methods in natural language processing.
\newblock \emph{ACM Computing Surveys (CSUR)}, 55\penalty0 (9):\penalty0 1--35, 2023.

\bibitem[Miao et~al.(1999)Miao, Charalambous, Kirz, and Sayre]{PhaseRetrieval2}
Jianwei Miao, Pambos Charalambous, Janos Kirz, and David Sayre.
\newblock Extending the methodology of x-ray crystallography to allow imaging of micrometre-sized non-crystalline specimens.
\newblock \emph{Nature}, 400\penalty0 (6742):\penalty0 342--344, 1999.

\bibitem[Mishkin et~al.(2024)Mishkin, Khaled, Wang, Defazio, and Gower]{directionalsmoothness}
Aaron Mishkin, Ahmed Khaled, Yuanhao Wang, Aaron Defazio, and Robert~M. Gower.
\newblock Directional smoothness and gradient methods: Convergence and adaptivity.
\newblock \emph{arXiv preprint arXiv:2403.04081}, 2024.

\bibitem[Nichol et~al.(2018)Nichol, Achiam, and Schulman]{firstordermeta}
Alex Nichol, Joshua Achiam, and John Schulman.
\newblock On first-order meta-learning algorithms.
\newblock \emph{arXiv preprint arXiv:1803.02999}, 2018.

\bibitem[Paszke et~al.(2019)Paszke, Gross, Massa, Lerer, Bradbury, Chanan, Killeen, Lin, Gimelshein, Antiga, et~al.]{pytorchbuiltin}
Adam Paszke, Sam Gross, Francisco Massa, Adam Lerer, James Bradbury, Gregory Chanan, Trevor Killeen, Zeming Lin, Natalia Gimelshein, Luca Antiga, et~al.
\newblock Pytorch: An imperative style, high-performance deep learning library.
\newblock \emph{Advances in Neural Information Processing Systems}, 32:\penalty0 8026--8037, 2019.
\newblock URL \url{https://pytorch.org/docs/stable/optim.html}.

\bibitem[Reisizadeh et~al.(2023)Reisizadeh, Li, Das, and Jadbabaie]{reisizadeh2023variancereducedclippingnonconvexoptimization}
Amirhossein Reisizadeh, Haochuan Li, Subhro Das, and Ali Jadbabaie.
\newblock Variance-reduced clipping for non-convex optimization.
\newblock \emph{arXiv preprint arXiv:2303.00883}, 2023.

\bibitem[Robbins \& Monro(1951)Robbins and Monro]{SGD}
Herbert Robbins and Sutton Monro.
\newblock A stochastic approximation method.
\newblock \emph{The annals of mathematical statistics}, pp.\  400--407, 1951.

\bibitem[Scaman et~al.(2022)Scaman, Malherbe, and Santos]{genPL2}
Kevin Scaman, Cedric Malherbe, and Ludovic~Dos Santos.
\newblock Convergence rates of non-convex stochastic gradient descent under a generic lojasiewicz condition and local smoothness.
\newblock In \emph{Proceedings of the 39th International Conference on Machine Learning}, Proceedings of Machine Learning Research. PMLR, 17--23 Jul 2022.

\bibitem[Sun et~al.(2023{\natexlab{a}})Sun, Chen, Qiao, Shen, Liu, and Li]{generalsmoothsignSGD}
Tao Sun, Congliang Chen, Peng Qiao, Li~Shen, Xinwang Liu, and Dongsheng Li.
\newblock Rethinking sign training: Provable nonconvex acceleration without first- and second-order gradient lipschitz, 2023{\natexlab{a}}.

\bibitem[Sun et~al.(2023{\natexlab{b}})Sun, Wang, Li, and Wang]{signSGDimprov}
Tao Sun, Qingsong Wang, Dongsheng Li, and Bao Wang.
\newblock Momentum ensures convergence of {SIGNSGD} under weaker assumptions.
\newblock In Andreas Krause, Emma Brunskill, Kyunghyun Cho, Barbara Engelhardt, Sivan Sabato, and Jonathan Scarlett (eds.), \emph{Proceedings of the 40th International Conference on Machine Learning}, volume 202 of \emph{Proceedings of Machine Learning Research}, pp.\  33077--33099. PMLR, 23--29 Jul 2023{\natexlab{b}}.

\bibitem[Vankov et~al.(2024{\natexlab{a}})Vankov, Nedich, and Sankar]{INSGDcompeteversion}
Daniil Vankov, Angelia Nedich, and Lalitha Sankar.
\newblock Generalized smooth stochastic variational inequalities: Almost sure convergence and convergence rates, 2024{\natexlab{a}}.

\bibitem[Vankov et~al.(2024{\natexlab{b}})Vankov, Rodomanov, Nedich, Sankar, and Stich]{L0L1newstudy}
Daniil Vankov, Anton Rodomanov, Angelia Nedich, Lalitha Sankar, and Sebastian~U. Stich.
\newblock Optimizing $(l_0, l_1)$-smooth functions by gradient methods, 2024{\natexlab{b}}.

\bibitem[Wang et~al.(2023)Wang, Zhang, Ma, and Chen]{adagradnorm}
Bohan Wang, Huishuai Zhang, Zhiming Ma, and Wei Chen.
\newblock Convergence of adagrad for non-convex objectives: Simple proofs and relaxed assumptions.
\newblock In \emph{The Thirty Sixth Annual Conference on Learning Theory}, 2023.

\bibitem[Wang et~al.(2024{\natexlab{a}})Wang, Zhang, Meng, Sun, Ma, and Chen]{huishuaigensmoothadam}
Bohan Wang, Huishuai Zhang, Qi~Meng, Ruoyu Sun, Zhi-Ming Ma, and Wei Chen.
\newblock On the convergence of adam under non-uniform smoothness: Separability from sgdm and beyond, 2024{\natexlab{a}}.

\bibitem[Wang et~al.(2024{\natexlab{b}})Wang, Zhang, Zhang, Meng, Sun, Ma, Liu, Luo, and Chen]{huishuaiadam2}
Bohan Wang, Yushun Zhang, Huishuai Zhang, Qi~Meng, Ruoyu Sun, Zhi-Ming Ma, Tie-Yan Liu, Zhi-Quan Luo, and Wei Chen.
\newblock Provable adaptivity of adam under non-uniform smoothness.
\newblock In \emph{Proceedings of the 30th ACM SIGKDD Conference on Knowledge Discovery and Data Mining}, 2024{\natexlab{b}}.

\bibitem[Xian et~al.(2024)Xian, Chen, and Huang]{ziyiminmax}
Wenhan Xian, Ziyi Chen, and Heng Huang.
\newblock Delving into the convergence of generalized smooth minimization optimization.
\newblock In \emph{12th International Conference on Learning Representations}, 2024.

\bibitem[Xie et~al.(2024{\natexlab{a}})Xie, Li, Zhang, Deng, Ge, and Ye]{trustgensmooth}
Chenghan Xie, Chenxi Li, Chuwen Zhang, Qi~Deng, Dongdong Ge, and Yinyu Ye.
\newblock Trust region methods for nonconvex stochastic optimization beyond lipschitz smoothness.
\newblock In \emph{Proceedings of the AAAI Conference on Artificial Intelligence}, 2024{\natexlab{a}}.

\bibitem[Xie et~al.(2024{\natexlab{b}})Xie, Zhao, and Zhou]{onlinegensmooth}
Yan-Feng Xie, Peng Zhao, and Zhi-Hua Zhou.
\newblock Gradient-variation online learning under generalized smoothness.
\newblock \emph{arXiv preprint arXiv:2408.09074}, 2024{\natexlab{b}}.

\bibitem[Zhang et~al.(2020{\natexlab{a}})Zhang, Jin, Fang, and Wang]{genralclip}
Bohang Zhang, Jikai Jin, Cong Fang, and Liwei Wang.
\newblock Improved analysis of clipping algorithms for non-convex optimization.
\newblock In \emph{Advances in Neural Information Processing Systems}, 2020{\natexlab{a}}.

\bibitem[Zhang et~al.(2019)Zhang, He, Sra, and Jadbabaie]{zhang2020gradientclippingacceleratestraining}
Jingzhao Zhang, Tianxing He, Suvrit Sra, and Ali Jadbabaie.
\newblock Why gradient clipping accelerates training: A theoretical justification for adaptivity.
\newblock In \emph{International Conference on Learning Representations}, 2019.

\bibitem[Zhang et~al.(2020{\natexlab{b}})Zhang, Karimireddy, Veit, Kim, Reddi, Kumar, and Sra]{zhangLsmoothclip}
Jingzhao Zhang, Sai~Praneeth Karimireddy, Andreas Veit, Seungyeon Kim, Sashank Reddi, Sanjiv Kumar, and Suvrit Sra.
\newblock Why are adaptive methods good for attention models?
\newblock \emph{Advances in Neural Information Processing Systems}, 33:\penalty0 15383--15393, 2020{\natexlab{b}}.

\bibitem[Zhang et~al.(2024{\natexlab{a}})Zhang, Xiao, Ji, and Zou]{qimulti-obj}
Qi~Zhang, Peiyao Xiao, Kaiyi Ji, and Shaofeng Zou.
\newblock On the convergence of multi-objective optimization under generalized smoothness.
\newblock \emph{arXiv preprint arXiv:2405.19440}, 2024{\natexlab{a}}.

\bibitem[Zhang et~al.(2024{\natexlab{b}})Zhang, Zhou, and Zou]{qigensmoothadam}
Qi~Zhang, Yi~Zhou, and Shaofeng Zou.
\newblock Convergence guarantees for rmsprop and adam in generalized-smooth non-convex optimization with affine noise variance.
\newblock \emph{arXiv preprint arXiv:2404.01436}, 2024{\natexlab{b}}.

\bibitem[Zhou et~al.(2016)Zhou, Zhang, and Liang]{PhaseretrievalPL}
Yi~Zhou, Huishuai Zhang, and Yingbin Liang.
\newblock Geometrical properties and accelerated gradient solvers of non-convex phase retrieval.
\newblock In \emph{2016 54th Annual Allerton Conference on Communication, Control, and Computing (Allerton)}, pp.\  331--335, 2016.

\end{thebibliography}
\bibliographystyle{tmlr}

\newpage
\appendix
\setcounter{tocdepth}{1}
\renewcommand{\contentsname}{Appendix Table of Contents} 
\tableofcontents

\newpage
\section{Ablation Study}\label{sec: ablation study}
In order to have a comprehensive understanding of the performance of IAN-SGD in practical problems, we conduct ablation studies regarding on important components of IAN-SGD, including adaptive normalization $\beta$, batch size $|B'|$ of independent samples and numerical stabilizer $\delta$.

\subsection{\texorpdfstring{Effects of $\beta$}{Effects of beta}}\label{sec: effects of beta}
To justify the advantage of using adaptive normalization, we conduct the following two experiments. 

In the first experiment, we choose a unified normalization parameter $\beta = \frac{2}{3}$ for all the normalized methods, i.e., NSGD, NSGD with momentum, clipped SGD, SPIDER and IAN-SGD. To guarantee convergence, we fine-tuned the learning rate accordingly, i.e., $\gamma = 0.03$ for NSGD and NSGD with momentum, $\gamma = 0.05$ for SPIDER, $\gamma = 0.17$ for both clipped SGD and IAN-SGD. To make a fair comparison, we keep other parameters unchanged. Figure \ref{effects of beta} (left) shows the comparison of objective value versus sample complexity for the Phase Retrieval problem. It can be observed that, by adjusting $\beta = \frac{2}{3}$, the objective value optimized by all algorithms decreases much faster compared with Figure \ref{PhaseRetrieval+DRO}. This indicates that adaptive normalization can accelerate convergence. Moreover, compared with other normalization methods, although IAN-SGD requires additional sampling at each iteration, the training loss still decreases faster than NSGD SGD with momentum and SPIDER. 

Similarly, for DRO, we unify the normalization parameter of all the normalized methods, i.e., NSGD, NSGD with momentum, clipped SGD, SPIDER and IAN-SGD with $\beta = \frac{2}{3}$. To guarantee algorithm convergence, we fine-tuned learning rate accordingly. For NSGD, NSGDm and SPIDER, we keep learning rate unchanged; For clipped SGD, and IAN-SGD, we set $\gamma = 0.08$. To make a fair comparison, we keep other parameters unchanged. Figure \ref{effects of beta} (right) shows the comparison of objective value versus sample complexity. It can be observed that, by setting $\beta = \frac{2}{3}$, the objective value optimized by all normalization methods decreases faster than Figure \ref{PhaseRetrieval+DRO}. This verifies the effectiveness of adaptive normalization.
Although IAN-SGD requires additional sampling, it converges faster than other normalized methods. 

In summary, the results in Figure \ref{effects of beta} indicate that independent sampling and adaptive normalization doesn't increase the sample complexity to find a stationary point, which justifies IAN-SGD framework's effectiveness when dealing with nonconvex geometry characterized by generalized-smooth condition.
\begin{figure}[ht]
    \centering
    \begin{subfigure}[b]{0.45\textwidth}
        \centering
        \includegraphics[width=1.15\linewidth]{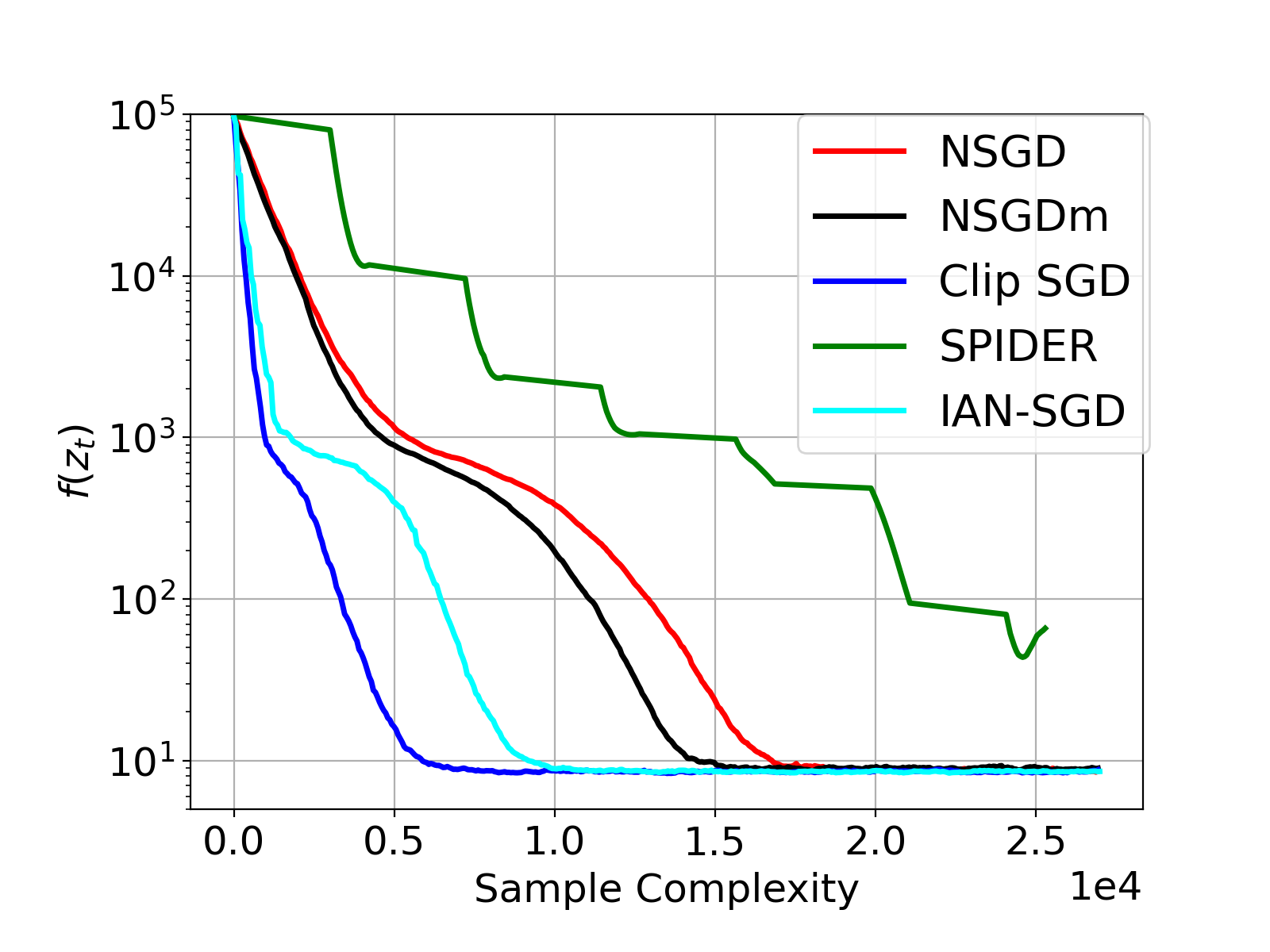}
    \end{subfigure}
    \hfill
    \begin{subfigure}[b]{0.45\textwidth}
        \centering
        \includegraphics[width=1.15\linewidth]{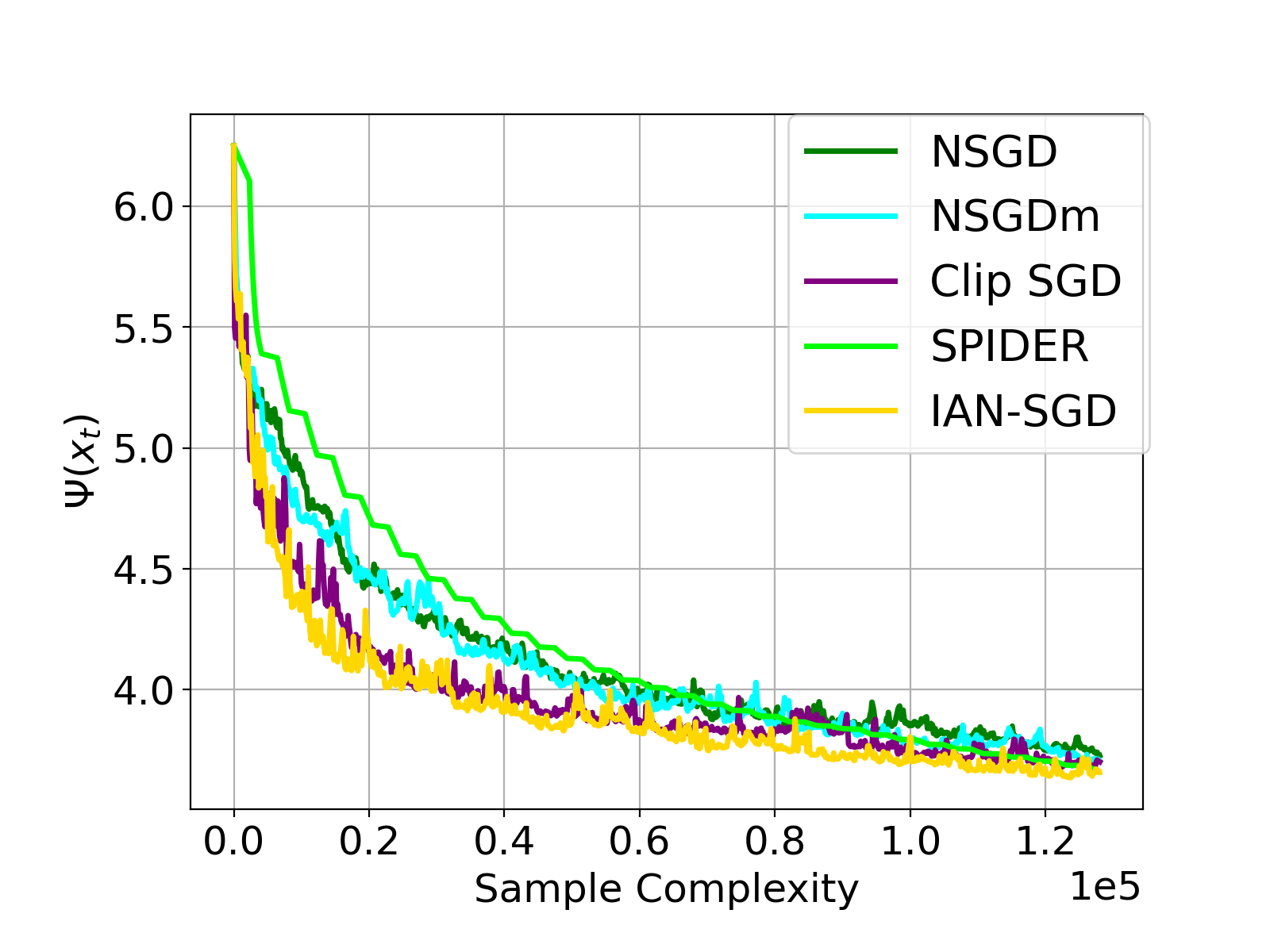}
    \end{subfigure}

    \caption{Advantage of using adaptive normalization on normalized first-order algorithms}
    \label{effects of beta}
\end{figure}

In the second experiment, we vary $\beta$ for IAN-SGD and keep other parameters unchanged. Figure \ref{effects of beta ctd} shows the convergence result with different $\beta$ for Phase Retrieval and DRO. It can be observed that decreasing $\beta$  accelerates convergence in general. However, small $\beta$ can make convergence unstable. In the DRO experiment shown in Figure \ref{effects of beta ctd} (right), we observed that when $\beta=\frac{3}{5}$, the objective value curve fluctuates a lot. If $\beta$ is smaller than $\frac{3}{5}$, the algorithm fails to converge. This phenomenon matches the implications of Theorem \ref{beta GD convergence}, where $\beta$ must satisfy $\beta\in [\alpha,1]$ to guarantee convergence.
\begin{figure}[ht]
    \centering
    \begin{subfigure}[b]{0.45\textwidth}
        \centering
        \includegraphics[width=1.15\linewidth]{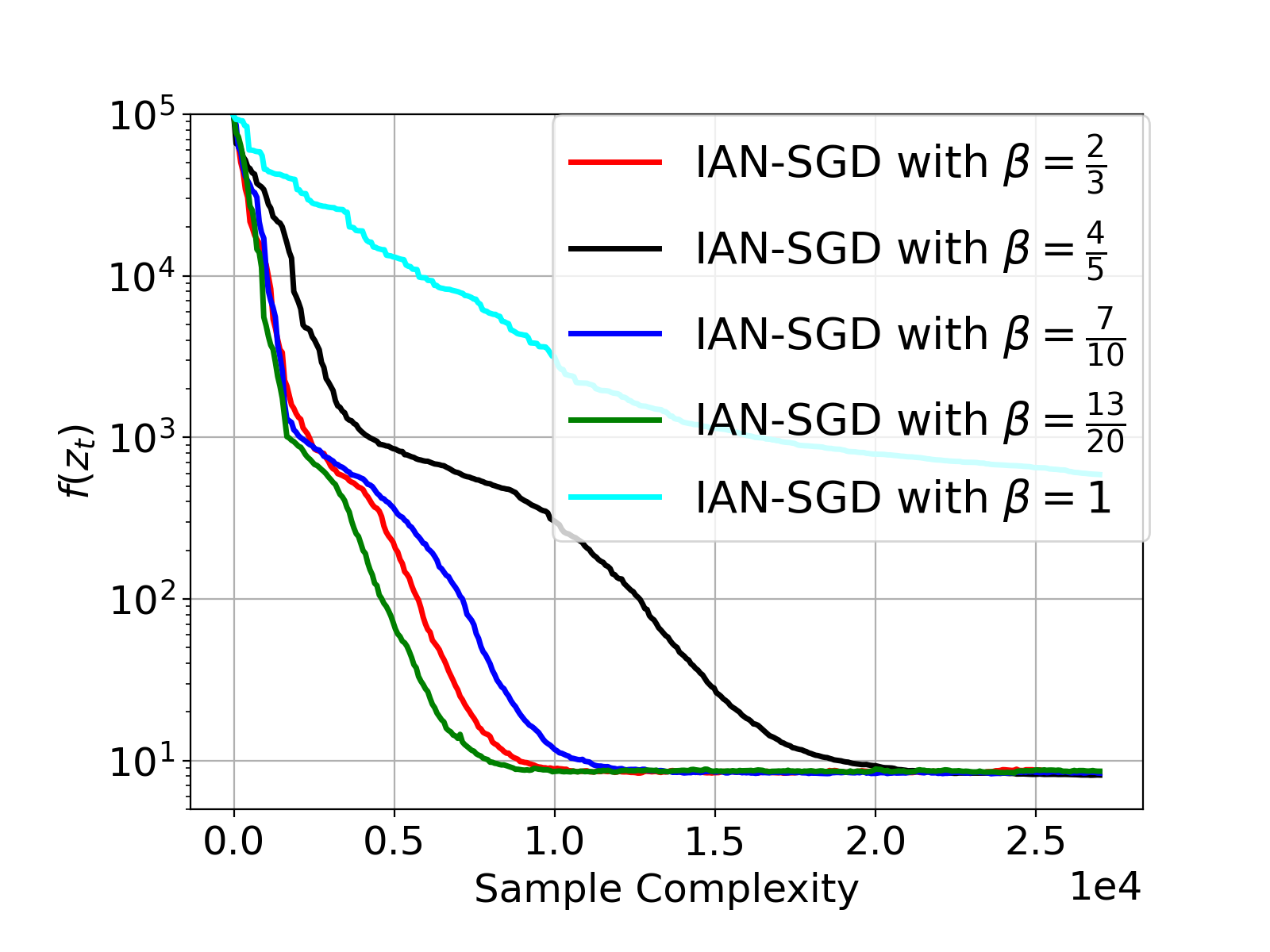}
    \end{subfigure}
    \hfill
    \begin{subfigure}[b]{0.45\textwidth}
        \centering
        \includegraphics[width=1.15\linewidth]{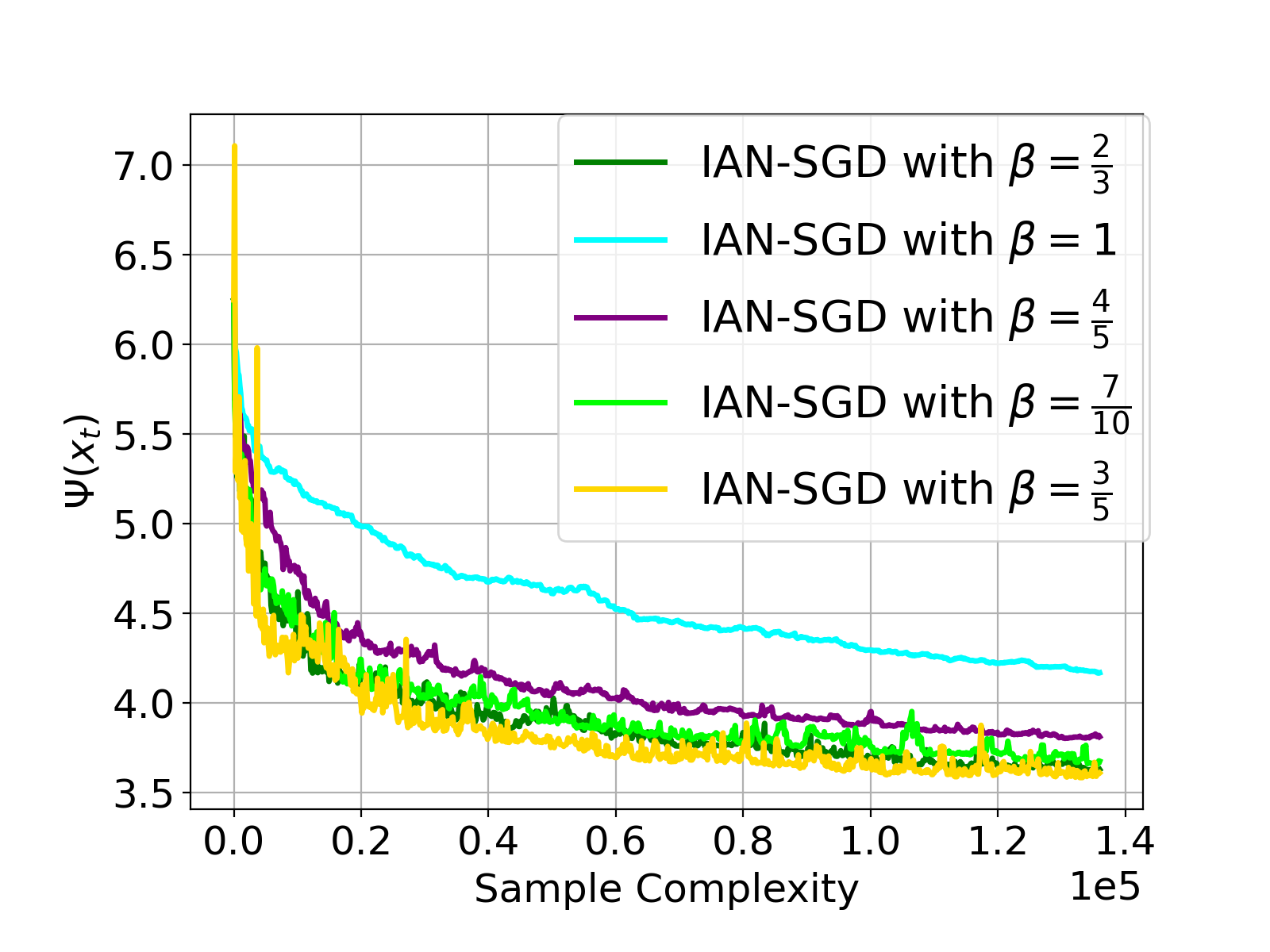}
    \end{subfigure}

    \caption{Effects of adaptive normalization on convergence of IAN-SGD}
    \label{effects of beta ctd}
\end{figure}

\subsection{Effects of Independent Samples' Batch Size}\label{sec: effects of Batch Size}

In this section, we study the effects of batch size for independent samples used in IAN-SGD. For Phase retrieval, we keep $|B|=64$ and other parameters same as section \ref{sec: effects of beta}, and we vary independent batch sizes to be $|B'| = \{ 4, 8, 16, 32, 64\} $. 
Similarly, for DRO, we keep $|B|=128$ and other parameters same as section \ref{sec: effects of beta}, and we vary batch size of independent samples $|B'| = \{ 16, 32, 64, 128\}$.

The following Figure \ref{effects of batch size} shows the convergence of IAN-SGD under different batch size choices for Phase Retrieval (left) and DRO (right).
\begin{figure}[ht]
    \centering
    \begin{subfigure}[b]{0.45\textwidth}
        \centering
        \includegraphics[width=1.15\linewidth]{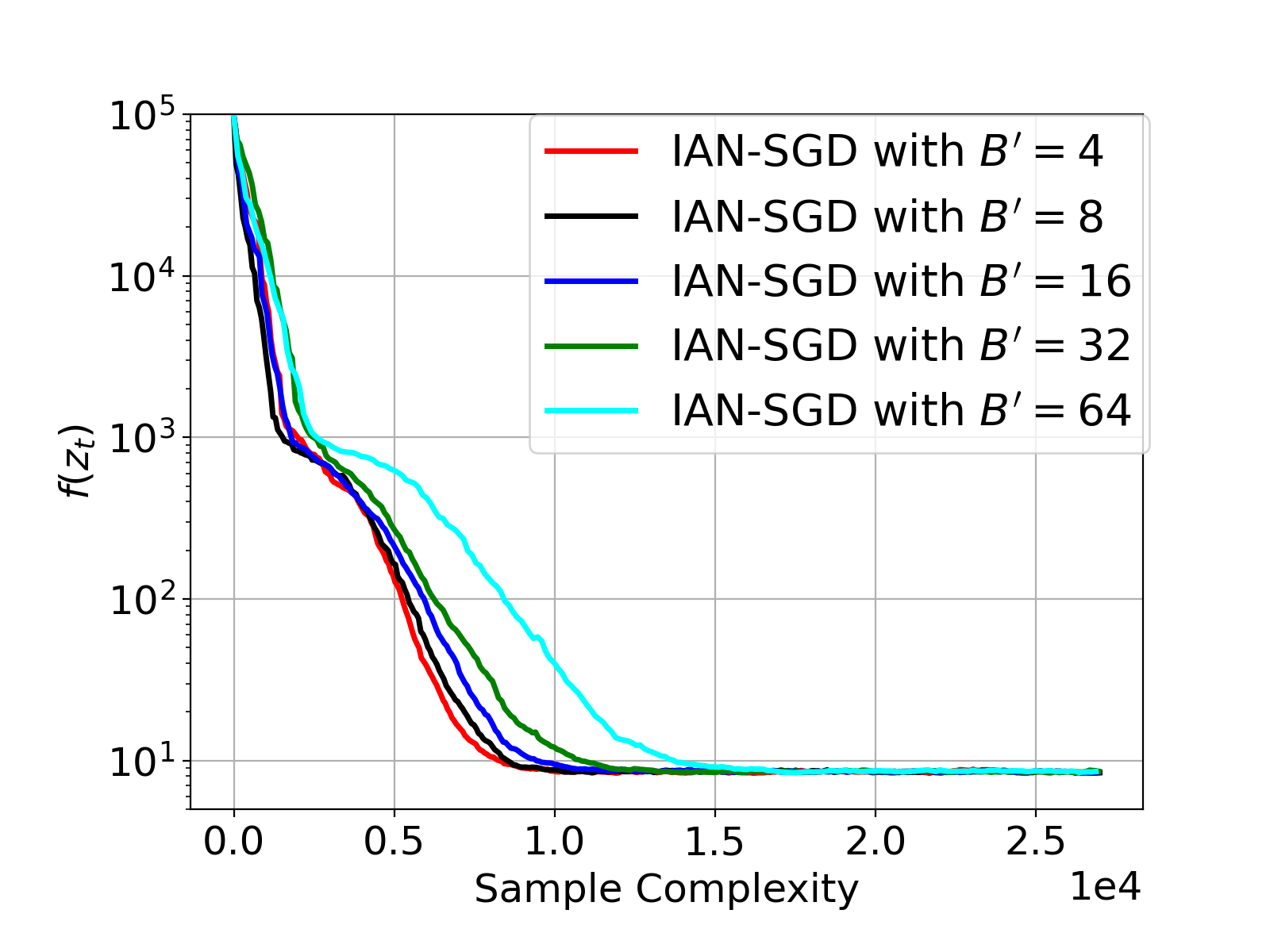}
    \end{subfigure}
    \hfill
    \begin{subfigure}[b]{0.45\textwidth}
        \centering
        \includegraphics[width=1.15\linewidth]{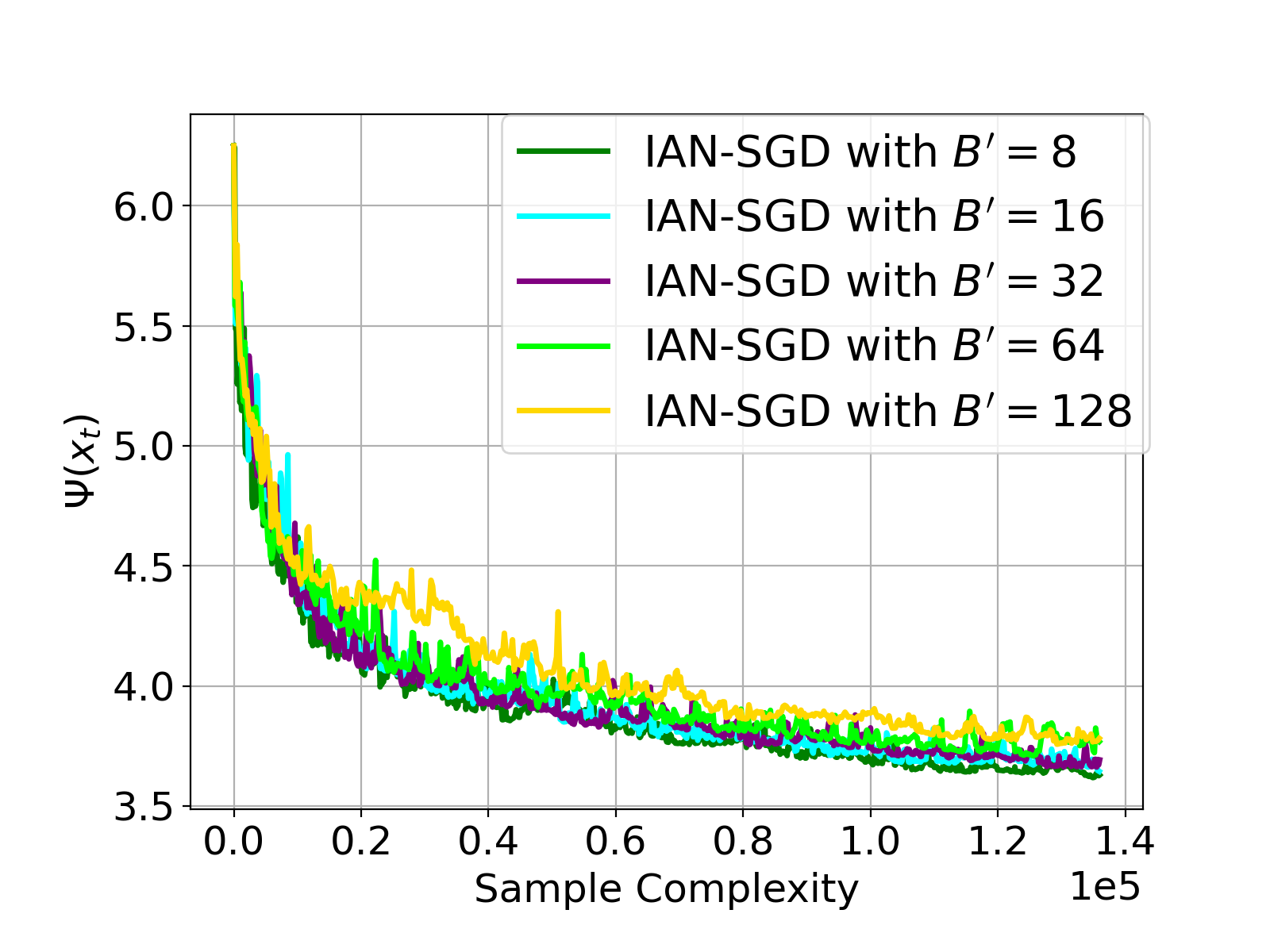}
    \end{subfigure}

    \caption{Effects of independent samples' batch size on convergence}
    \label{effects of batch size}
\end{figure}
It can be observed that for Phase retrieval and DRO problem, small batch sizes such as $|B'|=\{ 4, 8\} $ are sufficient to guarantee convergence, whereas large batch sizes of $|B'|$ do not significantly increase the sample complexity, which verifies the effectiveness of independent sampling.

\subsection{\texorpdfstring{Effects of $\delta$}{Effects of delta}}\label{sec: effects of delta}
In Theorem \ref{Clip SGD converge}, we set $\delta = \frac{\tau_2}{1-\tau_1}$ based on Assumption \ref{assum6}, which assumes an affine bound for approximation error $\|\nabla f_{\xi}(w)-\nabla F(w) \|$. This assumption relaxes the strong assumption used in \citet{zhang2020gradientclippingacceleratestraining,mingruiclip,genralclip}, where they assume that the approximation error $\|\nabla f_{\xi}(w)-\nabla F(w) \|$ is upper bounded by a constant. The major weakness is that if certain samples have outlier gradients, such constant upper bound can be very large. Thus, to verify the effectiveness of Assumption \ref{assum6}, we expect convergence of IAN-SGD under a wide range of $\delta$, especially for small $\delta$. 

To study the effect of $\delta$, we keep other parameters the same as mentioned in Section \ref{sec: effects of beta} and only vary $\delta$. For Phase Retrieval, we vary $\delta=\{1e^{-8},1e^{-3}, 1e^{-1}, 1,10 \}$ and Figure \ref{effects of delta} (left) shows the corresponding convergence result. For DRO, we vary $\delta = \{1e^{-8}, 1e^{-3}, 1e^{-1}, 1, 10 \}$ and Figure \ref{effects of delta} (right) shows the convergence result. We observe that IAN-SGD convergence is robust to the choice of $\delta$. But $\delta=1e^{-3}, 1e^{-1}$ demonstrate faster convergence than others for Phase retrieval and DRO experiments. This result indicates that a small $\delta$ is sufficient to guarantee convergence, which verifies the effectiveness of assumption \ref{assum6} and IAN-SGD when dealing with stochastic nonconvex generalized-smooth geometry.
\begin{figure}[ht]
    \centering
    \begin{subfigure}[b]{0.45\textwidth}
        \centering
        \includegraphics[width=1.15\linewidth]{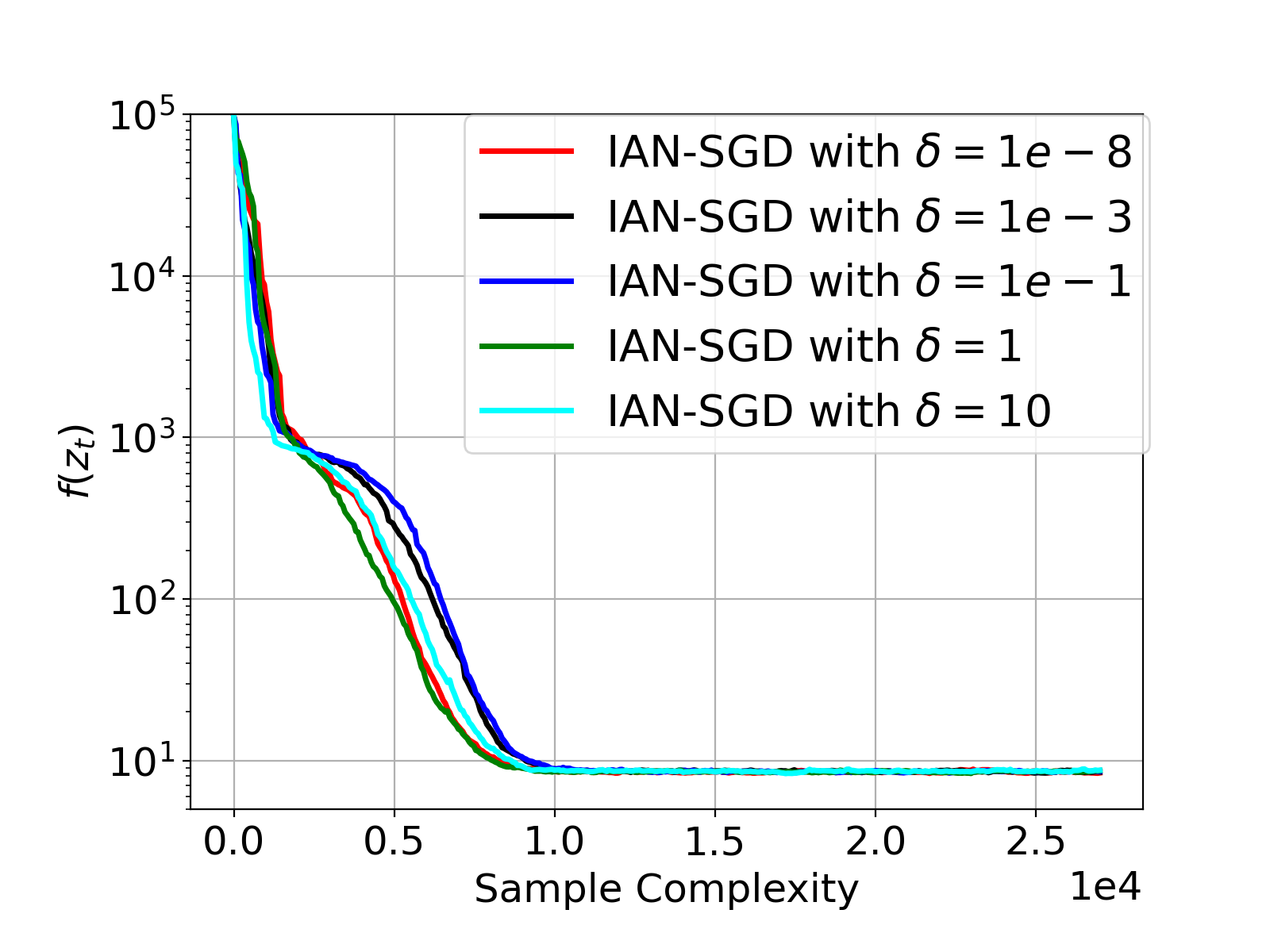}
    \end{subfigure}
    \hfill
    \begin{subfigure}[b]{0.45\textwidth}
        \centering
        \includegraphics[width=1.15\linewidth]{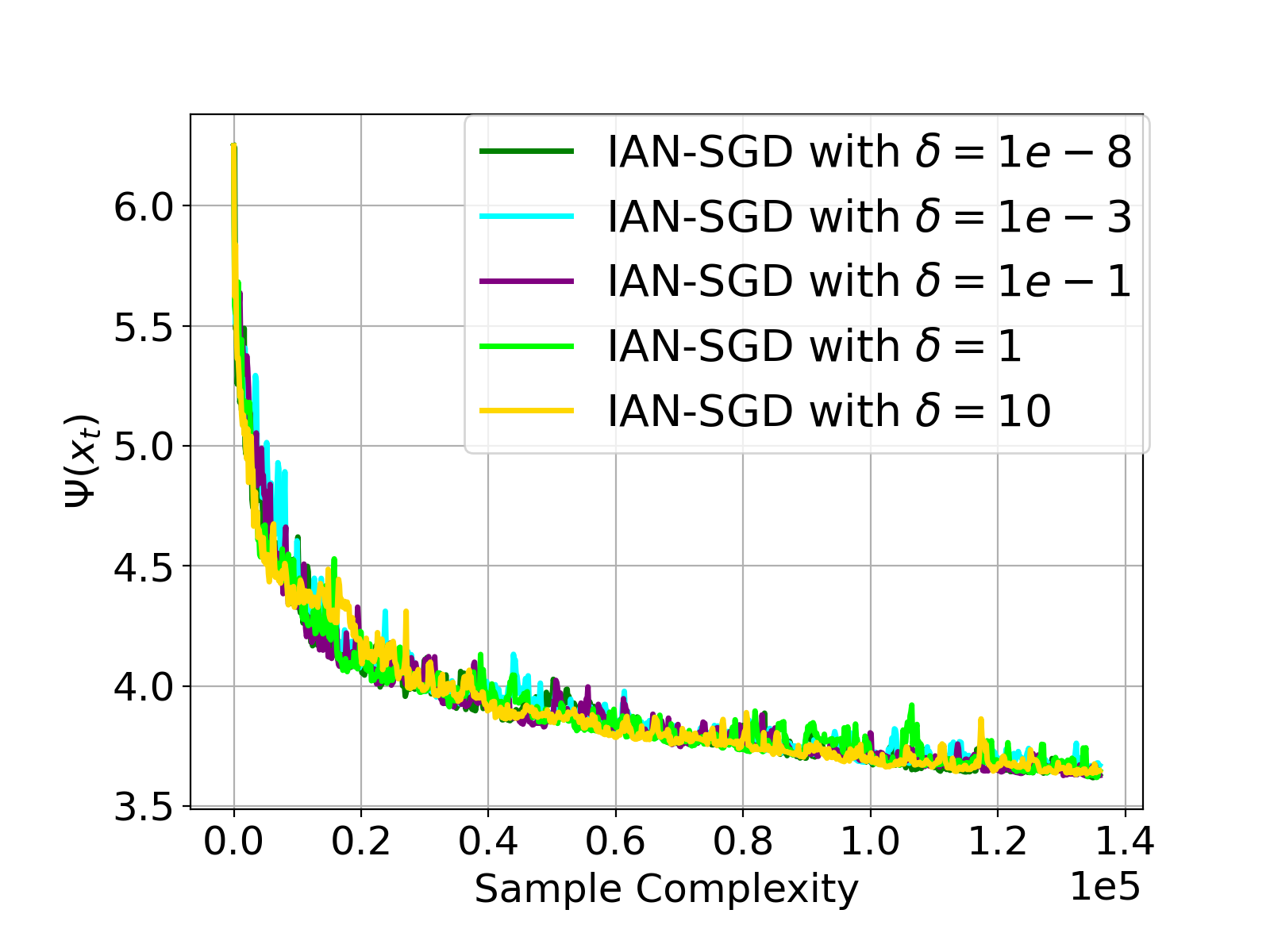}
    \end{subfigure}

    \caption{Effects of $\delta$ on convergence}
    \label{effects of delta}
\end{figure}
\subsubsection{Effects of  $\tau_1$, $\tau_2$ on convergence}
In Theorem~\ref{Clip SGD converge}, we set $ A = \frac{1}{1 - \tau_1}$ and $ \delta = \frac{\tau_2}{1 - \tau_1} $, making the sample complexity dependent on both $ \tau_1 $ and $ \tau_2 $. To further investigate the effect of the gradient noise assumption, we perform experiments that allow flexible choices of $\tau_1 $ and $ \tau_2 $. Specifically, we control the stochastic gradient noise by computing gradients over mini-batch samples with $ B = B' = 16 $ at each iteration. We then evaluate IAN-SGD under various noise settings with $ \tau_1 \in \{0, 0.5, 0.8\} $ and $ \tau_2 \in \{10^{-8}, 10^{-3}, 10^{-1}, 1, 10\} $. As a result, the scaling factor becomes $ A \in \{1, 2, 5\} $ and we define $ \delta = A \cdot \{10^{-8}, 10^{-3}, 10^{-1}, 1, 10\}$ accordingly. For phase retrieval, we use a unified learning rate $ \gamma = 1e^{-1} $, and for DRO, we set $ \gamma = 5e^{-3} $. Other parameters, such as the clipping threshold and $ \beta $, remain unchanged. Figure~\ref{PR DRO effects of tau} plots the results, where the first row shows IAN-SGD's convergence for phase retrieval, and the second row shows IAN-SGD's convergence for DRO.

\begin{figure}[ht]
    \centering
    \begin{subfigure}[b]{0.3\textwidth}
        \centering
        \includegraphics[width=1.0\linewidth]{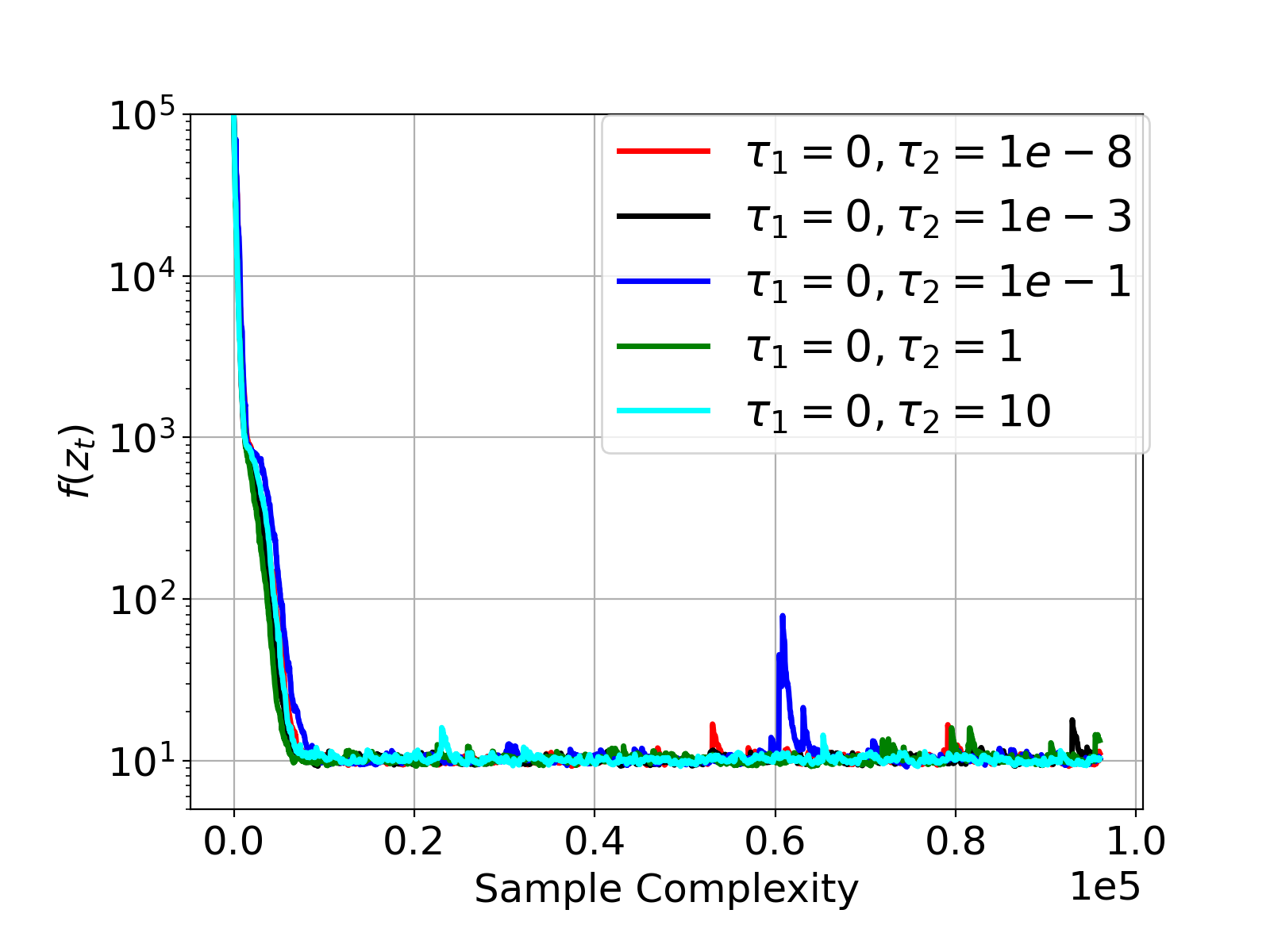}
    \end{subfigure}
    \begin{subfigure}[b]{0.3\textwidth}
        \centering
        \includegraphics[width=1.0\linewidth]{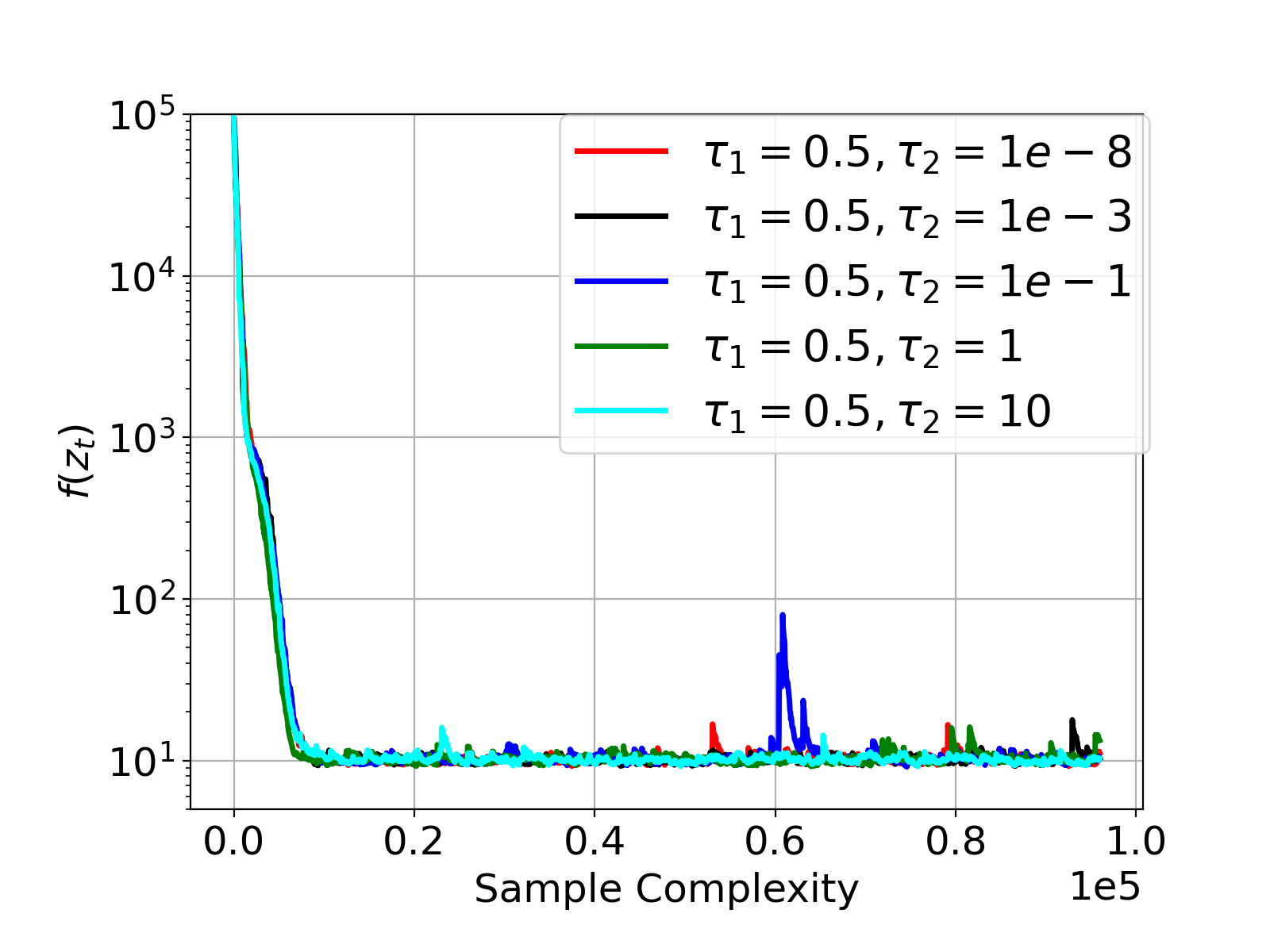}
    \end{subfigure}
    \begin{subfigure}[b]{0.3\textwidth}
        \centering
        \includegraphics[width=1.0\linewidth]{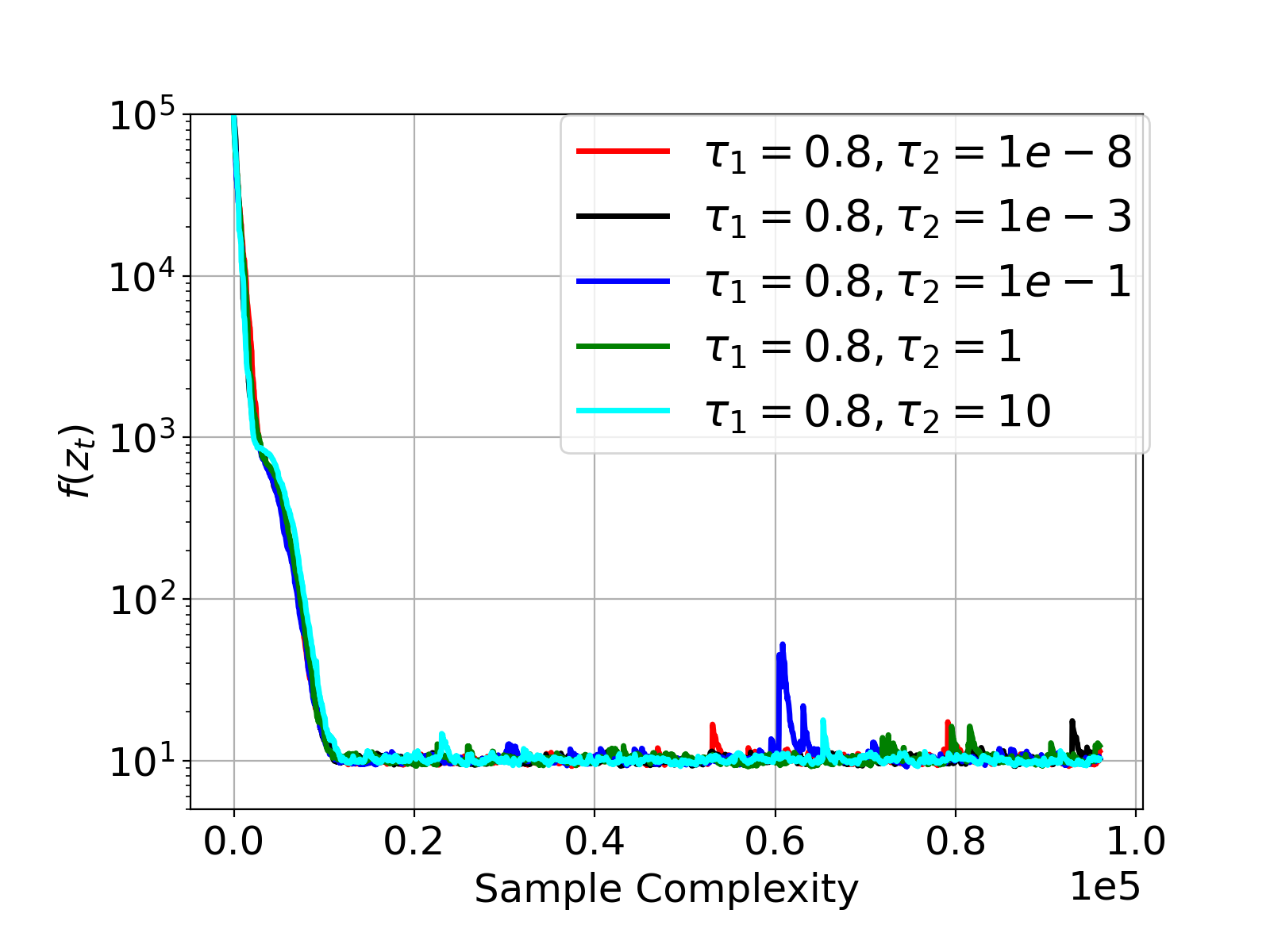}
    \end{subfigure}
    \hfill
    \begin{subfigure}[b]{0.3\textwidth}
        \centering
        \includegraphics[width=1.0\linewidth]{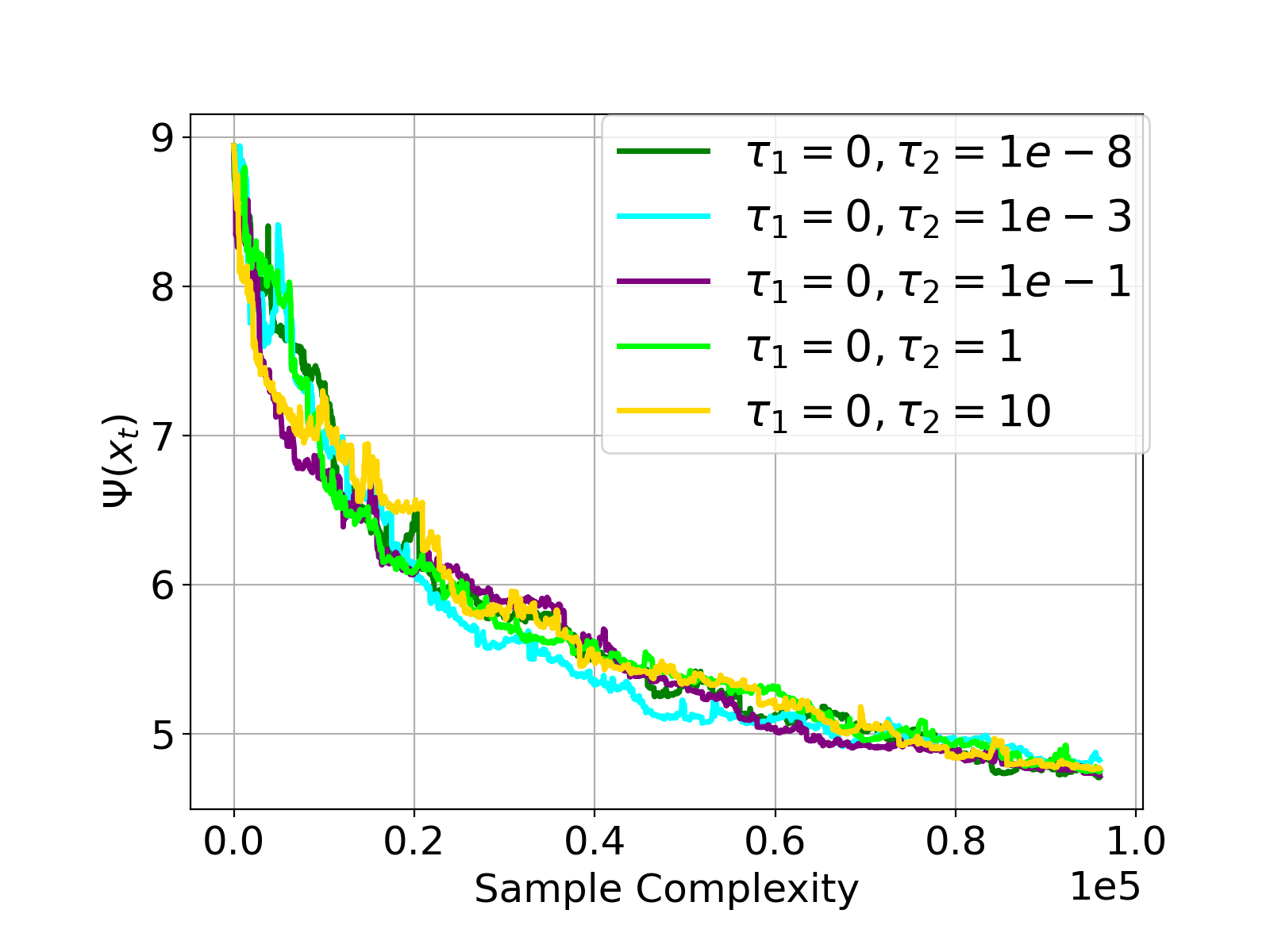}
    \end{subfigure}
    \begin{subfigure}[b]{0.3\textwidth}
        \centering
        \includegraphics[width=1.0\linewidth]{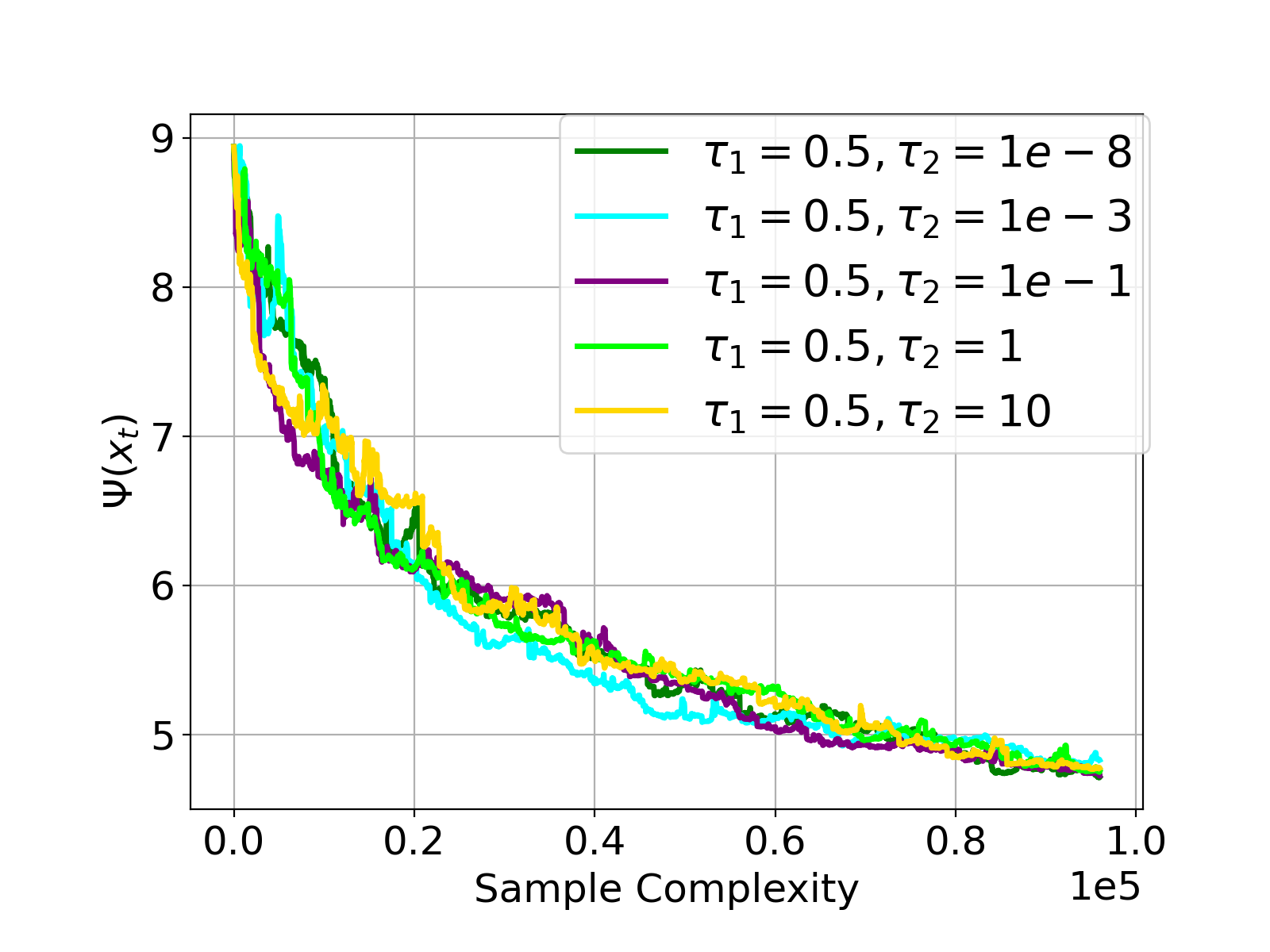}
    \end{subfigure}
    \begin{subfigure}[b]{0.3\textwidth}
        \centering
        \includegraphics[width=1.0\linewidth]{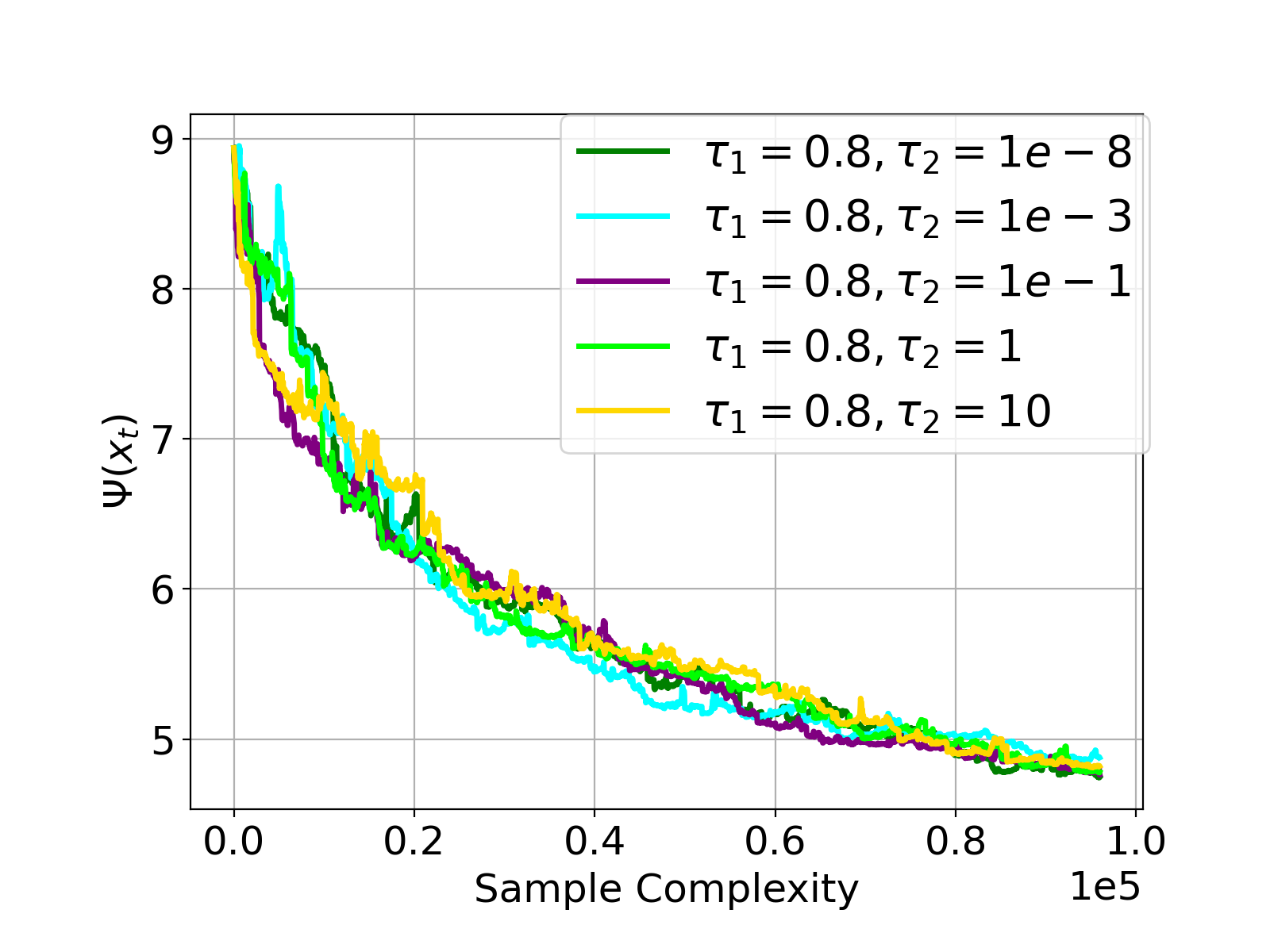}
    \end{subfigure}

    \caption{Effects of $\tau_1, \tau_2$ for IAN-SGD convergence}
    \label{PR DRO effects of tau}
\end{figure}
As we can see from these figures, IAN-SGD converges slightly slower as $\tau_1$ increases for both the phase retrieval and DRO problems. This observation aligns with Theorem~\ref{Clip SGD converge}, which shows that when $\tau_1 > 0$, the resulting complexity is slightly worse than in the case $\tau_1 = 0$. Regarding $\tau_2$, although the final convergence intervals remain approximately the same, we find that smaller values lead to less stable convergence for phase retrieval. For
DRO, the variation in $\tau_2$ does not significantly affect convergence when learning rate is unified. This might be because for the DRO experiment, the model initialization and data processing have reduced the noise of the stochastic gradients, enabling convergence with flexible choices of $\tau_2$.

\subsection{Ablation Study on Extreme Small Batch Size $|B|, |B'|$}
In this subsection, we study the effects of batch size in IAN-SGD. Specially, we want to justify if IAN-SGD works with $B,B'=\Omega(1)$ in practice. For phase retrieval, we set $B=B'$ and keep other parameters the same as in section \ref{sec: effects of beta}. For comparison, we vary batch sizes to be $\{ 1, 2, 4, 8, 16, 32\}$. To guarantee algorithm converging to same level of magnitude, we fine-tune learning rates accordingly, i.e., for each batch size, we set $\gamma=\{5e^{-4},5e^{-3}, 2e^{-2},4e^{-2}, 0.10, 0.10\}$, respectively. 
Similarly, for DRO, we set $|B|=|B'|$ and keep other parameters the same as section \ref{sec: effects of beta}. For comparison, we vary batch size to be $ \{ 1,2,4, 8, 16, 32\}$, and we fine-tuned learning rates accordingly, i.e., for each batch size, we set $\gamma=\{5e^{-4}, 1e^{-3},1e^{-3}, 5e^{-3}, 5e^{-3}, 1e^{-2}\}$, respectively. Figure \ref{effects of batch size} plots the convergence of IAN-SGD under different batch size choices for Phase Retrieval (left) and DRO (right). We conclude IAN-SGD converges under constant-level batch size $B, B'=\Omega(1)$.
\begin{figure}[ht]
    \centering
    \begin{subfigure}[b]{0.45\textwidth}
        \centering
        \includegraphics[width=1.15\linewidth]{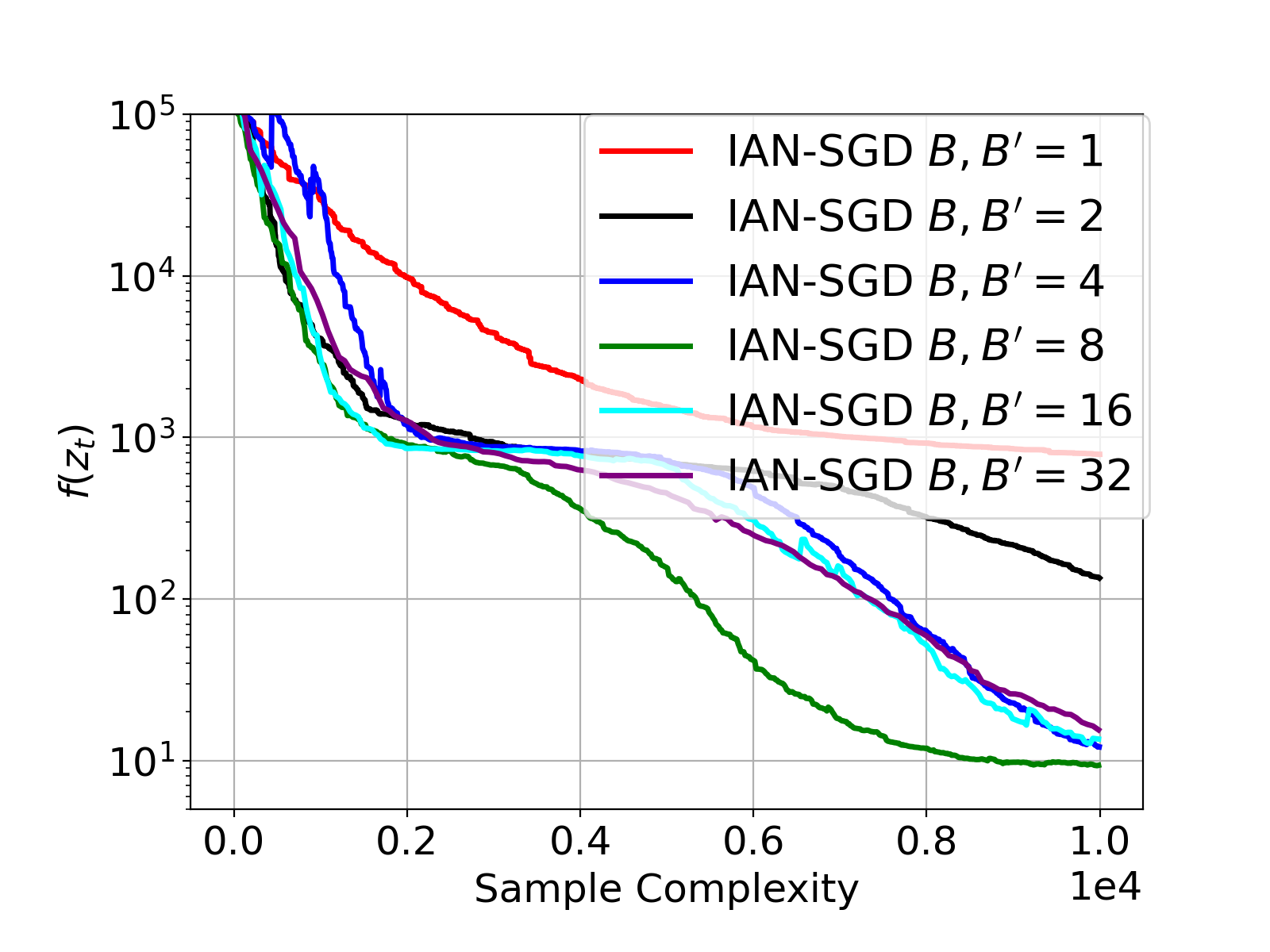}
    \end{subfigure}
    \hfill
    \begin{subfigure}[b]{0.45\textwidth}
        \centering
        \includegraphics[width=1.15\linewidth]{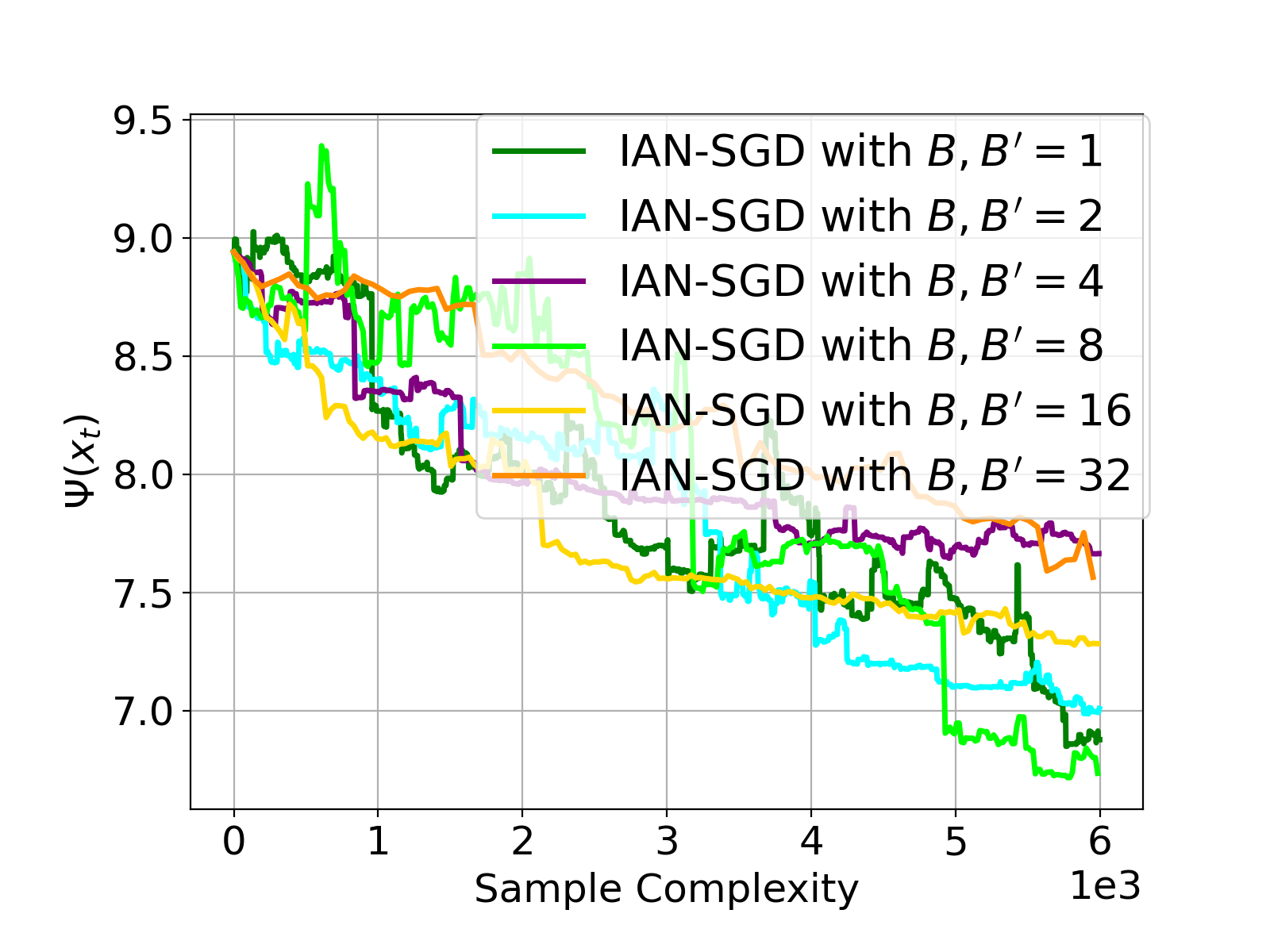}
    \end{subfigure}

    \caption{Effects of batch size on convergence for IAN-SGD}
    \label{effects of extreme small batch size}
\end{figure}

Next, we compare IAN-SGD with other baseline methods under small batch size. To make a trade-off between convergence and stability, we set $|B|=2$ for SGD, NSGD with momentum, Clip-SGD and SPIDER; For SPIDER algorithm \citep{fang2018spidernearoptimalnonconvexoptimization}, we set epoch size to be $20$. For phase retrieval, we fine-tuned learning rate for all algorithms, i.e., $\gamma=1e^{-5}$ for SGD, $\gamma = 1e^{-2}$ for NSGD, NSGD with momentum, $\gamma = 5e^{-3}$ for clipped SGD and IAN-SGD, $\gamma = 5e^{-2}$ for SPIDER. Similarly, for DRO, we set $|B|=2$ for SGD, NSGD with momentum, clipped SGD and SPIDER. We fine-tuned learning rate for all algorithms, i.e., $\gamma=1e^{-6}$ for SGD, $\gamma = 1e^{-3}$ for NSGD, NSGD with momentum, clipped SGD and IAN-SGD, $\gamma = 5e^{-3}$ for SPIDER. We keep other parameters the same as in Section \ref{sec: effects of beta}. The first row of Figure \ref{Comparisons of IAN-SGD under extreme small batch size} plots the comparison results of Phase Retrieval (left) and DRO (right). We find in both problems, normalized methods such as NSGD, SPIDER, clipped SGD are slow to converge under small batch size.

To further justify the effectiveness of IAN-SGD under small batch size, we also test IAN-SGD and other baseline methods using small batch size to train ResNet \citep{resnet} over CIFAR10 \citep{CIFAR10}. For all algorithms, we unify the batch size $B=32$. We fine-tuned learning for all algorithms, i.e., $\gamma = 1e^{-5}$ for SGD, $\gamma = 1e^{-4}$ for Adam, $\gamma = 1e^{-3}$ for Adagrad, $\gamma = 5e^{-2}$ for NSGD and NSGD with momentum, $\gamma = 8e-2$ for clipped SGD and IAN-SGD, the second row of Figure \ref{Comparisons of IAN-SGD under extreme small batch size} plots the corresponding results for training ResNet18 (left) and ResNet50 (right). As a result, we found IAN-SGD still outperforms than most baselines. This verifies the effectiveness of using IAN-SGD with small batch size in practice.

\begin{figure}[ht]
    \centering
    \begin{subfigure}[b]{0.45\textwidth}
        \centering
        \includegraphics[width=1.15\linewidth]{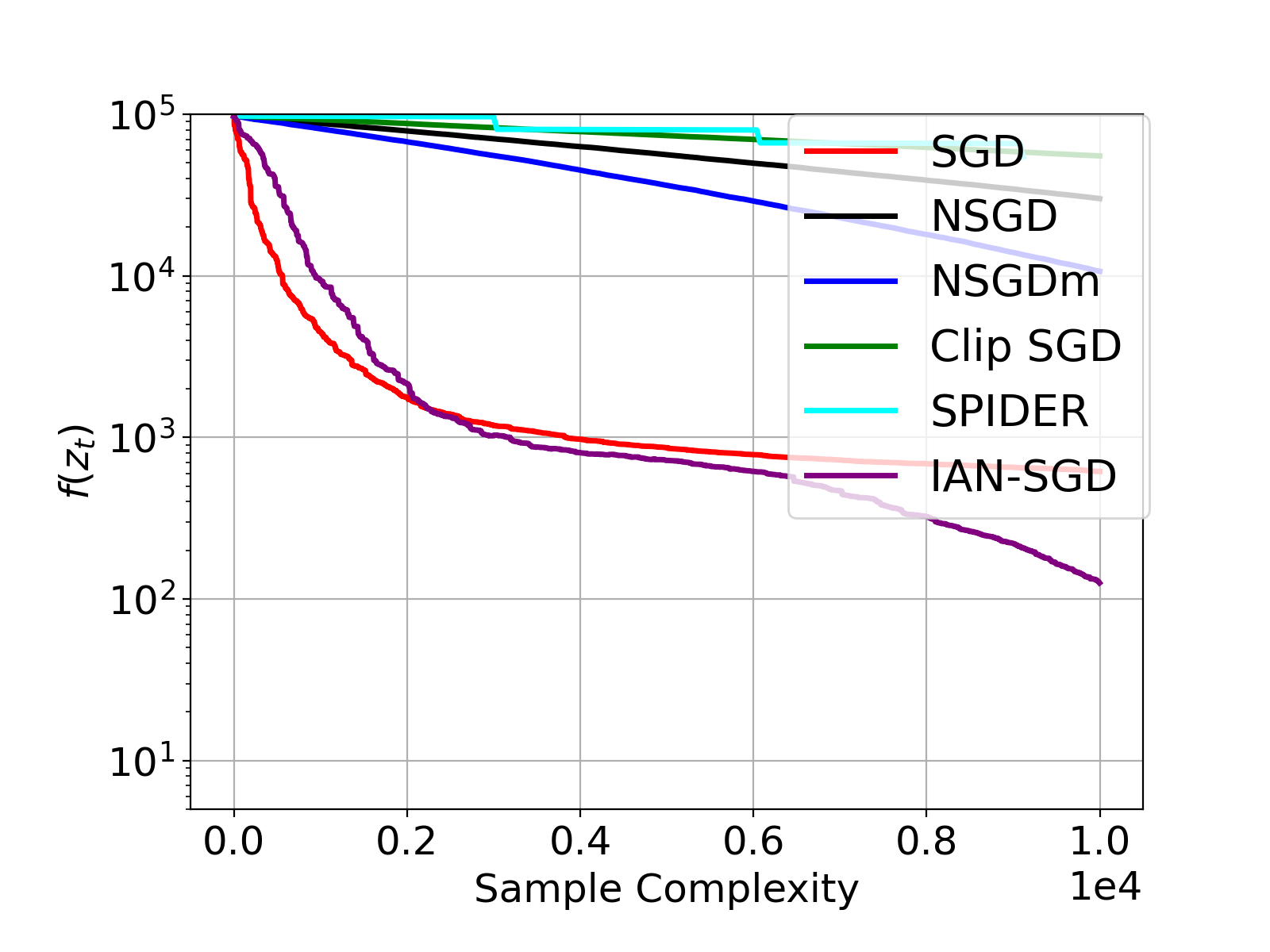}
    \end{subfigure}
    \hfill
    \begin{subfigure}[b]{0.45\textwidth}
        \centering
        \includegraphics[width=1.15\linewidth]{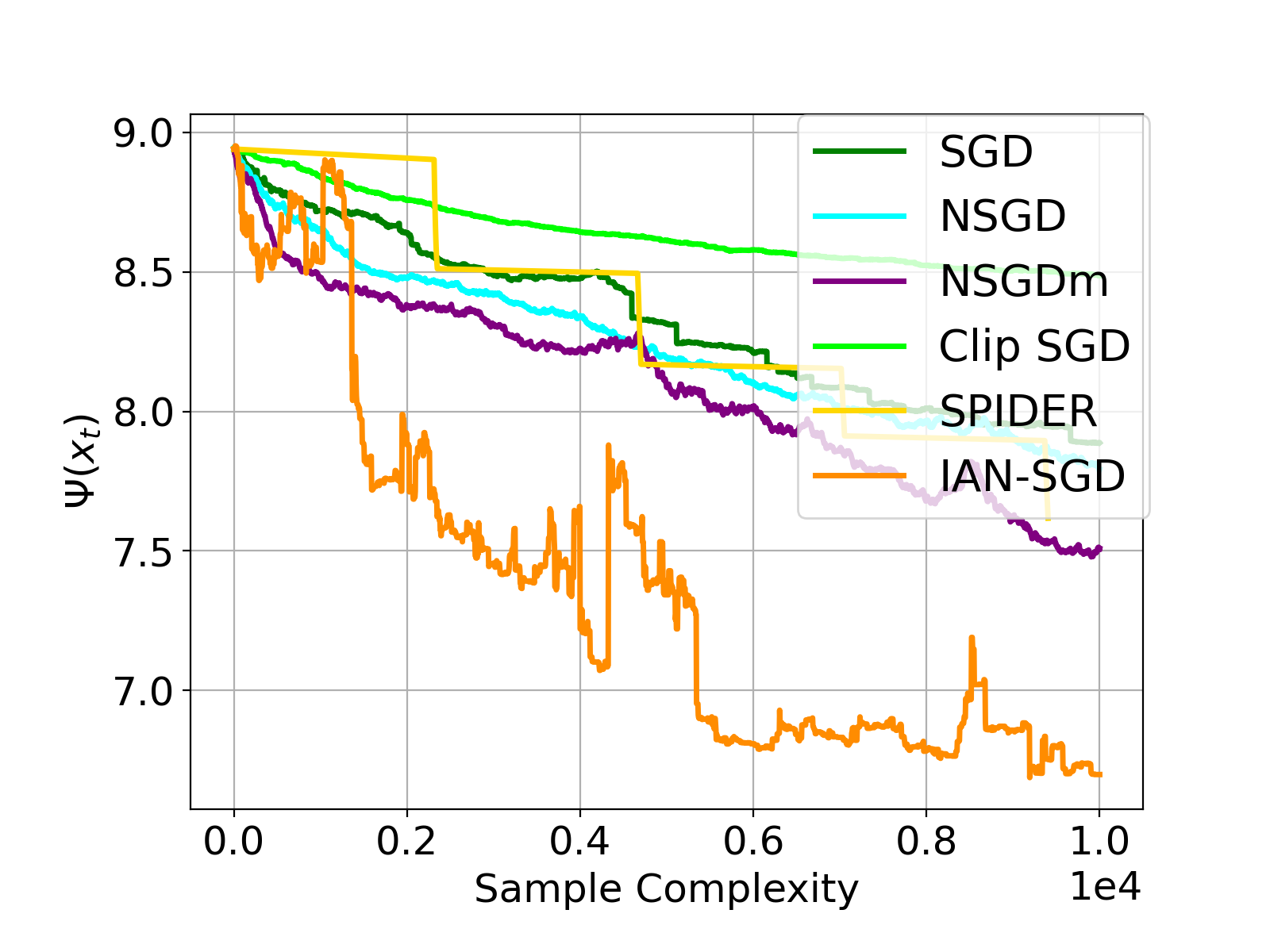}
    \end{subfigure}
    \hfill
    \begin{subfigure}[b]{0.45\textwidth}
        \centering
        \includegraphics[width=1.15\linewidth]{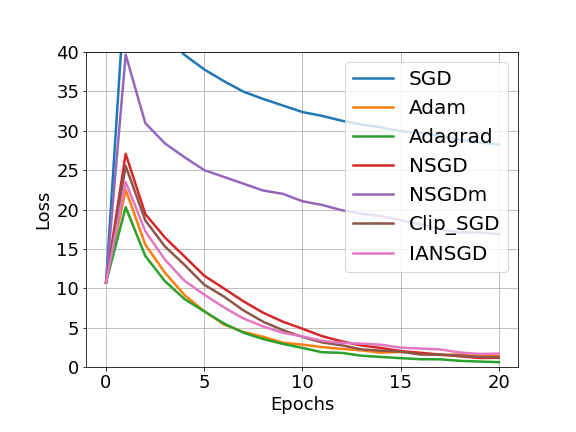}
    \end{subfigure}
    \hfill
    \begin{subfigure}[b]{0.45\textwidth}
        \centering
        \includegraphics[width=1.15\linewidth]{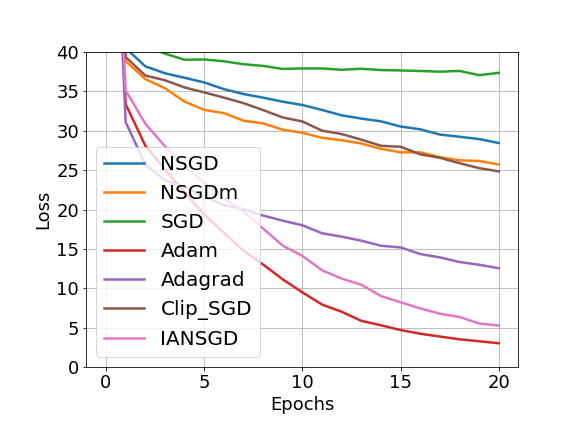}
    \end{subfigure}
    \caption{Comparison of IAN-SGD with other baselines under small batch size}
    \label{Comparisons of IAN-SGD under extreme small batch size}
\end{figure}

\newpage
\section{Proof of Descent Lemma \ref{descent lemma} }\label{Appendix B}
\mainlemmaone*
\begin{proof}\label{proof of descent lemma}
    Denote $w_{\theta} = \theta w'+(1-\theta)w$. By the fundamental theorem of calculus we have $f(w')-f(w)= \int_0^1 \langle \nabla f(w_{\theta}), w'-w\rangle \mathrm{d}\theta$, which implies that
    \begin{align}
        &f(w')-f(w)-\langle \nabla f(w), w'-w \rangle = \int_{0}^{1}\langle \nabla f(w_{\theta}), w'-w\rangle \mathrm{d} \theta - \int_{0}^{1} \langle \nabla f(w), w'-w\rangle \mathrm{d} \theta  \nonumber,
    \end{align}
    Substituting the generalized-smooth condition in Assumption \ref{assum1} into the above equation, we have
    \begin{align}
         &f(w')-f(w)-\langle \nabla f(w), w'-w \rangle \nonumber \\ 
        =& \int_{0}^{1}\langle \nabla f(w_{\theta}), w'-w\rangle \mathrm{d} \theta - \int_{0}^{1} \langle \nabla f(w), w'-w\rangle \mathrm{d} \theta  \nonumber \\
        =& \int_{0}^{1}\langle \nabla f(w_{\theta})-\nabla f(w), w'-w\rangle \mathrm{d} \theta \nonumber\\
        \overset{(i)}{\leq}& \int_{0}^{1} \big \|\nabla f(w_{\theta})-\nabla f(w) \big \| \big \|w'-w \big \|\mathrm{d}\theta \nonumber \\
        \overset{(ii)}{\leq}& \int_{0}^{1} \theta (L_0+L_1 \big \|\nabla f(w) \big \|^{\alpha})\big \|w'-w \big \|^2 \mathrm{d}\theta \nonumber \\
        =& \frac{1}{2}(L_0+L_1\big \|\nabla f(w) \big \|^{\alpha})\big \|w'-w \big \|^2,
    \end{align}
    where (i) applies Cauchy-schwarz inequality on $\langle \nabla f(w_{\theta})-\nabla f(w), w_{\theta}-w\rangle$, and  (ii) is due to Assumption \ref{assum1}, $\|\nabla f(w_{\theta})-\nabla f(w) \big \|\leq (L_0+L_1 \|\nabla f(w) \|^{\alpha})\|w_{\theta}-w\|=\theta(L_0+L_1 \|\nabla f(w) \|^{\alpha})\|w'-w\|$. Re-organizing the above inequality gives the desired result.
\end{proof}
\section{Proof for Nonconvex Phase Retrieval}\label{Appendix C}
The proof of generalized-smooth property for nonconvex Phase retrieval problem is similar as proof in \citet{pmlr-v202-chen23ar} with minor changes. We present the proof details here for completeness.  
\begin{proof}\label{proof of Phase Retrieval}
The objective function of phase retrieval problem can be expressed in the following form
\begin{align}
    F(w)=\frac{2}{m}\sum_{\xi \in D}f_{\xi}(w)=\frac{1}{2m}\sum_{\xi \in D}\big(y_{\xi}-\big|a_{\xi}^{\top} w\big|^2\big)^2,
    \label{phase retrieval objective}
\end{align}
where $f_{\xi}(w)=\frac{1}{4}(y_{\xi}-|a_{\xi}^Tw|^2)^2$.

We prove that for every sample $\xi$, $\nabla f_{\xi}(w)=-(y_{\xi}-|a_{\xi}^{\top}w|^2)(a_{\xi}{a_{\xi}}^{\top})w$ satisfies \eqref{symmetric gs} with $\alpha=\frac{2}{3}$. And thus $F(w)$ satisfies finite-sum versions of \eqref{symmetric gs}, namely
\begin{align}
    \frac{2}{m}\sum_{\xi\in D}\big\| f_{\xi}(w)-f_{\xi}(w')\big\|\leq \big \| w-w'\big \|\cdot\Big(L_0+\frac{L_1}{m}\sum_{\zeta\in D}\frac{\|\nabla f_{\xi}(w)\|^{\alpha}+\|\nabla f_{\xi}(w')\|^{\alpha}}{2}\Big)
    \label{symmetric gs stochastic}
\end{align}

We construct a lower bound of $\|\nabla f_{\xi}(w)\|$ given $w$ and $\xi$ as following
\begin{align}
\big\|\nabla f_{\xi}(w)\big\|^{\frac{2}{3}} 
& =\big\|\big(\big|a_{\xi}^{\top} w\big|^2-y_{\xi}\big)\big(a_{\xi} a_{\xi}^{\top}\big) w\big\|^{\frac{2}{3}} \nonumber\\
& \overset{}{=}  \big\|\big(\big|a_{\xi}^{\top} w\big|^2\cdot a_{\xi}^{\top}w-y_{\xi}a_{\xi}^{\top}w\big)a_{\xi}\big\|^{\frac{2}{3}}\nonumber \\
& \overset{(i)}{=} \Big| \big| a_{\xi}^{\top} w\big|^3-y_{\xi}\big|a_{\xi}^{\top} w\big|\Big|^{\frac{2}{3}}\big\|a_{\xi}\big\|^{\frac{2}{3}} \nonumber\\
& \overset{(ii)}{\geq} \big(\big|a_{\xi}^{\top} w\big|^2-\Big|y_{\xi} \big| a_{\xi}^{\top} w \big| \Big|^{\frac{2}{3}}\big)\big\|a_{\xi}\big\|^{\frac{2}{3}} \nonumber\\
& \overset{(iii)}{\geq} \frac{2}{3}\big(\big|a_{\xi}^{\top} w\big|^2-\big|y_{\xi}\big|\big)\big\|a_{\xi}\big\|^{\frac{2}{3}},
\label{PRgradient}
\end{align}
where (i) extracts scalar $\Big| \big| a_{\xi}^{\top} w\big|^3-y_{\xi}\big|a_{\xi}^{\top} w\big|\Big|^{\frac{2}{3}}$ out; (ii) uses inequality $|a-b|^{\frac{2}{3}}\geq |a|^{\frac{2}{3}}-|b|^{\frac{2}{3}}$ holding for any $a,b\in \mathbf{R}$ (sub-additive property of function $f(x)=x^{2/3}$); (iii)  applies young's inequality $|y_{\xi}|^{\frac{2}{3}}|a_{\xi}^{\top}w|^{\frac{2}{3}}\leq \frac{1}{3}\|a_{\xi}^{\top}w\|^2+\frac{2}{3}|y_{\xi}|$.

Then, for any $w, w'\in \mathbf{R}^d$, we have
\begin{align}
& \big\|\nabla f_{\xi}\big(w^{\prime}\big)-\nabla f_{\xi}(w)\big\| \nonumber \\
=&\big\|\big(\big|a_{\xi}^{\top} w^{\prime}\big|^2-y_{\xi}\big)\big(a_{\xi} a_{\xi}^{\top}\big) w^{\prime}-\big(\big|a_{\xi}^{\top} w\big|^2-y_{\xi}\big)\big(a_{\xi} a_{\xi}^{\top}\big) w\big\| \nonumber \\
=& \frac{1}{2}\big\|2\big(\big|a_{\xi}^{\top} w^{\prime}\big|^2-y_{\xi}\big)\big(a_{\xi} a_{\xi}^{\top}\big) w^{\prime}-2\big(\big|a_{\xi}^{\top} w\big|^2-y_{\xi}\big)\big(a_{\xi} a_{\xi}^{\top}\big) w +\big(\big|a_{\xi}^{\top}w'\big|^2-y_{\xi}\big)\big(a_{\xi}a_{\xi}^{\top}\big)w-\big(\big|a_{\xi}^{\top}w\big|^2-y_{\xi}\big)(a_{\xi}a_{\xi}^{\top})w' \nonumber\\
&\quad -\big(\big|a_{\xi}^{\top}w'\big|^2-y_{\xi}\big)\big(a_{\xi}a_{\xi}^{\top}\big)w+\big(\big|a_{\xi}^{\top}w\big|^2-y_{\xi}\big)(a_{\xi}a_{\xi}^{\top})w'
\big\|\nonumber\\
=& \frac{1}{2}\big \|\big(\big|a_{\xi}^{\top} w'\big|^2-y_{\xi}\big)\big(a_{\xi} a_{\xi}^{\top}\big) w'-\big(\big|a_{\xi}^{\top}w'\big|^2-y_{\xi}\big)\big(a_{\xi}a_{\xi}^{\top}\big)w+\big(\big|a_{\xi}^{\top}w\big|^2-y_{\xi}\big)(a_{\xi}a_{\xi}^{\top})w'
-\big(\big|a_{\xi}^{\top} w\big|^2-y_{\xi}\big)\big(a_{\xi} a_{\xi}^{\top}\big) w \nonumber\\
&\quad+\big(\big|a_{\xi}^{\top} w'\big|^2-y_{\xi}\big)\big(a_{\xi} a_{\xi}^{\top}\big) w'+\big(\big|a_{\xi}^{\top}w'\big|^2-y_{\xi}\big)\big(a_{\xi}a_{\xi}^{\top}\big)w -\big(\big|a_{\xi}^{\top}w\big|^2-y_{\xi}\big)(a_{\xi}a_{\xi}^{\top})w'
-\big(\big|a_{\xi}^{\top} w\big|^2-y_{\xi}\big)\big(a_{\xi} a_{\xi}^{\top}\big) w\big \|\nonumber\\
=& \frac{1}{2}\big\|\big(\big|a_{\xi}^{\top} w^{\prime}\big|^2+\big|a_{\xi}^{\top} w\big|^2-2 y_{\xi}\big)\big(a_{\xi} a_{\xi}^{\top}\big)\big(w^{\prime}-w\big) +\big(\big|a_{\xi}^{\top} w^{\prime}\big|^2-\big|a_{\xi}^{\top} w\big|^2\big)\big(a_{\xi} a_{\xi}^{\top}\big)\big(w^{\prime}+w\big)\big\| \nonumber \\
\overset{(i)}{\leq} & \frac{1}{2}\big\|\big(\big|a_{\xi}^{\top} w^{\prime}\big|^2+\big|a_{\xi}^{\top} w\big|^2-2 y_{\xi}\big)\big(a_{\xi} a_{\xi}^{\top}\big)\big(w^{\prime}-w\big) \big \|
+\frac{1}{2}\big \|\big(\big|a_{\xi}^{\top} w^{\prime}\big|^2-\big|a_{\xi}^{\top} w\big|^2\big)\big(a_{\xi} a_{\xi}^{\top}\big)\big(w^{\prime}+w\big)\big\| \nonumber \\
=& \frac{1}{2}\big\|\big(\big|a_{\xi}^{\top} w^{\prime}\big|^2+\big|a_{\xi}^{\top} w\big|^2-2 y_{\xi}\big)\big(a_{\xi} a_{\xi}^{\top}\big)\big(w^{\prime}-w\big) \big \|
+\frac{1}{2}\big \|\big(\big|a_{\xi}^{\top} w^{\prime}\big|^2-\big|a_{\xi}^{\top} w\big|^2\big)a_{\xi}\big(a_{\xi}^{\top}w^{\prime}+a_{\xi}^{\top}w\big)\big\| \nonumber \\
\overset{(ii)}{\leq}&  \frac{1}{2}\big\|\big(\big|a_{\xi}^{\top} w^{\prime}\big|^2+\big|a_{\xi}^{\top} w\big|^2-2 y_{\xi}\big)\big(a_{\xi} a_{\xi}^{\top}\big)\big(w^{\prime}-w\big) \big \|
+\frac{1}{2}\big \|\big(\big|a_{\xi}^{\top} w^{\prime}\big|^2-\big|a_{\xi}^{\top} w\big|^2\big)a_{\xi}\big(\big|a_{\xi}^{\top}w^{\prime}\big|+\big| a_{\xi}^{\top}w\big|\big)\big\| \nonumber \\
\overset{(iii)}{\leq}& \frac{1}{2}\big\|a_{\xi}\big\|^2\big(\big|a_{\xi}^{\top} w^{\prime}\big|^2+\big|a_{\xi}^{\top} w\big|^2+2\big|y_{\xi}\big|\big)\big\|w^{\prime}-w\big\| +\frac{1}{2}\big\|a_{\xi}\big\|^2\big(\big|a_{\xi}^{\top} w^{\prime}\big|+\big|a_{\xi}^{\top} w\big|\big)^2\big\|w^{\prime}-w\big\| \nonumber \\
\overset{(iv)}{\leq}& \frac{1}{2}\big\|w^{\prime}-w\big\|\big\|a_{\xi}\big\|^2\big(3\big|a_{\xi}^{\top} w^{\prime}\big|^2+3\big|a_{\xi}^{\top} w\big|^2+2\big|y_{\xi}\big|\big) \nonumber\\
\leq& \frac{1}{2}\big\|w^{\prime}-w\big\|\big\|a_{\xi}\big\|^{\frac{4}{3}}\big\|a_{\xi}\big\|^{\frac{2}{3}}\cdot\big(3\big|a_{\xi}^{\top} w^{\prime}\big|^2+3\big|a_{\xi}^{\top} w\big|^2-3\big|y_{\xi}\big|-3\big|y_{\xi}\big|+8\big|y_{\xi}\big|\big)\nonumber\\
\overset{(v)}{\leq}&\big\|w^{\prime}-w\big\|\big(\frac{9}{4} a_{\max }^{\frac{4}{3}}\big\|\nabla f_{\xi}\big(w^{\prime}\big)\big\|^{\frac{2}{3}}+\frac{9}{4} a_{\max }^{\frac{4}{3}}\big\|\nabla f_{\xi}(w)\big\|^{\frac{2}{3}}+4 y_{\max } a_{\max }^2\big),
\end{align}
where (i) uses triangular inequality; (ii) uses the fact $a_{\xi}^Tw \leq | a_{\xi}^Tw|$; (iii) uses the fact $\|a_{\xi}a_{\xi}^{\top}\|=\|a_{\xi} \|^2$ and  \eqref{technical ineq}; (iv) uses $(|a_{\xi}^{\top}w'|+|a_{\xi}^{\top}w|)^2\leq 2|a_{\xi}^{\top}w'|^2+2|a_{\xi}^{\top}w|^2$; (v) uses \eqref{PRgradient} and denotes $y_{\max }=\max\|y_{\xi}\|$ and $a_{\max }=\max\big\|a_{\xi}\big\|$. 

The inequality used in (iii) is constructed as following
\begin{align}
\big|\big|a_{\xi}^{\top} w^{\prime}\big|^2-\big|a_{\xi}^{\top} z\big|^2\big|=&\big(\big|a_{\xi}^{\top} w^{\prime}\big|+\big|a_{\xi}^{\top} w\big|\big)\big(\big|a_{\xi}^{\top} w^{\prime}\big|-\big|a_{\xi}^{\top} w\big|\big)\nonumber\\ \leq&\big(\big|a_{\xi}^{\top} w^{\prime}\big|+\big|a_{\xi}^{\top} w\big|\big)\big\|a_{\xi}^{\top}\big(w^{\prime}-w\big)\big\|\nonumber\\ \leq&\big\|a_{\xi}^{\top}\big\|\big(\big|a_{\xi}^{\top} w^{\prime}\big|+\big|a_{\xi}^{\top} w\big|\big)\big\|w^{\prime}-w\big\|.
\label{technical ineq}
\end{align}
Thus, $f_{\xi}(w)=\frac{1}{4}(y_{\xi}-|a_{\xi}^Tw|^2)^2$ satisfies \eqref{symmetric gs} with $\alpha=\frac{2}{3}$, $L_0=4y_{\max}a_{\max}^2$, and $L_1=\frac{9}{2}a^{\frac{4}{3}}_{\max}$. Thus, $F(w)$ satisfies \eqref{symmetric gs stochastic} with $\alpha=\frac{2}{3}$, $L_0=8y_{\max}a_{\max}^2$, and $L_1={9}a^{\frac{4}{3}}_{\max}$.

\end{proof}


\section{Proof of Descent Lemma under Generalized P{\L} condition}\label{Appendix D}
\textbf{Lemma 5 in} \citet{pmlr-v202-chen23ar}.\label{technical lemma}
    For any $x\geq 0$, $C\in [0,1]$, $\Delta>0$, and $0\leq w\leq w'$ such that $\Delta\geq w'-w$, we have the following inequality hold
    \begin{align}
        Cx^{w}\leq x^{w'}+C^{\frac{w'}{\Delta}}.
        \label{Technical Lemma's inequality}
    \end{align}
    
The proof details for this lemma can be found at \citet{pmlr-v202-chen23ar}, Lemma 5 at Appendix.

\begin{lemma}[Descent Lemma under Generalized PL condition]
Let Assumption \ref{assum1} and \ref{assum2} hold. Apply AN-GD with $\beta\in [\alpha,1]$. Choose target error $0 < \epsilon \leq \min\{1, 1/2\mu\}$.
Let step size $\gamma = \frac{(2\mu\epsilon)^{\beta/\rho}}{8(L_0+L_1)+1}$.
Denote $\Delta_t = f(w_t)-f^*$, then we have 
\begin{align}
  \Delta_{t+1} \leq \Delta_t - \frac{\gamma (2\mu)^{\frac{2-\beta}{\rho}}}{2} \Delta_t^{\frac{2-\beta}{\rho}}+\frac{\gamma}{4}(2\mu\epsilon)^{\frac{2-\beta}{\rho}}, \quad \forall t.
\end{align}
\end{lemma}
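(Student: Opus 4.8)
The plan is to combine the descent lemma with the explicit AN-GD update to get a one-step decrease, absorb the resulting second-order term into half of the first-order term plus an additive constant, and finally invoke the generalized P{\L} inequality to convert the gradient norm into a function-value gap.

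First I would apply the descent lemma (Lemma~\ref{descent lemma}) along the trajectory with $w=w_{t+1}$ and $w'=w_t$. Since the AN-GD update reads $w_{t+1}-w_t=-\gamma\,\nabla f(w_t)/\|\nabla f(w_t)\|^{\beta}$, writing $g_t:=\|\nabla f(w_t)\|$ gives $\langle\nabla f(w_t),\,w_{t+1}-w_t\rangle=-\gamma g_t^{2-\beta}$ and $\|w_{t+1}-w_t\|^2=\gamma^2 g_t^{2-2\beta}$. Subtracting $f^*$ then yields the one-step inequality
\begin{align}
\Delta_{t+1}\le \Delta_t-\gamma g_t^{2-\beta}+\frac{\gamma^2}{2}\big(L_0+L_1 g_t^{\alpha}\big)\,g_t^{2-2\beta}. \nonumber
\end{align}

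The heart of the argument — and the step I expect to be the main obstacle — is to show that the second-order term is dominated by half of the first-order term up to a constant noise floor, namely
\begin{align}
\gamma\big(L_0+L_1 g_t^{\alpha}\big)\,g_t^{2-2\beta}\le g_t^{2-\beta}+\tfrac{1}{2}(2\mu\epsilon)^{(2-\beta)/\rho}. \nonumber
\end{align}
To establish this I would normalize by $E:=(2\mu\epsilon)^{1/\rho}$, set $u:=g_t/E$, and use the prescribed step size $\gamma=E^{\beta}/K$ with $K:=8(L_0+L_1)+1$. After dividing through by $E^{2-\beta}$ and using $E\le 1$ (which follows from $\epsilon\le 1/2\mu$) together with $\alpha\ge 0$ to discard the spurious factor $E^{\alpha}$ on the $L_1$ term, the claim reduces to the homogeneous inequality $\tfrac{L_0}{K}u^{2-2\beta}+\tfrac{L_1}{K}u^{\alpha+2-2\beta}\le u^{2-\beta}+\tfrac12$ for all $u\ge 0$. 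The constraints $\alpha\le\beta\le1$ guarantee the exponent orderings $0\le 2-2\beta\le 2-\beta$ and $0\le \alpha+2-2\beta\le 2-\beta$, so a two-regime split suffices: for $u\ge1$ each monomial is bounded by $u^{2-\beta}$, while for $u<1$ each is bounded by $1$; since $L_0/K,L_1/K\le\tfrac18$, both regimes give the bound. Alternatively, the technical inequality \eqref{Technical Lemma's inequality} can be applied to each monomial. The only delicate bookkeeping is tracking these exponent inequalities and verifying that $E\le1$ lets the leftover powers of $E$ be bounded by $1$.

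Finally I would substitute this bound back to obtain $\Delta_{t+1}\le\Delta_t-\tfrac{\gamma}{2}g_t^{2-\beta}+\tfrac{\gamma}{4}(2\mu\epsilon)^{(2-\beta)/\rho}$ and then invoke Assumption~\ref{assum2}. Because $2-\beta\ge 0$, raising the P{\L} inequality $g_t^{\rho}\ge 2\mu\Delta_t$ to the power $(2-\beta)/\rho$ yields $g_t^{2-\beta}\ge(2\mu)^{(2-\beta)/\rho}\Delta_t^{(2-\beta)/\rho}$; using this to lower-bound the $-\tfrac{\gamma}{2}g_t^{2-\beta}$ term produces exactly the claimed recursion. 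Everything after the key second-order bound is routine algebra.
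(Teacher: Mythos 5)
Your proof is correct, and its overall skeleton --- descent lemma evaluated on the AN-GD step, absorb the second-order term into half of the first-order term plus a constant floor, then invoke the generalized P{\L} inequality --- is the same as the paper's. Where you genuinely diverge is in the execution of the key absorption step. The paper applies the imported technical inequality $Cx^{w}\leq x^{w'}+C^{w'/\Delta}$ (Lemma 5 of \citet{pmlr-v202-chen23ar}) separately to the monomials $2L_0\gamma\|\nabla f(w_t)\|^{2-2\beta}$ and $2L_1\gamma\|\nabla f(w_t)\|^{2+\alpha-2\beta}$, combines the resulting constants via $a^{\tau}+b^{\tau}\leq(a+b)^{\tau}$ with $\tau=2/\beta-1$, and only afterwards substitutes the step size to identify the floor $\frac{\gamma}{4}(2\mu\epsilon)^{(2-\beta)/\rho}$; because the exponent $2/\beta-1$ degenerates at $\beta=0$, the paper must handle $\alpha=\beta=0$ in a separate case with a different calculation. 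You instead homogenize by $E=(2\mu\epsilon)^{1/\rho}\leq 1$, which reduces the whole step to the scale-free inequality
\begin{align}
\frac{L_0}{K}u^{2-2\beta}+\frac{L_1}{K}u^{\alpha+2-2\beta}\leq u^{2-\beta}+\frac{1}{2},\qquad K=8(L_0+L_1)+1,\nonumber
\end{align}
settled by the elementary split $u\geq 1$ versus $u<1$ using only the exponent orderings forced by $0\leq\alpha\leq\beta\leq 1$ and $\frac{L_0+L_1}{K}\leq\frac{1}{8}$; I verified the bookkeeping (the factor $E^{\alpha}\leq 1$ is discarded correctly, and the constants propagate to exactly $-\frac{\gamma}{2}g_t^{2-\beta}+\frac{\gamma}{4}(2\mu\epsilon)^{(2-\beta)/\rho}$). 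Your route buys two things: it is self-contained (no external lemma), and it covers $\beta=0$ uniformly, eliminating the paper's case split. What the paper's route buys is reusability: the same technical inequality is invoked again in its stochastic P{\L} analysis (Appendix \ref{Appendix J}), so the deterministic and stochastic proofs stay structurally parallel, whereas your normalization trick relies on the exact deterministic step-size form $\gamma=E^{\beta}/K$.
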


\begin{proof}
We consider two complementary cases $\alpha>0$ and $\alpha=0$.

When $\alpha>0$, we have $\beta\ge\alpha>0$, and by the descent lemma \ref{descent lemma}, we have that
\begin{align}
& f\big(w_{t+1}\big)-f\big(w_t\big) \nonumber\\
\overset{(i)}{\leq}& \nabla f\big(w_t\big)^{\top}\big(w_{t+1}-w_t\big)+\frac{1}{2}\big(L_0+L_1\big\|\nabla f\big(w_t\big)\big\|^\alpha\big)\big\|w_{t+1}-w_t\big\|^2 \nonumber\\
\overset{(ii)}{=}&-\gamma\big\|\nabla f\big(w_t\big)\big\|^{2-\beta}+\frac{\gamma}{4}\big(2 L_0 \gamma \cdot\big\|\nabla f\big(w_t\big)\big\|^{2-2 \beta}+2 L_1 \gamma \cdot\big\|\nabla f\big(w_t\big)\big\|^{2+\alpha-2 \beta}\big) \nonumber\\
\overset{(iii)}{\leq}&-\gamma\big\|\nabla f\big(w_t\big)\big\|^{2-\beta} +\frac{\gamma}{4}\big(2\big\|\nabla f\big(w_t\big)\big\|^{2-\beta}+\big(2 L_0 \gamma\big)^{\frac{2}{\beta}-1}+\big(2 L_1 \gamma\big)^{\frac{2}{\beta}-1}\big) \nonumber\\
\overset{(iv)}{\leq}&-\frac{\gamma}{2}\big\|\nabla f\big(w_t\big)\big\|^{2-\beta}+\frac{\gamma^{\frac{2}{\beta}}}{4}\big(2 L_0+2 L_1\big)^{\frac{2}{\beta}-1} \nonumber\\
\overset{(v)}{\leq}&-\frac{\gamma}{2}\big\|\nabla f\big(w_t\big)\big\|^{2-\beta}+\frac{(2\mu\epsilon)^{\frac{2}{\rho}}}{(8(L_0+L_1)+1)^{\frac{2}{\beta}}}(\frac{1}{4})(8(L_0+L_1)+1)^{\frac{2}{\beta}-1} \nonumber\\
\overset{(vi)}{\leq}& -\frac{\gamma}{2}\big\|\nabla f\big(w_t\big)\big\|^{2-\beta} +\frac{1}{4}\frac{(2\mu\epsilon)^{\frac{\beta}{\rho}}(2\mu\epsilon)^{\frac{2-\beta}{\rho}}}{8(L_0+L_1)+1}\nonumber\\
=& -\frac{\gamma}{2}\big\|\nabla f\big(w_t\big)\big\|^{2-\beta}+\frac{\gamma}{4} (2\mu\epsilon)^{\frac{2-\beta}{\rho}},
\end{align}
where (i) follows from the descent lemma \ref{descent lemma}; (ii) follows from the AN-GD update rule by replacing $w_{t+1}-w_t$ with $-\gamma\frac{\nabla f(w_t)}{\|\nabla f(w_t) \|^{\beta}}$; (iii) follows from the technical lemma 5 above proved in \citet{pmlr-v202-chen23ar} by letting $\omega' = 2-\beta$, $\Delta = \beta$ and substituting \eqref{Technical Lemma's inequality} to $2L_0 \gamma \|\nabla f(w_t) \|^{2-2\beta}$ and $2L_1 \gamma \|\nabla f(w_t) \|^{2+\alpha-2\beta}$ (Let $x=\|\nabla f(w_t) \|$ and $C=2L_0\gamma \text{ or }2L_1\gamma$ , $C\in[0,1]$ because $\epsilon \leq 1/2\mu$);
(iv) follows from the fact that $a^{\tau}+b^{\tau}\leq(a+b)^{\tau}$ with $\tau = {2}/{\beta}-1\geq1$ and $a,b\geq 0$, $(v)$ follows from the step size rule $\gamma = {(2\mu\epsilon)^{\beta/\rho}}/{(8(L_0+L_1)+1)}$; (vi) follows from the facts that $2L_0+2L_1 \leq 8(L_0+L_1)+1$ and $(2L_0+2L_1)^{\frac{2}{\beta}-1} \leq (8(L_0+L_1)+1)^{\frac{2}{\beta}-1}$ when $\beta>0$.

When $\alpha=0$, the same argument applies if $\beta>0$. Next, we consider the case $\alpha =\beta =0$, and we have $\gamma = \frac{1}{8(L_0+L_1)+1}$. We obtain that
 \begin{align}
     & f\big(w_{t+1}\big)-f\big(w_t\big) \nonumber\\
\overset{(i)}{\leq}& \nabla f\big(w_t\big)^{\top}\big(w_{t+1}-w_t\big)+\frac{1}{2}\big(L_0+L_1\big)\big\|w_{t+1}-w_t\big\|^2 \nonumber\\
\overset{(ii)}{=}&-\gamma\big\|\nabla f\big(w_t\big)\big\|^{2}+\frac{\gamma}{4}\big(2 L_0 \gamma \cdot\big\|\nabla f\big(w_t\big)\big\|^{2}+2 L_1 \gamma \cdot\big\|\nabla f\big(w_t\big)\big\|^{2}\big) \nonumber\\
\overset{}{=}&-\gamma\big\|\nabla f\big(w_t\big)\big\|^{2} +\frac{\gamma^2}{4}\big(2L_0+2L_1\big)\big\|\nabla f(w_t) \big\|^2 \nonumber\\
\overset{(iii)}{\leq }&-{\gamma}\big\|\nabla f\big(w_t\big)\big\|^{2}+\frac{\gamma}{4}\frac{1}{8(L_0+L_1)+1}\big(8 (L_0+ L_1)+1\big) \big\|\nabla f(w_t) \big\|^2\nonumber\\
\overset{(iv)}{\leq}&-{\gamma}\big\|\nabla f\big(w_t\big)\big\|^{2}+\frac{\gamma}{2}\big \|\nabla f(w_t) \big \|^2 \nonumber\\
\overset{(v)}{\leq} &-{\gamma}\big\|\nabla f\big(w_t\big)\big\|^{2}+\frac{\gamma}{2}\big \|\nabla f(w_t) \big \|^2+\frac{\gamma}{4} (2\mu\epsilon)^{\frac{2}{\rho}} \nonumber\\
=& -\frac{\gamma}{2}\big\|\nabla f\big(w_t\big)\big\|^{2}+\frac{\gamma}{4} (2\mu\epsilon)^{\frac{2}{\rho}},
\end{align}
where (i) follows from the descent lemma \ref{descent lemma} with $\alpha=0$; (ii) follows from the update rule of AN-GD by replacing $w_{t+1}-w_t$ with $-\gamma \nabla f(w_t)$ (Normalization term vanishes due to $\beta=0$); (iii) follows the fact that $2L_0+2L_1\leq 8(L_0+L_1)+1$;
(iv) follows from the fact $\frac{\gamma}{4}\leq \frac{\gamma}{2}$ and (v) holds by adding additional positive number $\frac{\gamma}{4} (2\mu\epsilon)^{\frac{2}{\rho}}$.

Thus, in conclusion, for any $\beta \in [\alpha,1]$, we always have the following descent lemma
\begin{align}
    f\big(w_{t+1}\big)-f\big(w_t\big)\leq -\frac{\gamma}{2}\big\|\nabla f\big(w_t\big)\big\|^{2-\beta}+\frac{\gamma}{4} (2\mu\epsilon)^{\frac{2-\beta}{\rho}}
    \label{unified descent lemma}
\end{align}
under the step size rule $\gamma = \frac{(2\mu\epsilon)^{\beta/\rho}}{8(L_0+L_1)+1}$. Moreover, by the generalized P{\L}-condition in Definition \ref{gen PL}, we have
\begin{align}
    \big \|\nabla f(w) \big \| \geq (2\mu)^{\frac{1}{\rho}} \big(f(w)-f^*\big)^{\frac{1}{\rho}}, \nonumber
\end{align}
which implies that
\begin{align}
    \big \|\nabla f(w_t) \big \|^{2-\beta} \geq (2\mu)^{\frac{2-\beta}{\rho}} \big(f(w_t)-f^*\big)^{\frac{2-\beta}{\rho}} = (2\mu)^{\frac{2-\beta}{\rho}}\Delta_t^{\frac{2-\beta}{\rho}}.
    \label{equalPL}
\end{align}
Substituting \eqref{equalPL} into \eqref{unified descent lemma}, we have
\begin{align}
    f(w_{t+1}) - f(w_t) \leq -\frac{\gamma}{2}(2\mu)^{\frac{2-\beta}{\rho}}\big(f(w_{t}) - f^* \big)^{\frac{2-\beta}{\rho}} + \frac{\gamma}{4}(2\mu\epsilon)^{\frac{2-\beta}{\rho}}. \nonumber
\end{align}
Subtracting $f^*$ on both sides, we have 
\begin{align}
    &f(w_{t+1})-f^* \leq f(w_t) - f^* -\frac{\gamma}{2}(2\mu)^{\frac{2-\beta}{\rho}}(f(w)-f^*)^{\frac{2-\beta}{\rho}} + \frac{\gamma}{4}(2\mu\epsilon)^{\frac{2-\beta}{\rho}}. \nonumber
\end{align}
Denote $\Delta_t = f(w_t) - f^*$, the above inequality can be written as 
\begin{align}
    \Delta_{t+1} \leq \Delta_t- \frac{\gamma(2\mu)^{\frac{2-\beta}{\rho}}}{2}\Delta_t^{\frac{2-\beta}{\rho}}+\frac{\gamma}{4} (2\mu\epsilon)^{\frac{2-\beta}{\rho}}, \quad \forall t. 
    \label{descentPL}
\end{align}

\end{proof}
\section{Proof of Theorem \ref{beta GD convergence}}\label{Appendix E}
\mainthmone*
\begin{proof}\label{Proof of beta GD convergence}
Given target error $0< \epsilon \leq \min \{ 1,1/{2\mu}\}$,
define stopping time as the first time that $\Delta_t$ satisfies $\Delta_t\leq \epsilon$, i.e., $
T= \inf \{t | \Delta_t \leq \epsilon \}$. In the following proof, we provide an estimate of such stopping time $T$.

For any $t$ satisfies $\Delta_t > \epsilon$, we must have
$\Delta_t^{(2-\beta)/{\rho}}>\epsilon^{(2-\beta)/{\rho}}$ since $\frac{2-\beta}{\rho}>0$. This implies that  $\frac{\gamma}{4}(2\mu\epsilon)^{(2-\beta)/{\rho}}<\frac{\gamma}{4}(2\mu)^{(2-\beta)/{\rho}}\Delta_t^{(2-\beta)/{\rho}}$. Thus, \eqref{descentPL} further reduces to
the following descent inequality
\begin{align}
\Delta_{t+1} \leq \Delta_t - \frac{\gamma (2\mu)^{\frac{2-\beta}{\rho}}}{4} \Delta_t^{\frac{2-\beta}{\rho}}. 
\label{descentPL2}
\end{align}


For above \eqref{descentPL2}, we provide convergence rate based on the value of $\rho$ and $\beta$.\\
\noindent\textbf{Case I: $\beta<2-\rho$}\\
This is equivalent to $\frac{2-\beta}{\rho}>1$. For simplicity of notation, denote $\theta=\frac{2-\beta}{\rho}$.
When $\theta > 1$, we have following inequalities
\begin{align}
    \Delta_{t+1}\leq& \Delta_t \nonumber\\
    \Delta_{t+1}^{\theta} \leq& \Delta_t^{\theta} \nonumber \\
    \Delta_{t+1}^{-\theta} \geq& \Delta_t^{-\theta}.
    \label{delta_relation}
\end{align}
Now define an auxiliary function $\Phi(t)=\frac{1}{\theta-1}t^{1-\theta}$, where its derivative is $\Phi'(t)=-t^{-\theta}$ 
We divide the last inequality of \eqref{delta_relation} into two different cases for analysis. One is the case when $\Delta_t^{-\theta}\leq\Delta_{t+1}^{-\theta}\leq 2\Delta_t^{-\theta}$, another is the case when $\Delta_{t+1}^{-\theta}\geq 2\Delta_t^{-\theta}$.

When $\Delta_{t}^{-\theta}\leq \Delta_{t+1}^{-\theta }\leq 2 \Delta_t^{-\theta}$, we have
\begin{align}
    \Phi(\Delta_{t+1})-\Phi(\Delta_t) =& \int_{\Delta_t}^{\Delta_{t+1}}\Phi'(t)\mathrm{d}t = \int_{\Delta_{t+1}}^{\Delta_t}t^{-\theta}\mathrm{d}t \nonumber\\
    &\overset{(i)}{\geq} (\Delta_{t}-\Delta_{t+1})\Delta_t^{-\theta} \nonumber\\
    &\overset{(ii)}{\geq} (\Delta_t -\Delta_{t+1})\frac{\Delta_{t+1}^{-\theta}}{2} \nonumber\\
    &\overset{(iii)}{\geq} \frac{\gamma (2\mu)^{\theta}}{4}\Delta_t^{\theta}\frac{\Delta_{t+1}^{-\theta}}{2} \nonumber \\
    &\overset{(iv)}{\geq} \frac{\gamma (2\mu)^{\theta}}{4}\Delta_{t+1}^{\theta}\frac{\Delta_{t+1}^{-\theta}}{2}=\frac{\gamma (2\mu)^{\theta}}{8},\nonumber
\end{align}
where $(i)$ applies mean value theorem such that $\Phi(\Delta_{t+1})-\Phi(\Delta_t) = |\Delta_{t+1}-\Delta_{t} \| \Phi'(\xi)|$ and $\Delta_{t+1}^{-\theta}\geq|\Phi'(\xi)|\geq \Delta^{-\theta}_{t}$ holds for any $\xi \in [\Delta_{t+1}, \Delta_{t}]$; (ii) uses the fact $\Delta_{t+1}^{-\theta}\leq 2\Delta_{t}^{-\theta}$; (iii) utilizes recursion $\Delta_t - \Delta_{t+1} \geq \frac{\gamma (2\mu)^{\theta}}{4}\Delta_t^{\theta}$; (iv) uses the fact that $\Delta_{t}^{\theta} > \Delta_{t+1}^{\theta}$ for all $\theta >0$. 

When $\Delta_{t+1}^{-\theta}> 2 \Delta_t^{-\theta}$, it holds that $\Delta_{t+1}^{1-\theta}=(\Delta_{t+1}^{-\theta})^{\frac{1-\theta}{-\theta}} > 2^{\frac{1-\theta}{-\theta}}\Delta_t^{1-\theta} $.
Then, we have
\begin{align}
    \Phi(\Delta_{t+1})-\Phi(\Delta_t)=&\frac{1}{\theta-1}(\Delta_{t+1}^{1-\theta}-\Delta_t^{1-\theta}) \nonumber\\
    \overset{(i)}{\geq}& \frac{1}{\theta-1}\big((2)^{\frac{\theta-1}{\theta}}-1\big)\Delta_t^{1-\theta} \nonumber\\
    \overset{(ii)}{\geq}& \frac{1}{\theta-1}\big((2)^{\frac{\theta-1}{\theta}}-1\big)\Delta_0^{1-\theta},\nonumber
\end{align}
where (i) is due to the recursion $\Delta_{t+1}^{1-\theta}=(\Delta_{t+1}^{-\theta})^{\frac{1-\theta}{-\theta}} > 2^{\frac{1-\theta}{-\theta}}\Delta_t^{1-\theta} $; (ii) is due to the fact the sequence $\{\Delta_t\}_{t=1}^{T}$ generated by \eqref{descentPL2} is non-increasing. 
Substitute $\theta=\frac{2-\beta}{\rho}$ in and denote 
\begin{align}
    C=\min\Big\{\frac{\gamma(2\mu)^{\frac{2-\beta}{\rho}}}{8},\frac{\rho}{2-\beta-\rho}(2^{\frac{2-\beta-\rho}{2-\beta}}-1)\Delta_0^{\frac{\beta+\rho-2}{\rho}}  \Big\}. \nonumber
\end{align}
We conclude for all $t$, we have
\begin{align}
    \Phi(\Delta_t)\geq \Phi(\Delta_t)-\Phi(\Delta_0)= \sum_{i=0}^{t-1}\Phi(\Delta_{i+1})-\Phi(\Delta_{i})\geq C t, \nonumber
\end{align}
Substituting the expression $\Phi(\Delta_t)=\frac{1}{\theta-1}\Delta_t^{1-\theta}$ into the above inequality, we have
\begin{align}
    \Delta_{t}\leq \Big(\frac{\rho}{(2-\beta-\rho)\cdot Ct}\Big)^{\frac{\rho}{2-\rho-\beta}},
\end{align}
In order to obtain $\Delta_T\leq \epsilon$,
when $C=\frac{\rho}{2-\beta-\rho}(2^{{(2-\beta-\rho)}/{(2-\beta)}}-1)\Delta_0^{{(\beta+\rho-2)}/{\rho}}$, 
T should satisfy 
\begin{align}
T= \frac{1}{(2^{(2-\beta-\rho)/(2-\beta)}-1)\Delta_0^{{(\rho+\beta-2)}/{\rho}}\epsilon^{(2-\beta-\rho)/{\rho}}};
\end{align}
When $C= \frac{\gamma(2\mu)^{{2-\beta}/{\rho}}}{8}=\frac{(2\mu)^{2/\rho}\cdot \epsilon^{\beta/\rho}}{8(8(L_0+L_1)+1)}$, 
T should satisfy
\begin{align}
T=\frac{8\rho(8(L_0+L_1)+1)}{(2-\beta-\rho)(2\mu)^{2/\rho}\epsilon^{(2-\rho)/{\rho}}}.
\end{align}

This concludes when $\rho+\beta<2$, T should satisfy 
\begin{align}
    T\geq \max \Big\{\frac{8\rho(8(L_0+L_1)+1)}{(2-\beta-\rho)(2\mu)^{2/\rho}\epsilon^{(2-\rho)/{\rho}}}, \frac{1}{(2^{(2-\beta-\rho)/(2-\beta)}-1)\Delta_0^{{(\rho+\beta-2)}/{\rho}}\epsilon^{(2-\beta-\rho)/{\rho}}} \Big\} = \Omega\big((\frac{1}{\epsilon})^{\frac{2-\rho}{\rho}}\big).
\end{align}

\noindent\textbf{Case II: $\beta=2-\rho$}

This is equivalent to $\frac{2-\beta}{\rho}=1$, \eqref{descentPL2} reduces to 
\begin{align}
    \Delta_{t+1}\leq \Delta_t - \frac{\gamma \mu}{2} \Delta_t = (1-\frac{\gamma\mu}{2})\Delta_t,\nonumber
\end{align}
which leads to
\begin{align}
    \Delta_t \leq (1-\frac{\gamma\mu}{2})^t \Delta_0 = \mathcal{O}\Big((1-\frac{\gamma \mu}{2})^t\Big). \nonumber
\end{align}
To obtain $\Delta_T\leq \epsilon$, we have
\begin{align}
    \Delta_t \leq (1-\frac{\gamma \mu}{2})^t \Delta_0 \leq \exp(-\frac{\gamma\mu t}{2})\Delta_0.
\end{align}
To obtain $\Delta_t \leq \epsilon$, iteration complexity should satisfy
\begin{align}
T\geq \frac{2}{\gamma \mu}\log(\frac{\Delta_0}{\epsilon})= \frac{2^{1-\beta/\rho}\cdot(8(L_0+L_1)+1)}{\mu\cdot(\mu\epsilon)^{\beta/\rho}}\cdot\log(\frac{\Delta_0}{\epsilon})=\Omega\big((\frac{1}{\epsilon})^{\frac{\beta}{\rho}}\cdot \log(\frac{\Delta_0}{\epsilon})\big)\nonumber.
\end{align}

\noindent\textbf{Case III: $1\geq\beta>2-\rho$} 

This is equivalent to $\frac{2-\beta}{\rho}<1$.

For simplicity of notation,  denote $\omega = \frac{\rho}{2-\beta}$ and $C=\frac{ (2\mu)^{\frac{2-\beta}{\rho}}}{4}$. Then,
\eqref{descentPL2} can be rewritten as
\begin{align}
    \Delta_{t+1}\leq \Delta_t - C\gamma\Delta_t^{\frac{1}{\omega}}.
\end{align}
When $\Delta_t$ is small enough,  $\Delta_{t+1}^{{1}/{\omega}}$ will dominate $\Delta_{t+1}$ order-wisely since $1/\omega<1$. This fact leads to
    \begin{align}
        C\gamma \Delta_{t+1}^{\frac{1}{\omega}}\overset{(i)}{\leq} \Delta_{t+1}+ C\gamma \Delta_{t+1}^{\frac{1}{\omega}}\overset{(ii)}{\leq} \Delta_{t+1}+ C\gamma \Delta_{t}^{\frac{1}{\omega}}\overset{(iii)}{\leq} \Delta_t, \nonumber
    \end{align}
    where (i) is because $\Delta_{t+1}\geq 0$; (ii) is because $\Delta_{t+1}\leq \Delta_t$; (iii) is because re-organization of \eqref{descentPL2}. 
    There exists a time $T_0= \inf \{ t \in \mathbf{N}| \Delta_{t}/{(C\gamma)}^{\omega/(\omega-1)}< 1\}$ such that for any $T_0\leq t\leq T$, we have
    \begin{align}
        \Delta_{t+1}\leq (C\gamma)^{-\omega}\Delta_t^{\omega}=&(C\gamma)^{-\omega-\omega^2-...-\omega^{t-T_0}}\Delta_{T_0}^{\omega^{t-T_0}}\nonumber\\
        =& (C\gamma)^{\frac{\omega(1-\omega^{t-T_0})}{\omega-1}} \Delta_{T_0}^{\omega^{t-T_0}}\nonumber\\
        =& (C\gamma)^{{\omega}/{\omega-1}}\big((C\gamma)^{{\omega}/{\omega-1}}\big)^{\omega^{-{t-T_0}}}\Delta_{T_0}^{\omega^{t-T_0}}.
    \end{align}
    To simplify analysis, denote all parameters associated with $\epsilon$ as $\hat{C}=(\frac{C(2\mu)^{{\beta}/{\rho}}}{8(L_0+L_1)+1})^{{\omega}/{\omega-1}}$. We have $(C\gamma)^{{\omega}/{\omega-1}} = \hat{C} \epsilon^{\beta/{(\rho+\beta-2)}}\leq \hat{C}$. 
    This enables us to further reduce the recursion to
    \begin{align}
        \Delta_{t+1}\leq&(C\gamma)^{{\omega}/{\omega-1}}\big((C\gamma)^{{\omega}/{\omega-1}}\big)^{\omega^{-{t-T_0}}}\Delta_{T_0}^{\omega^{t-T_0}} \nonumber\\
        \leq& \hat{C} \Big((C\gamma)^{\frac{\omega}{\omega-1}}\Big)^{{-\omega}^{t-T_0}}\Delta_{T_0}^{{\omega}^{t-T_0}} \nonumber\\
        =& \mathcal{O}\bigg(\Big(\frac{\Delta_{T_0}}{\gamma^{\omega/{\omega-1}}}\Big)^{\omega^{t-T_0}}\bigg).
    \label{descentPL3}
    \end{align} 
    To obtain $\Delta_T\leq \epsilon$,
    above recursion should satisfy
    \begin{align}
        \hat{C} \Big((C\gamma)^{\frac{\omega}{\omega-1}}\Big)^{{-\omega}^{t-T_0}}\Delta_{T_0}^{{\omega}^{t-T_0}}=\epsilon. \nonumber
    \end{align}
    Taking logarithm on both sides and extracting $\epsilon^{\beta/(\rho+\beta-2)}$ from ${(C\gamma)^{{\omega}/{\omega-1}}}/{\Delta_{T_0}}$, we have
    \begin{align}
        \log({\hat{C}}/{\epsilon})= \omega^{t-T_0} \cdot \log\Big({(C\gamma)^{\frac{\omega}{\omega-1}}}/{\Delta_{T_0}}\Big)=\omega^{t-T_0}\cdot\log\Big(\big(\hat{C}/{\Delta_{T_0}}\big)\cdot \epsilon^{\frac{\beta}{\rho+\beta-2}}\Big) \leq \omega^{t-T_0}\big(\hat{C}/{\Delta_{T_0}}\big)\cdot \epsilon^{\frac{\beta}{\rho+\beta-2}}\nonumber.
    \end{align}
    
    Taking logarithm again on both sides of the above inequality, we have
    \begin{align}
        t-T_0
        \geq& \frac{1}{\log\big(\frac{\rho}{2-\beta})}\cdot\Big[\log\Big(\frac{\beta}{\rho+\beta-2}\cdot\log\Big(\frac{1}{\epsilon}\Big)+\log\Big(\log\Big(\frac{(2\mu)^{2/(\rho+\beta-2)}}{(32(L_0+L_1)+4)^{{\rho}/{(\rho+\beta-2)}}\epsilon}\Big)\Big)\nonumber\\
        &\qquad\qquad\qquad\qquad\qquad\qquad \qquad\qquad+\log\Big(\frac{(32(L_0+L_1)+4)^{{\rho}/{(\rho+\beta-2)}}\Delta_{T_0}}{(2\mu)^{2/(\rho+\beta-2)}}\Big)\Big]\nonumber\\
        \gtrsim  &\Omega\Big(\log\Big(\frac{\beta}{\rho+\beta-2}\cdot\log\Big(\frac{1}{\epsilon}\Big)+\log\Big(\log\Big(\frac{(2\mu)^{2/(\rho+\beta-2)}}{(32(L_0+L_1)+4)^{\rho/(\rho+\beta-2)}\epsilon}\Big)\Big)\Big) \nonumber\\
        =&\Omega \big(\log\big((\frac{1}{\epsilon})^{\frac{\beta}{\rho+\beta-2}}\big)\big). \nonumber
    \end{align}

\end{proof}
\section{Proof of Proposition \ref{GD convergence}}\label{Appendix F}
\gdconvergence*
\begin{proof}\label{Proof of GD convergence}
    When $\alpha=1$, putting the update rule of gradient descent
    $w_{t+1}=w_t - \gamma \nabla f(w_t)$ into descent lemma \eqref{descent lemma} yields
    \begin{align}
        &f(w_{t+1})\nonumber\\
        \overset{(i)}{\leq}& f(w_t)-\gamma \big \|\nabla f(w_t) \big \|^2+\frac{\gamma^2}{2}(L_0+L_1\big\| \nabla f(w_t)\big\|)
        \big \|\nabla f(w_t) \big \|^2\nonumber \\
        =&f(w_t)-(\gamma-\frac{L_0\gamma^2}{2})\big\|\nabla f(w_t) \big\|^2+\frac{L_1\gamma^2}{2}\big \|\nabla f(w_t) \big \|^3\nonumber\\
        \overset{(ii)}{\leq}& f(w_t)-\frac{\gamma}{2}\big\|\nabla f(w_t) \big\|^2+\frac{L_1\gamma^2}{2}\big\| \nabla f(w_t)\big\|^3\nonumber\\
        \overset{(iii)}{\leq}& f(w_t) - \frac{\gamma}{4}\big\|\nabla f(w_t) \big\|^2
    \end{align}
    where (i) is due to descent lemma \eqref{descent lemma}; (ii) and (iii) are due to the learning rate design $\gamma \leq \min \{\frac{1}{L_0},\frac{1}{2L_1G}  \}$, which ensures $-(\gamma-\frac{L_0\gamma^2}{2})\leq -\frac{\gamma}{2}$ and $\frac{L_1\gamma^2}{2}\| \nabla f(w_t)\|^3\leq \frac{\gamma}{4}\|\nabla f(w_t) \|^2$.
    
    When applying Assumption \ref{assum2} with $\rho=1$ and denote $f(w_t)-f^*$ as $\Delta_t$, we have
    \begin{align}
        \Delta_{t+1}\leq \Delta_t -\gamma\mu^2\Delta_t^2
        \label{gd recursion}
    \end{align}
    The rest of proof is exactly the same as proof for Theorem \ref{beta GD convergence} Case I, we omit discussion here. As a result, one can show \eqref{gd recursion} converges to a $\epsilon$-stationary point after $\mathcal{O}(\frac{G}{\epsilon})$ iterations.
\end{proof}

\newpage
\section{Proof of Lemma \ref{stochastic gradient bound}}\label{Appendix H}
\mainlemmatwo*
\begin{proof}\label{Proof of Lemma 2}
    Based on Assumption \ref{assum6}, we have
    \begin{align}
        \| \nabla F(w_t)-\nabla f_{\xi'}(w_t)\| \leq \tau_1 \|\nabla F(w_t) \|+\tau_2,\nonumber
    \end{align}
    which, by triangle inequality, further implies that
    \begin{align}
        \big| \| \nabla F(w_t) \| -\|\nabla f_{\xi'}(w_t)\|\big|\leq 
        \| \nabla F(w_t)-\nabla f_{\xi'}(w_t)\|\leq \tau_1 \|\nabla F(w_t) \|+\tau_2.\nonumber
    \end{align}
    Rearranging the above inequality yields that 
    \begin{align}
        \|\nabla F(w_t) \|\leq \frac{1}{1-\tau_1}\|\nabla f_{\xi'}(w_t) \|+\frac{\tau_2}{1-\tau_1}.\nonumber
    \end{align}
    For the upper bound of $\|\nabla f_{\xi'}(w_t) \|$, it directly follows from Assumption \ref{assum6}.
\end{proof}
\section{Lemma \ref{upper bound for term 1} and Proof}
\begin{lemma}\label{upper bound for term 1}
For $\frac{1}{2}\gamma^2\frac{(L_0+L_1\big \|\nabla F(w_t) \big \|^{\alpha})}{h_t^{2\beta}}\cdot{4\tau_2^2}$, we have the following upper bound
\begin{align}
    \frac{1}{2}\gamma^2\frac{(L_0+L_1\big \|\nabla F(w_t) \big \|^{\alpha})}{h_t^{2\beta}}\cdot{4\tau_2^2}\leq \frac{1}{2}\gamma^2(L_0+L_1)(1+4\tau_2^2)^2+ \frac{\gamma}{4h_t^{\beta}}\big \|\nabla F(w_t) \big \|^2.
    \label{stochastic noise bound}
\end{align}
\end{lemma}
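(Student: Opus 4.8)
The plan is to split on the magnitude of $\|\nabla F(w_t)\|$ relative to the threshold $R := \sqrt{1 + 4\tau_2^2/(2\tau_1^2+1)}$, matching the two complementary cases flagged in the proof outline. Before the case analysis I would record two consequences of the clipping structure $h_t = \max\{1, \Gamma(A\|\nabla f_{\xi'}(w_t)\|+\delta)\}$. First, $h_t \geq 1$, so $h_t^{2\beta}\geq h_t^\beta \geq 1$. Second, combining $A = 1/(1-\tau_1)$ and $\delta = \tau_2/(1-\tau_1)$ with Lemma \ref{stochastic gradient bound} gives $A\|\nabla f_{\xi'}(w_t)\| + \delta \geq \|\nabla F(w_t)\|$, hence $h_t \geq \max\{1, \Gamma\|\nabla F(w_t)\|\}$ and, using $\Gamma^\beta = 4L_1\gamma(2\tau_1^2+1)$, $h_t^\beta \geq \max\{1,\, 4L_1\gamma(2\tau_1^2+1)\|\nabla F(w_t)\|^\beta\}$. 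From here I would establish the \emph{coefficient bound} $\gamma(2\tau_1^2+1)(L_0 + L_1\|\nabla F(w_t)\|^\alpha)/h_t^\beta \leq \tfrac12$: the $L_0$ piece is controlled by $h_t^\beta\geq 1$ together with $\gamma \leq 1/(4L_0(2\tau_1^2+1))$, while the $L_1$ piece splits into the subcase $\|\nabla F(w_t)\|\geq 1$ (use $\alpha\leq\beta$ and the second lower bound on $h_t^\beta$) and the subcase $\|\nabla F(w_t)\| < 1$ (use $\|\nabla F(w_t)\|^\alpha \leq 1$, $h_t^\beta\geq 1$, and $\gamma\leq 1/(4L_1(2\tau_1^2+1))$), each contributing at most $\tfrac14$.

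In the small-gradient case $\|\nabla F(w_t)\| \leq R$, I would drop the denominator via $h_t^{2\beta}\geq 1$, then bound $\|\nabla F(w_t)\|^\alpha \leq R^\alpha \leq R \leq \sqrt{1+4\tau_2^2}$ (using $R\geq 1$, $\alpha\leq 1$, and $2\tau_1^2+1\geq 1$) and $L_0 + L_1\sqrt{1+4\tau_2^2} \leq (L_0+L_1)\sqrt{1+4\tau_2^2}$. This reduces the target to the elementary inequality $4\tau_2^2\sqrt{1+4\tau_2^2} \leq (1+4\tau_2^2)^2$, i.e.\ $u \leq (1+u)^{3/2}$ with $u = 4\tau_2^2 \geq 0$, which holds since $(1+u)^{3/2}\geq 1+u \geq u$. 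The left-hand side of \eqref{stochastic noise bound} is then at most its first term $\tfrac12\gamma^2(L_0+L_1)(1+4\tau_2^2)^2$, and the remaining nonnegative term on the right only helps.

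In the large-gradient case $\|\nabla F(w_t)\| > R$, I would instead divide the target by $\gamma/h_t^\beta$ and reduce it to showing $2\tau_2^2\,\gamma(L_0 + L_1\|\nabla F(w_t)\|^\alpha)/h_t^\beta \leq \tfrac14\|\nabla F(w_t)\|^2$. Applying the coefficient bound gives $\gamma(L_0 + L_1\|\nabla F(w_t)\|^\alpha)/h_t^\beta \leq 1/(2(2\tau_1^2+1))$, so the left-hand side is at most $\tau_2^2/(2\tau_1^2+1)$; the case hypothesis $\|\nabla F(w_t)\|^2 > R^2 = 1 + 4\tau_2^2/(2\tau_1^2+1) > 4\tau_2^2/(2\tau_1^2+1)$ yields $\tfrac14\|\nabla F(w_t)\|^2 > \tau_2^2/(2\tau_1^2+1)$, closing the case. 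Again the first term on the right of \eqref{stochastic noise bound} is nonnegative and can be discarded.

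The main obstacle is the coefficient bound, which is where the clipping design of $h_t$ and the precise learning-rate constraints must be deployed correctly; in particular the $L_1$ term forces the split into $\|\nabla F(w_t)\| \geq 1$ and $\|\nabla F(w_t)\| < 1$, relying on $\alpha \leq \beta$ in the former. The remaining work is routine, the only delicate point being the verification that $u \leq (1+u)^{3/2}$ so that the small-gradient bound lands inside $(1+4\tau_2^2)^2$.
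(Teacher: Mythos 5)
Your proof is correct and follows essentially the same route as the paper's: the same case split on $\|\nabla F(w_t)\|$ versus $\sqrt{1+4\tau_2^2/(2\tau_1^2+1)}$, with the small-gradient case absorbed into the term $\frac{1}{2}\gamma^2(L_0+L_1)(1+4\tau_2^2)^2$ (via $h_t\geq 1$ and elementary bounds on powers of $1+4\tau_2^2$) and the large-gradient case absorbed into $\frac{\gamma}{4h_t^{\beta}}\|\nabla F(w_t)\|^2$ using the clipping lower bound $h_t^{\beta}\geq 4L_1\gamma(2\tau_1^2+1)\|\nabla F(w_t)\|^{\beta}$, the constraint $\gamma\leq \frac{1}{4L_0(2\tau_1^2+1)}$, and $\beta\geq\alpha$. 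The only difference is organizational: you funnel both the $L_0$ and $L_1$ pieces through the unified coefficient bound $\gamma(2\tau_1^2+1)(L_0+L_1\|\nabla F(w_t)\|^{\alpha})/h_t^{\beta}\leq \frac{1}{2}$ (which the paper proves anyway inside Theorem~\ref{Clip SGD converge}, in \eqref{step size relation1} and \eqref{step size relation2}) and then invoke the case hypothesis once, whereas the paper bounds the $L_0$ and $L_1$ contributions separately by $\frac{\gamma}{8h_t^{\beta}}\|\nabla F(w_t)\|^2$ each, so the underlying ingredients are identical.
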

\begin{proof}
        When $\big \|\nabla F(w_t) \big \|\leq \sqrt{1+{4\tau_2^2}/{(2\tau_1^2+1)}}$, we have $\big \|\nabla F(w_t) \big \|^{\alpha}\leq (1+{4\tau_2^2}/{(2\tau_1^2+1)})^{\frac{\alpha}{2}}$ for any $\alpha >0$. Then, we have
        \begin{align}
        &\frac{\gamma^2}{2}\frac{(L_0+L_1\big \|\nabla F(w_t) \big \|^{\alpha})}{h_t^{2\beta}}4\tau_2^2 \nonumber\\
        \leq& \frac{1}{2}\gamma^2(1+\frac{4\tau_2^2}{2\tau_1^2+1})^{\frac{\alpha}{2}}\frac{(L_0+L_1)}{h_t^{2\beta}}{4\tau_2^2}\nonumber \\
        \leq& \frac{1}{2}\gamma^2(L_0+L_1)(1+4\tau_2^2)^{\frac{\alpha}{2}}(1+4\tau_2^2) \nonumber\\
        \leq& \frac{1}{2}\gamma^2(L_0+L_1)(1+{4\tau_2^2})^{2},
        \label{stochastic noise bound 1}
    \end{align}
    where the first inequality uses $\|\nabla F(w_t) \|^{\alpha}\leq (1+{4\tau_2^2}/{(2\tau_1^2+1}))^{\alpha/2}$ and $(1+{4\tau_2^2}/{(2\tau_1^2+1)})^{\alpha/2}\geq1$; the second inequality uses the fact that $\frac{1}{h_t}\leq 1$ (so does $\frac{1}{h_t^{2\beta}}$) and upper bound ${4\tau_2^2}$ by 
    $1+4{\tau_2^2}$; the last inequality uses the fact that thus $(1+4{\tau_2^2})^{1+\alpha/2}\leq(1+4{\tau_2^2})^2 $.

    When $\big \|\nabla F(w_t) \big \| > \sqrt{1+{4\tau_2^2}/{(2\tau_1^2+1)}}$, we have
    \begin{align}
        &\frac{\gamma^2}{2} \frac{L_1 \big \|\nabla F(w_t) \big \|^{\alpha}}{h_t^{2\beta}}\cdot {4\tau_2^2} \nonumber\\
        =& \frac{\gamma^2}{2h_t^{\beta}} \frac{L_1 \big \|\nabla F(w_t) \big \|^{\alpha}}{h_t^{\beta}}\cdot {4\tau_2^2} \nonumber \\
        \overset{(i)}{\leq}&\frac{\gamma^2}{2h_t^{\beta}} \frac{L_1 \big \|\nabla F(w_t) \big \|^{\alpha}}{4L_1\gamma(2\tau_1^2+1)(\frac{1}{1-\tau_1}\|\nabla f_{\xi'}(w_t) \|+\frac{\tau_2}{1-\tau_1})^{\beta}}\cdot 4\tau_2^2 \nonumber \\
        =& \frac{\gamma^2}{2h_t^{\beta}} \frac{L_1 \big \|\nabla F(w_t) \big \|^{\alpha}}{4L_1\gamma(\frac{1}{1-\tau_1}\|\nabla f_{\xi'}(w_t) \|+\frac{\tau_2}{1-\tau_1})^{\beta}}\cdot \frac{4\tau_2^2}{(2\tau_1^2+1)} \nonumber \\
        \overset{(ii)}{<}& \frac{\gamma^2}{2h_t^{\beta}} \frac{L_1 \big \|\nabla F(w_t) \big \|^{\alpha}}{(4L_1\gamma)(\frac{1}{1-\tau_1}\big \|\nabla f_{\xi'}(w_t) \big \|+\frac{\tau_2}{1-\tau_1})^{\beta}} \cdot \big \|\nabla F(w_t) \big \|^2 \nonumber \\
        \overset{(iii)}{\leq}& \frac{\gamma^2}{2h_t^{\beta}} \frac{L_1 \big \|\nabla F(w_t) \big \|^{\alpha}}{(4L_1\gamma)\big \|\nabla F(w_t) \big \|^{\beta}} \big \| \nabla F(w_t)\big \|^2\nonumber \\
        =& \frac{\gamma^2}{2h_t^{\beta}}\frac{L_1}{4L_1\gamma \big \|\nabla F(w_t) \big \|^{\beta - \alpha}}\big \|\nabla F(w_t) \big \|^2 \nonumber \\
        \overset{(iv)}{<}& \frac{\gamma^2}{2h_t^{\beta}} \frac{L_1}{4L_1\gamma} \big \|\nabla F(w_t) \big \|^{2}\nonumber \\
        =& \frac{\gamma}{8h_t^{\beta}} \big \|\nabla F(w_t) \big \|^2, 
        \label{stochastic noise bound 2}
    \end{align}
        where (i) uses the fact $\frac{1}{h_t^{\beta}}\leq \frac{1}{4L_1\gamma(2\tau_1^2+1)(\frac{1}{1-\tau_1}\|\nabla f_{\xi'}(w_t) \|+\frac{\tau_2}{1-\tau_1})^{\beta}}$;
        (ii) uses the fact that now $\|\nabla F(w_t) \|^2> 1+{4\tau_2^2}/{(2\tau_1^2+1)}\geq {4\tau_2^2}/{(2\tau_1^2+1)}$;
     (iii) uses the fact $\|\nabla F(w_t) \|\leq \frac{1}{1-\tau_1}\|\nabla f_{\xi'}(w_t) \|+\frac{\tau_2}{1-\tau_1}$; (iv) uses the fact $\big \|\nabla F(w_t) \big \| > \sqrt{1+{4\tau_2^2}/{(2\tau_1^2+1)}}\geq 1$ and thus $\frac{1}{\|\nabla F(w_t)  \|^{\beta-\alpha}}<1$.
    
    Similarly, when $\big \|\nabla F(w_t) \big \| > \sqrt{1+{4\tau_2^2}/{(2\tau_1^2+1})}$,  for $\frac{1}{2h_t^{2\beta}}\gamma^2 L_0 \cdot 4\tau_2^2$, we have
    \begin{align}
       &\frac{1}{2h_t^{2\beta}}\gamma^2 L_0 \cdot 4\tau_2^2 \nonumber\\
       =&\frac{\gamma}{2h_t^{2\beta}}(\gamma L_0) \cdot 4\tau_2^2 \nonumber\\
       \leq& \frac{\gamma}{2h_t^{\beta}}(\gamma L_0) \cdot 4\tau_2^2 \nonumber\\
       \leq& \frac{\gamma}{2h_t^{\beta}} \frac{1}{4\cdot(2\tau_1^2+1)}4\tau_2^2 \nonumber \\
       =& \frac{\gamma}{8h_t^{\beta}} \frac{4\tau_2^2}{2\tau_1^2+1} \nonumber \\
       \leq& \frac{\gamma}{8h_t^{\beta}} \Big(1+\frac{4\tau_2^2}{2\tau_1^2+1} \Big)\nonumber \\
       <& \frac{\gamma}{8h_t^{\beta}} \big \|\nabla F(w_t) \big \|^2,
       \label{stochastic noise bound 3}
    \end{align}
        where the first inequality uses the fact that $\frac{1}{h_t^{\beta}}\leq 1$; the second inequality uses the fact $\gamma \leq \frac{1}{4L_0(2\tau_1^2+1)}$; the third inequality uses the fact $\frac{4\tau_2^2}{2\tau_1^2+1}\leq 1+\frac{4\tau_2^2}{2\tau_1^2+1}$; the fourth inequality uses the fact that
        $\big \|\nabla F(w_t) \big \|^2 > 1+{4\tau_2^2}/{(2\tau_1^2}+1)$.
        Combining \eqref{stochastic noise bound 1}, \eqref{stochastic noise bound 2}, \eqref{stochastic noise bound 3} gives us the desired result.
\end{proof}

\section{Proof of Theorem \ref{Clip SGD converge}}
\label{Appendix G}
\mainthmtwo*
\begin{proof}\label{Proof of IAN-SGD}
By the descent lemma in \eqref{origindescent} and the update rule of IAN-SGD in \eqref{update rule}, we have
\begin{align}
    &F\big(w_{t+1}\big)-F\big(w_t\big)\nonumber\\
    & {\leq} \nabla F\big(w_t\big)^{\top}\big(w_{t+1}-w_t\big) +\frac{1}{2}\big(L_0+L_1\big\|\nabla F\big(w_t\big)\big\|^\alpha\big)\big\|w_{t+1}-w_t\big\|^2 \nonumber\\
    & = -\gamma  \frac{\nabla F\big(w_t\big)^{\top}\nabla f_{\xi}(w_t)}{h_t^{\beta} }  + \frac{1}{2}\gamma^2\big(L_0+L_1\big\|\nabla F\big(w_t\big) \big \|^{\alpha} \big)\frac{\big \|\nabla f_{\xi}(w_t) \big \|^2}{h_t^{2\beta}}.
    \label{stochastic descent}
\end{align}
Taking expectation over $\xi$,$\xi'$ and $w_t$, and by the independence between $\xi$ and $\xi'$, we have
\begin{align}
    &\mathbb{E}_{w_t}\Big[\big[\mathbb{E}_{\xi,\xi'}\big[F(w_{t+1})-F(w_t)|w_t \big] \Big]\nonumber\\
    \leq& \mathbb{E}_{w_t,\xi'}\Big[ \frac{ -\gamma \big \| \nabla F(w_t)\big \|^2 }{h_t^{\beta}}  + \frac{1}{2}\gamma^2(L_0 + L_1\big \|\nabla F(w_t) \big \|^\alpha) \frac{\mathbb{E}_{\xi}\big [\big \|\nabla f_{\xi}(w_t)\big \|^2|w_t \big]}{h_t^{2\beta}} \Big].\nonumber
\end{align}

From Assumption \ref{assum6}, we have
\begin{align}
    \mathbb{E}_{\xi}\big [\big \|\nabla f_{\xi}(w_t)\big \|^2 |w_t \big]=&\mathbb{E}_{\xi}\big [\big \|\nabla f_{\xi}(w_t)-\nabla F(w_t)+\nabla F(w_t)\big \|^2 |w_t \big]\nonumber\\
    \overset{(i)}{\leq} & 2\mathbb{E}_{\xi}\big [\big \|\nabla f_{\xi}(w_t)-\nabla F(w_t)\big \|^2 |w_t \big]+2\big \| \nabla F(w_t) \big \|^2 \nonumber\\
    \overset{(ii)}{\leq} & 2(\tau_1\big \|\nabla F(w_t) \big \|+\tau_2)^2+2\big \| \nabla F(w_t)\big\|^2\nonumber \\
    \overset{(iii)}{\leq}& ({4\tau_1^2}+2) \big \|\nabla F(w_t) \big \|^2 + {4\tau_2^2},
    \label{second momentum}
\end{align}
where inequalities (i) and (iii) are direct applications of $(a+b)^2\leq 2a^2+2b^2$, and (ii) follows from Assumption \ref{assum6}.
Substituting \eqref{second momentum} into the above descent lemma, we have
\begin{align}
    &\mathbb{E}_{w_t, \xi'}\big[\mathbb{E}_{\xi}\big[F(w_{t+1})-F(w_t)|w_t \big]\big] \nonumber \\
    \leq& \mathbb{E}_{w_t, \xi'}\Big[ -\gamma \frac{
    \big \| \nabla F(w_t)\big \|^2}{h_t^{\beta}}+ \frac{1}{2}\gamma^2(L_0 + L_1\big \|\nabla F(w_t) \big \|^\alpha) \frac{\mathbb{E}_{\xi}\big [\big \|\nabla f_{\xi}(w_t)\big \|^2|w_t \big]}{h_t^{2\beta}} \Big]\nonumber\\
    \leq& \mathbb{E}_{w_t, \xi'}\Big[ -\gamma \frac{\big \| \nabla F(w_t)\big \|^2}{h_t^{\beta}} + \frac{1}{2}\gamma^2(L_0 + L_1\big \|\nabla F(w_t) \big \|^\alpha) \frac{(4\tau_1^2+2)\big \|\nabla F(w_t) \big \|^2 +4\tau_2^2 }{h_t^{2\beta}} \Big]\nonumber\\
    =& \mathbb{E}_{w_t, \xi'}\Big[ \Big(\frac{\gamma}{h_t^{\beta}}\big(-1+\gamma(2\tau_1^2+1)\cdot \frac{L_0+L_1 \big \|\nabla F(w_t) \big \|^{\alpha}}{h_t^{\beta}}\big)\Big) \|\nabla F(w_t) \big \|^2 + \frac{1}{2}\gamma^2\frac{L_0+L_1 \big \|\nabla F(w_t) \big \|^{\alpha}}{h_t^{2\beta}}{4\tau_2^2}\Big]. \label{eq: des_lem}
\end{align}
Next, we provide an upper bound of the term $\gamma(2\tau_1^2+1)\cdot \frac{L_0+L_1 \|\nabla F(w_t) \|^{\alpha}}{h_t^{\beta}}$ in the above equation.
By the clipping structure and step size rule, we have
$\frac{1}{h_t^{\beta}} = \min \big\{1, \frac{1}{4L_1\gamma(2\tau_1^2+1)(\frac{1}{1-\tau_1}\|\nabla f_{\xi'}(w_t)\|+\frac{\tau_2}{1-\tau_1})^{\beta}} \big \}\leq 1$ and $\gamma \leq \frac{1}{4L_0(2\tau_1^2+1)}$, which imply that
\begin{align}
    (2\tau_1^2+1)\cdot\frac{\gamma L_0}{h_t^{\beta}}<(2\tau_1^2+1)\cdot\gamma L_0\leq& \frac{1}{4}, \label{step size relation1}
\end{align}
On the other hand, we have
\begin{align}
\allowdisplaybreaks
    \frac{1}{4} h_t^{\beta}\overset{(i)}{\geq}& \frac{1}{4} h_t^{\alpha} \nonumber\\
    \overset{}{=}& \frac{1}{4} (h_t^{\beta})^{\frac{\alpha}{\beta}} \nonumber \\
    \overset{(ii)}{\geq}&\frac{1}{4} (4\gamma L_1(2\tau_1^2+1))^{\frac{\alpha}{\beta}}\big(\frac{1}{1-\tau_1}\big \|\nabla f_{\xi'}(w_t) \big \| + \frac{\tau_2}{1-\tau_1} \big)^{\alpha} \nonumber \\
    \overset{(iii)}{\geq}&\frac{1}{4} (4\gamma L_1(2\tau_1^2+1))\big(\frac{1}{1-\tau_1}\big \|\nabla f_{\xi'}(w_t) \big \| + \frac{\tau_2}{1-\tau_1} \big)^{\alpha} \nonumber \\
    \overset{(iv)}{\geq}& \gamma L_1(2\tau_1^2+1) \big \|\nabla F(w_t) \big \|^{\alpha}, \label{step size relation2}
\end{align}
where (i) utilizes the fact $h_t\geq 1$ and $\beta \geq \alpha$; (ii) utilizes the fact that $h_t^{\beta} \geq 4L_1 \gamma(2\tau_1^2+1) (\frac{1}{1-\tau_1} \big \|\nabla f_{\xi'}(w_t) \big \|+\frac{\tau_2}{1-\tau_1})^{\beta}$; (iii) utilizes the fact that $\gamma \leq \frac{1}{4L_1(2\tau_1^2+1)}$, thus $(4\gamma L_1(2\tau_1^2+1))\leq (4\gamma L_1(2\tau_1^2+1))^{\alpha/\beta}$ since $\beta \in [\alpha,1]$; (iv) follows from Lemma \ref{stochastic gradient bound}.

Combining \eqref{step size relation1} and \eqref{step size relation2}, we conclude that $\gamma(2\tau_1^2+1)\cdot \frac{L_0+L_1 \|\nabla F(w_t) \|^{\alpha}}{h_t^{\beta}}\le \frac{1}{2}$, and \eqref{eq: des_lem} further reduces to 
\begin{align}
    &\mathbb{E}_{w_t}\Big[\mathbb{E}_{\xi, \xi'}\big[F(w_{t+1})-F(w_t)|w_t \big]\Big] \nonumber\\
    & \overset{}{\leq} \mathbb{E}_{w_t, \xi'}\Big[-\frac{\gamma}{2h_t^{\beta}}\big \|\nabla F(w_t) \big \|^2 + \frac{1}{2}\gamma^2 \frac{L_0+L_1 \big \|\nabla F(w_t) \big \|^{\alpha}}{h_t^{2\beta}}{4\tau_2^2}\Big] \nonumber\\
    & \overset{(i)}{\leq} \mathbb{E}_{w_t, \xi'}\Big[ -\frac{\gamma}{2h_t^{\beta}}\big \|\nabla F(w_t) \big \|^2 + \frac{1}{2}\gamma^2 (L_0+L_1) (1+4\tau_2^2)^2+\frac{\gamma}{4h_t^{\beta}}\big \|\nabla F(w_t) \big \|^2 \Big] \nonumber\\
    & = \mathbb{E}_{w_t, \xi'}\Big[ -\frac{\gamma}{4h_t^{\beta}}\big \|\nabla F(w_t) \big \|^2 + \frac{1}{2}\gamma^2 (L_0+L_1) (1+4\tau_2^2)^2 \Big],
    \label{refined descent lemma}
\end{align}
where (i) follows from \eqref{stochastic noise bound} in the auxiliary Lemma \ref{upper bound for term 1}.
Rearranging the above inequality yields that
\begin{align}
   &\mathbb{E}_{w_t, \xi'}\Big[ \frac{\gamma}{4h_t^{\beta}} \big \|\nabla F(w_t) \big \|^2 \Big] \leq \mathbb{E}_{w_t}\Big[\mathbb{E}_{\xi, \xi'
    }\big[F(w_t) - F(w_{t+1})|w_t \big]\Big] + \frac{1}{2}(L_0+L_1 )\gamma^2(1+4\tau_2^2)^2. \quad \forall t \in [T]
    \label{for telescope sum}
\end{align}
Next, we further lower bound $ \frac{ \| \nabla F(w_t) \|^2}{h_t^{\beta}} $ by $\| \nabla F(w_t)\|$ to eliminate the randomness in $h_t$. We have
\begin{align}
    &\gamma\frac{ \big \| \nabla F(w_t) \big \|^2}{h_t^{\beta}} \nonumber\\
    \overset{(i)}{=}&\gamma \min \Big\{1,\frac{1}{4L_1(2\tau_1^2+1)\gamma(\frac{1}{1-\tau_1}\big \|\nabla f_{\xi'}(w_t) \big \|+\frac{\tau_2}{1-\tau_1})^\beta}  \Big \} \big \|\nabla F(w_t) \big \|^2 \nonumber \\
    \overset{(ii)}{\geq}& \gamma \min \Big\{1,\frac{1}{4L_1\gamma(2\tau_1^2+1)\big(\underbrace{\frac{\tau_1+1}{1-\tau_1}}_{C_1}\big \|\nabla F(w_t) \big \|+\underbrace{\frac{2\tau_2}{1-\tau_1}}_{C_2} \big)^{\beta}}  \Big \} \big \|\nabla F(w_t) \big \|^2 \nonumber \\
    \overset{(iii)}{\geq}& \gamma \min \Big\{1,\frac{1}{4L_1\gamma(2\tau_1^2+1) \big(2C_1\big \|\nabla F(w_t) \big \|\big)^{\beta}}, \frac{1}{4L_1\gamma(2\tau_1^2+1)\big( 2C_2\big)^{\beta}}  \Big \} \big \|\nabla F(w_t) \big \|^2 \nonumber \\
    \overset{(iv)}{=}&\min \Big \{\gamma, \frac{1}{4L_1(2\tau_1^2+1)\big(2C_1\big \|\nabla F(w_t) \big \|\big)^{\beta}}, \frac{1}{4L_1(2\tau_1^2+1)(2C_2)^{\beta}} \Big\} \big \|\nabla F(w_t) \big \|^2 \nonumber \\
    \overset{(v)}{=}& \min \Big \{\gamma, \frac{1}{4L_1(2\tau_1^2+1)\big(2C_1\big \|\nabla F(w_t) \big \|\big)^{\beta}} \Big\} \big \|\nabla F(w_t) \big \|^2 \nonumber \\
    \overset{}{=}& \min \Big \{ \gamma \big \|\nabla F(w_t) \big \|^2, \frac{\big \|\nabla F(w_t) \big \|^{2-\beta}}{4(2C_1)^{\beta} L_1(2\tau_1^2+1)}\Big \},
    \label{LHS}
\end{align}
where (i) expands the expression of $\frac{1}{h_t^{\beta}}$; (ii) utilizes the second inequality of Lemma \ref{stochastic gradient bound} to upper bound $\|\nabla f_{\xi'}(w_t) \|$ by $(\tau_1+1) \|\nabla F(w_t) \|+\tau_2$; (iii) utilizes the fact $ \frac{1}{(a+b)^{\beta}}\geq \min \{\frac{1}{(2a)^{\beta}}, \frac{1}{(2b)^{\beta}} \}$ by setting $a=\frac{\tau_1+1}{1-\tau_1}\big \|\nabla F(w_t) \big \|$, $b = \frac{2\tau_2}{1-\tau_1}$, $\beta\geq0$; (iv) substitute $\gamma$ inside the minimum operator. 
Moreover, (v) follows from the step size rule $\gamma= \frac{1}{8L_1(2\tau_1^2+1)(2\tau_2/(1-\tau_1))^{\beta}}\leq \frac{1}{4L_1(2\tau_1^2+1)(4\tau_2/(1-\tau_1))^{\beta}}$. 
Substituting \eqref{LHS} into \eqref{for telescope sum}
and summing over $t$ from $0$ to $T-1$, we obtain that
\begin{align}
        &\sum_{t=0}^{T-1} \mathbb{E}_{w_t}\Big[\min \Big \{ \frac{\gamma}{4} \big \|\nabla F(w_t) \big \|^2, \frac{\big \|\nabla F(w_t) \big \|^{2-\beta}}{16L_1(2C_1)^{\beta}(2\tau_1^2+1)}\Big \} \Big]\nonumber \\
        \leq& \sum_{t=0}^{T-1}\mathbb{E}_{w_t,\xi'}\Big[ \frac{\gamma}{4h_t^{\beta}} \big \|\nabla F(w_t) \big \|^2 \Big] \nonumber \\
        \leq& \sum_{t=0}^{T-1}\mathbb{E}_{w_t}\Big[\mathbb{E}_{\xi,\xi'}\big[F(w_t) - F(w_{t+1})|w_t \big]\Big]\nonumber+T\frac{1}{2}\gamma^2(L_0+L_1)(1+4\tau_2^2)^2.
\end{align}
By step size rule $\gamma \leq \frac{1}{\sqrt{T}}$, the above inequality becomes
\begin{align}
    &\sum_{t=0}^{T-1}\mathbb{E}_{w_t}\Big[\min \Big \{ \frac{\gamma}{4} \big \|\nabla F(w_t) \big \|^2, \frac{\big \|\nabla F(w_t) \big \|^{2-\beta}}{16L_1(2C_1)^{\beta}(2\tau_1^2+1)}\Big \}\Big]\nonumber  \leq  F(w_0)-F^*+\frac{1}{2}(L_0+L_1)(1+4{\tau_2^2})^2.
\end{align}
Denote $K=\{t |t\in [T] \text{ such that } \gamma \big \|\nabla F(w_t) \big \|^2 \leq \frac{\|\nabla F(w_t) \|^{2-\beta}}{4L_1(2C_1)^{\beta}(2\tau_1^2+1)} \}$, then the above inequality implies that
\begin{align}
    &\sum_{t\in K} \mathbb{E}_{w_t}\Big[\frac{\gamma}{4}\big \|\nabla F(w_t) \big \|^2 \Big]  \leq F(w_0) - F^* + \frac{1}{2}(L_0+L_1)(1+4\tau_2^2)^2,\label{K's summation}
\end{align}
and
\begin{align}
    &\sum_{t\in K^c}\mathbb{E}_{w_t}\Big[ \frac{\big \|\nabla F(w_t) \big \|^{2-\beta}}{16L_1(2C_1)^{\beta}(2\tau_1^2+1)}\Big] \leq F(w_0) - F^* + \frac{1}{2}(L_0+L_1)(1+4\tau_2^2)^2. \label{Kc's summation}
\end{align}
Now denote RHS as $\Lambda := F(w_0)-F^* + \frac{1}{2}(L_0+L_1)(1+4\tau_2^2)^2$, then we have
\begin{align}
    &\mathbb{E}_{w_t}\Big[\min_{t\in T} \big \|\nabla F(w_t) \big \| \Big]\nonumber\\
    \leq& \mathbb{E}_{w_t}\Big[\min \Big \{\frac{1}{|K|} \sum_{t\in K} \big \|\nabla F(w_t) \big \|, \frac{1}{|K^c|}\sum_{t \in K^c}\big \| \nabla F(w_t) \big \| \Big \}\Big] \nonumber\\
    \overset{(i)}{\leq}& \mathbb{E}_{w_t}\Big[ \min \Big \{\sqrt{\frac{1}{|K|} \sum_{t\in K}^{} \big \|\nabla F(w_t) \big \|^2 }, \Big(\frac{1}{|K^c|}\sum_{t\in K^c}^{}\big \| \nabla F(w_t) \big \|^{2-\beta} \Big)^{\frac{1}{2-\beta}}\Big\}\Big] \nonumber\\
    \overset{(ii)}{\leq}&\max\Big\{\mathbb{E}_{w_t}\sqrt{\frac{1}{|K|} \sum_{t\in K}^{} \big \|\nabla F(w_t) \big \|^2 }, \mathbb{E}_{w_t}\Big(\frac{1}{|K^c|}\sum_{t\in K^c}^{}\big \| \nabla F(w_t) \big \|^{2-\beta} \Big)^{\frac{1}{2-\beta}} \Big\} \nonumber\\
    \overset{(iii)}{\leq}&\max\Big\{\sqrt{\frac{1}{|K|} \sum_{t\in K}^{} \mathbb{E}_{w_t}\big \|\nabla F(w_t) \big \|^2 }, \Big(\frac{1}{|K^c|}\sum_{t\in K^c}^{} \mathbb{E}_{w_t}\big \| \nabla F(w_t) \big \|^{2-\beta} \Big)^{\frac{1}{2-\beta}} \Big\} \nonumber\\
    \overset{(iv)}{\leq}& \max \Big \{\sqrt{(4\Lambda) \frac{4(L_0+L_1)(2\tau_1^2+1)+\sqrt{T}+8L_1(2\tau_1^2+1)((2\tau_2)/(1-\tau_1))^{\beta}}{T}},\nonumber\\
    &\qquad \qquad \qquad \qquad \qquad \qquad \quad \big(\Lambda \frac{32L_1(2\tau_1^2+1)((2\tau_1+2)/1-\tau_1)^{\beta}}{T}\big)^{\frac{1}{2-\beta}} \Big \}, \nonumber
\end{align}
where (i) uses the concavity $\psi(\cdot)=y^{\frac{1}{2}}$ and $\psi(\cdot)=y^{\frac{1}{2-\beta}}$ and Jensen's inequality $\psi(\frac{\sum_i x_i}{n})\geq \frac{\sum_{i}\psi(x_i)}{n}$
applied on $\| \nabla F(w_t)\|$; (ii) uses the fact $\mathbb{E}[\min\{A,B\}]\leq \mathbb{E}[A]\leq \max\{\mathbb{E}[A], \mathbb{E}[B]\}$; (iii) applies Jensen's inequality over concave function $\Psi(\cdot)=y^{\frac{1}{2}}$ and $\Psi(\cdot)=y^{\frac{1}{2-\beta}}$;
(iv) is due to re-organize \eqref{K's summation}, \eqref{Kc's summation} and set $K={T}$, $K^c=\frac{T}{2}$ (Since we require either $K$ or $K^c$ should be larger than $\frac{T}{2}$ to guarantee that $K+K_c=T$).

Thus, in order to find a point satisfies
\begin{align}
    Pr(\min_{t\in[T]}\| \nabla F(w_t) \| \geq \epsilon)\leq \frac{1}{2}, \nonumber 
\end{align}
We must have $\mathbb{E}_{w_t}[\min_{t\in [T]} \| \nabla F(w_t) \| ]\leq \frac{\epsilon}{2}$.

This indicates that the RHS of the above inequality should be smaller than $\frac{\epsilon}{2}$, this implies $T$ satisfies
\begin{align}
    T \geq \Lambda \max \Big\{\frac{256\Lambda}{\epsilon^4}, \frac{64L_1(2\tau_1^2+1)(2+2\tau_1)^{\beta}}{(1-\tau_1)^{\beta}\epsilon^{2-\beta}}, (2\tau_1^2+1)\cdot\frac{64(L_0+L_1)+128L_1((2\tau_2)/(1-\tau_1))^{\beta}}{\epsilon^2} \Big\}.
\end{align}
\end{proof}

\section{IAN-SGD under generalized P{\L} condition}\label{Appendix J}
\mainlemmathree*
\begin{proof}
    When chosen target error $0<\epsilon \leq \min\{1,1/2\mu\}$ and
    learning rate satisfy
    \begin{align}
        \gamma \leq (2\mu\epsilon)^{\frac{4-2\beta}{\rho}}\cdot \frac{1}{4L_0(2\tau_1^2+1)}\leq& \frac{1}{4L_0(2\tau_1^2+1)} \nonumber\\
        \gamma \leq(2\mu\epsilon)^{\frac{4-2\beta}{\rho}}\cdot\frac{1}{4L_1((2\tau_1+2)/(1-\tau_1))(2\tau_1^2+1)}\leq& \frac{1}{4L_1(2\tau_1^2+1)} \nonumber\\
        \gamma \leq (2\mu\epsilon)^{\frac{4-2\beta}{\rho}}\cdot \frac{1}{8L_1(2\tau_1^2+1)(2\tau_2/(1-\tau_1))^{\beta}}\leq& \frac{1}{8L_1(2\tau_1^2+1)(2\tau_2/(1-\tau_1))^{\beta}},
    \end{align}
    the derivations from \eqref{stochastic descent} to \eqref{for telescope sum} stated in Proof \ref{Appendix H} are still valid, we omit discussions of detailed derivation here. 

    Next, we lower bound $ \frac{ \| \nabla F(w_t) \|^2}{h_t^{\beta}} $ by $\| \nabla F(w_t)\|$ to eliminate the randomness in $h_t$. For any $\beta>\alpha>0$, we have
\begin{align}
    &\gamma\frac{ \big \| \nabla F(w_t) \big \|^2}{h_t^{\beta}} \nonumber\\
    \overset{(i)}{=}&\gamma \min \Big\{1,\frac{1}{4L_1(2\tau_1^2+1)\gamma(\frac{1}{1-\tau_1}\big \|\nabla f_{\xi'}(w_t) \big \|+\frac{\tau_2}{1-\tau_1})^\beta}  \Big \} \big \|\nabla F(w_t) \big \|^2 \nonumber \\
    \overset{(ii)}{\geq}& \gamma \min \Big\{1,\frac{1}{4L_1\gamma(2\tau_1^2+1)\big(\underbrace{\frac{\tau_1+1}{1-\tau_1}}_{C_1}\big \|\nabla F(w_t) \big \|+\underbrace{\frac{2\tau_2}{1-\tau_1}}_{C_2} \big)^{\beta}}  \Big \} \big \|\nabla F(w_t) \big \|^2 \nonumber \\
    \overset{(iii)}{\geq}& \gamma \min \Big\{1,\frac{1}{4L_1\gamma(2\tau_1^2+1) \big(2C_1\big \|\nabla F(w_t) \big \|\big)^{\beta}}, \frac{1}{4L_1\gamma(2\tau_1^2+1)\big( 2C_2\big)^{\beta}}  \Big \} \big \|\nabla F(w_t) \big \|^2 \nonumber \\
    \overset{(iv)}{=}&\min \Big \{\gamma, \frac{1}{4L_1(2\tau_1^2+1)\big(2C_1\big \|\nabla F(w_t) \big \|\big)^{\beta}}, \frac{1}{4L_1(2\tau_1^2+1)(2C_2)^{\beta}} \Big\} \big \|\nabla F(w_t) \big \|^2 \nonumber \\
    \overset{(v)}{=}& \min \Big \{\gamma, \frac{1}{4L_1(2\tau_1^2+1)\big(2C_1\big \|\nabla F(w_t) \big \|\big)^{\beta}} \Big\} \big \|\nabla F(w_t) \big \|^2 \nonumber \\
    \overset{}{=}& \min \Big \{ \gamma \big \|\nabla F(w_t) \big \|^2 ,\frac{\big \|\nabla F(w_t) \big \|^{2-\beta}}{4(2C_1)^{\beta} L_1(2\tau_1^2+1)}\Big \}\nonumber\\
    \overset{(vi)}{\geq}& \min\{\gamma (L_1\gamma(2\tau_1^2+1))^{\beta/2}\|\nabla F(w_t) \|^{2-\beta}-\gamma^2L_1(2\tau_1^2+1),\frac{\big \|\nabla F(w_t) \big \|^{2-\beta}}{4(2C_1)^{\beta} L_1(2\tau_1^2+1)}\} \nonumber\\
    \overset{(vii)}\geq& \min\{\gamma^{{3}/{2}} (L_1(2\tau_1^2+1))^{1/2}\|\nabla F(w_t) \|^{2-\beta}-\gamma^2L_1(2\tau_1^2+1),\frac{\big \|\nabla F(w_t) \big \|^{2-\beta}}{4(2C_1)^{\beta} L_1(2\tau_1^2+1)}\} \nonumber\\
    \overset{(viii)}{\geq}& \min\{\gamma^{{3}/{2}} (L_1(2\tau_1^2+1))^{1/2}\|\nabla F(w_t) \|^{2-\beta},\frac{\big \|\nabla F(w_t) \big \|^{2-\beta}}{4(2C_1)^{\beta} L_1(2\tau_1^2+1)}\}-\gamma^2L_1(2\tau_1^2+1)\nonumber \\
    \overset{(x)}{\geq}&\gamma^{{3}/{2}} (L_1(2\tau_1^2+1))^{1/2}\|\nabla F(w_t) \|^{2-\beta}-\gamma^2L_1(2\tau_1^2+1)
    \nonumber\\
    \overset{(ix)}{\geq}&\gamma^{{3}/{2}} (L_1(2\tau_1^2+1))^{1/2}(2\mu \Delta_t)^{(2-\beta)/\rho}-\gamma^2L_1(2\tau_1^2+1)
\end{align}
where (i)-(v) follows the same arguments stated in Proof \ref{Appendix H}. (vi) applies \eqref{Technical Lemma's inequality} by setting $C=(L_1\gamma(2\tau_1^2+1)^{\beta/2})$ (since $L_1\gamma(2\tau_1^2+1)<1$), $x=\|\nabla F(w_t) \|$, $\Delta=\beta$, $\omega' = 2,\omega = 2-\beta$; (vii) follows from $(L_1\gamma(2\tau_1^2+1))^{\beta/2}\geq (L_1\gamma(2\tau_1^2+1))^{1/2}$ as $(L_1\gamma(2\tau_1^2+1))<1$; (viii) follows from fact for any $b\geq 0$, $\min \{a-b,c\}\geq \min\{a,c\}-b$;(x) follows from fact $\gamma \leq \frac{1}{4L_1((2\tau_1+2)/(1-\tau_1))(2\tau_1^2+1)}\leq \frac{1}{4L_1(2C_1)^{\beta}(2\tau_1^2+1)}$. Thus, $L_1\gamma(2\tau_1^2+1)<1$ and $\gamma^{3/2}(L_1(2\tau_1^2+1))^{1/2}\leq \gamma\leq  \frac{1}{4(2C_1)^{\beta}(2\tau_1^2+1)}$ always holds; (ix) substitutes assumption \ref{assum2} to replace $\|\nabla F(w_t) \|$ by $(2\mu\Delta_t)^{1/\rho}$.

When $\alpha=0,\beta>0$, above argument still applies, next, we consider the case when $\alpha=\beta=0$, we have
\begin{align}
    \gamma \|\nabla F(w_t) \|^2\geq& \gamma(L_1\gamma(2\tau_1^2+1))^{1/2} \|\nabla F(w_t) \|^2 \nonumber\\
    \geq & \gamma (L_1\gamma(2\tau_1^2+1))^{1/2}\|\nabla F(w_t) \|^2 - \gamma^2 L_1(2\tau_1^2+1)\nonumber \\
    \geq &\gamma (L_1\gamma(2\tau_1^2+1))^{1/2}(2\mu\Delta_t)^{2/\rho} - \gamma^2 L_1(2\tau_1^2+1)\nonumber\\
    = & \gamma^{3/2}(L_1(2\tau_1^2+1))(2\mu\Delta_t)^{2/\rho}-\gamma^2 L_1(2\tau_1^2+1),
\end{align}
where the first inequality is due to $\gamma\leq \frac{1}{L_1(2\tau_1^2+1)}$ and the last inequality substitutes assumption \ref{assum2} to replace $\|\nabla F(w_t) \|$ by $(2\mu\Delta_t)^{1/\rho}$.

Combining two cases, we conclude that for any $\beta\geq 0$, we always have
\begin{align}
    \gamma\frac{ \big \| \nabla F(w_t) \big \|^2}{h_t^{\beta}}\geq \gamma^{3/2}(L_1(2\tau_1^2+1))(2\mu\Delta_t)^{(2-\beta)/\rho}-\gamma^2 L_1(2\tau_1^2+1)
    \label{LHS stochastic PL}
\end{align}

Substituting \eqref{LHS stochastic PL} into \eqref{for telescope sum} and re-arranging terms, we have 
\begin{align}
       &\mathbb{E}_{w_t}\Big[\frac{\gamma^{{3}/{2}} (L_1(2\tau_1^2+1))^{1/2}(2\mu \Delta_t)^{(2-\beta)/\rho}}{4}\Big]\nonumber\\ \leq& \mathbb{E}_{w_t}\Big[\big[F(w_t) - F(w_{t+1})\big]\Big] + \frac{1}{2}(L_0+L_1 )\gamma^2(1+4\tau_2^2)^2+\frac{\gamma^2L_1(2\tau_1^2+1)}{4}.\nonumber\\
       =& \mathbb{E}_{w_t}\Big[\big[F(w_t) - F(w_{t+1})\big]\Big] + \frac{1}{2}\gamma^2\big((L_0+L_1 )(1+4\tau_2^2)^2+\frac{L_1(2\tau_1^2+1)}{2}\big)
\end{align}
Substituting $F^*$ on both sides and re-arranging terms yields that
\begin{align}
    \mathbb{E}_{w_{t+1}}[\Delta_{t+1}]\leq \mathbb{E}_{w_t}[\Delta_t-\frac{\gamma^{3/2}(L_1(2\tau_1^2+1))^{1/2}(2\mu\Delta_t)^{(2-\beta)/\rho}}{4}+\frac{1}{2}\gamma^2\big((L_0+L_1 )(1+4\tau_2^2)^2+{L_1(\tau_1^2+1/2)})].\nonumber
\end{align}
Since $\gamma < \frac{L_1(2\tau_1^2+1)(2\mu\epsilon)^{(4-2\beta)/{\rho}}}{16((L_0+L_1)(1+4\tau_2^2)+L_1(\tau_1^2+1/2))^2}$, the above inequality further implies
\begin{align}
    \mathbb{E}_{w_{t+1}}[\Delta_{t+1}]\leq \mathbb{E}_{w_t}[\Delta_t - \frac{\gamma^{3/2}(L_1(2\tau_1^2+1))^{1/2}(2\mu\Delta_t)^{(2-\beta)/\rho}}{4}+ \frac{\gamma^{3/2}(L_1(2\tau_1^2+1))^{1/2}(2\mu\epsilon)^{(2-\beta)/\rho}}{8}],
    \label{descent PL2 stochastic}
\end{align}
which gives the desired result.
\end{proof}

\end{document}